\documentclass[12pt]{article}
\usepackage[utf8]{inputenc}
\usepackage{amsmath}
\usepackage{amssymb}
\usepackage{mathrsfs}
\usepackage{amsthm}
\usepackage{cite}
\usepackage{hyperref}
\usepackage{enumerate}
\usepackage{combelow}
\usepackage{babel}
\usepackage[left=0.7in,right=0.7in,top=0.7in,bottom=0.7in]{geometry}
\usepackage{titlesec}
\titleformat*{\section}{\large\bfseries}
\newtheorem{theorem}{Theorem}[section]
\newtheorem{lemma}[theorem]{Lemma}
\newtheorem{corollary}[theorem]{Corollary}
\newtheorem{definition}[theorem]{Definition}

\numberwithin{equation}{section}
\title{Best proximity point results of operator type condensing mappings via measure  of noncompactness and application to an infinite system of differential equations in $c_0$ and $\ell_1$}
\author{\large  Parveen Kumar$^1$ and Ankit Kumar$^2$\footnote{Corresponding author}\\ 
{\small $^1,^2$Department of Mathematics, Punjab Engineering College (Deemed to be University)}\\
{\small Chandigarh-160012, India}\\ 
{\small E-mail: $^1$parvveenbeniwal11@gmail.com, $^2$ankitkumar@pec.edu.in}}
\date{}
\begin{document}
\maketitle
\begin{abstract}
We propose a new class of cyclic (noncyclic) operator type  condensing mappings and establish conditions that guarantee the existence of best proximity points (pairs) using measure of noncom-pactness. The  results obtained are further applied to investigate the optimum solution of an infinite system of differential equations in $c_0$ and $\ell_1$.\\
\textbf{2020 Mathematics Subject Classification.} Primary 47H10, 47H08; Secondary: 47H09, 34A34.\\
\textbf{Keywords.} Best proximity point (pair), measure of noncompactness, cyclic (noncyclic) operator type condensing mapping, infinite system of differential equations.
\end{abstract}
\section{Introduction}
Consider two nonempty subsets $\Delta$ and $\Omega$ of a Banach space $\mathfrak{X}$. Define 
\begin{align*}
dist(\Delta,\Omega)&=\inf\{\Vert \varrho-\varsigma \Vert :\varrho \in \Delta \mbox{ and } \varsigma \in \Omega\},\\
\Delta_0&=\{\varrho\in \Delta:\Vert \varrho-\varsigma\Vert=dist(\Delta,\Omega) \mbox{ for some } \varsigma \in \Omega\},\\
\Omega_0&=\{\varsigma\in \Omega:\Vert \varrho-\varsigma\Vert =dist(\Delta,\Omega) \mbox{ for some } \varrho \in \Delta\}.
\end{align*}
A pair $(\Delta,\Omega)$ possesses a property whenever both $\Delta$ and $\Omega$ possess that property. Denote by $\overline{\Delta}$, the closure of the set $\Delta$ and $\overline{\operatorname{con}}(\Delta)$, the closed convex hull of the set $\Delta$. Whenever $(\Delta,\Omega)$ is a  convex, nonempty and weakly compact pair in a Banach space $\mathfrak{X}$, then the pair $(\Delta_0,\Omega_0)$ also share these properties.
\begin{definition}
\emph{A nonempty pair $(\Delta,\Omega)$ in a normed linear space $\mathfrak{X}$ is termed proximinal whenever $\Delta_0=\Delta$ and $\Omega_0=\Omega$. }
\end{definition}
\begin{lemma}
\cite{1} Let $\Delta$ be a compact and nonempty subset of a Banach space $\mathfrak{X}$. Then $\overline{\operatorname{con}}(\Delta)$ is compact. 
\end{lemma}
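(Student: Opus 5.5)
The plan is to prove the classical Mazur theorem directly. Since $\mathfrak{X}$ is complete, a closed subset is compact if and only if it is totally bounded. So it suffices to show that the convex hull $\operatorname{con}(\Delta)$ is totally bounded. Its closure $\overline{\operatorname{con}}(\Delta)$ is then totally bounded and closed in a complete space, hence compact. Here I use that the closure of a convex set is convex, so $\overline{\operatorname{con}}(\Delta)=\overline{\operatorname{con}(\Delta)}$, and that the closure of a totally bounded set is totally bounded.

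The key step runs as follows. Fix $\varepsilon>0$.
\begin{enumerate}[(i)]
\item Since $\Delta$ is compact, choose a finite $\varepsilon/2$-net $\{\varrho_1,\dots,\varrho_n\}\subset\Delta$. Every $\varrho\in\Delta$ then satisfies $\Vert \varrho-\varrho_i\Vert<\varepsilon/2$ for some $i$.
\item Put $K=\operatorname{con}\{\varrho_1,\dots,\varrho_n\}$. This is the image of the standard simplex $S_n=\{\lambda\in\mathbb{R}^n:\lambda_i\ge 0,\ \sum\lambda_i=1\}$ under the continuous map $\lambda\mapsto\sum_i\lambda_i\varrho_i$. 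As $S_n$ is compact in $\mathbb{R}^n$, $K$ is compact and so admits a finite $\varepsilon/2$-net $\{\varsigma_1,\dots,\varsigma_m\}$.
\item Take $z=\sum_{j=1}^k t_j x_j\in\operatorname{con}(\Delta)$ with $x_j\in\Delta$, $t_j\ge0$ and $\sum t_j=1$. Pick indices $i(j)$ with $\Vert x_j-\varrho_{i(j)}\Vert<\varepsilon/2$. Then $w=\sum_j t_j\varrho_{i(j)}\in K$, and the triangle inequality gives $\Vert z-w\Vert\le\sum_j t_j\Vert x_j-\varrho_{i(j)}\Vert<\varepsilon/2$.
\item Finally choose $\varsigma_l$ with $\Vert w-\varsigma_l\Vert<\varepsilon/2$, so that $\Vert z-\varsigma_l\Vert<\varepsilon$.
\end{enumerate}
Thus $\{\varsigma_1,\dots,\varsigma_m\}$ is a finite $\varepsilon$-net for $\operatorname{con}(\Delta)$.

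The main obstacle is step (ii): convex combinations over arbitrarily many points of $\Delta$ must be controlled by a single finite set. The reduction to the hull of finitely many points, which is a continuous image of a finite-dimensional simplex, is exactly what handles this. Completeness of $\mathfrak{X}$ is used only in the last step, to pass from totally bounded to compact. This is why the statement requires a Banach space; in an incomplete normed space the closed convex hull need not be compact.
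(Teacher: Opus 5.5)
Your proof is correct and complete. You reduce the claim to total boundedness of $\operatorname{con}(\Delta)$, replace each point of an arbitrary convex combination by a nearby point of a finite $\varepsilon/2$-net, and then cover the compact polytope $\operatorname{con}\{\varrho_1,\dots,\varrho_n\}$ by finitely many balls; this is the standard proof of Mazur's theorem, and completeness is indeed needed only at the final step. The paper gives no proof of this lemma, only a citation of reference [1], so there is no competing argument to compare against. Your write-up supplies the self-contained argument that the paper treats as a black box.
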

The following classical result was established by Schauder in 1930:
\begin{theorem}
\cite{2} Let $\mathfrak{X}$ be a Banach space containing  a nonempty, convex and compact subset $\Delta$. Suppose that  $\daleth:\Delta \rightarrow \Delta$ is a continuous mapping. Then $\daleth$ admits a fixed point.
\end{theorem}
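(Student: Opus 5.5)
The plan is the classical one: reduce to Brouwer's fixed point theorem by approximating $\daleth$ with maps of finite-dimensional range. Since $\Delta$ is compact, fix $\varepsilon>0$ and choose a finite $\varepsilon$-net $\{\varrho_1,\dots,\varrho_n\}\subset\Delta$. Define the continuous functions $\mu_i(\varrho)=\max\{0,\varepsilon-\Vert \varrho-\varrho_i\Vert\}$. Their sum never vanishes on $\Delta$, because every point of $\Delta$ lies within $\varepsilon$ of some $\varrho_i$. Now set
\begin{align*}
P_\varepsilon(\varrho)=\frac{\sum_{i=1}^n \mu_i(\varrho)\varrho_i}{\sum_{i=1}^n \mu_i(\varrho)}.
\end{align*}
This Schauder projection is continuous and takes values in $\Delta_\varepsilon:=\operatorname{con}\{\varrho_1,\dots,\varrho_n\}\subset\Delta$, using the convexity of $\Delta$. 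Moreover, $\Vert P_\varepsilon(\varrho)-\varrho\Vert<\varepsilon$ for every $\varrho\in\Delta$, since $P_\varepsilon(\varrho)$ is a convex combination of the $\varrho_i$ with $\Vert\varrho-\varrho_i\Vert<\varepsilon$.

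Next, consider $\daleth_\varepsilon=P_\varepsilon\circ\daleth$ restricted to $\Delta_\varepsilon$. It maps $\Delta_\varepsilon$ continuously into itself. The set $\Delta_\varepsilon$ is a compact convex subset of the finite-dimensional space $\operatorname{span}\{\varrho_1,\dots,\varrho_n\}$, and is therefore homeomorphic to a closed ball of some dimension (or is a point). Brouwer's theorem then gives $\varrho_\varepsilon\in\Delta_\varepsilon$ with $P_\varepsilon(\daleth\varrho_\varepsilon)=\varrho_\varepsilon$. Consequently $\Vert\daleth\varrho_\varepsilon-\varrho_\varepsilon\Vert<\varepsilon$.

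Finally, take $\varepsilon=1/k$. By compactness of $\Delta$, a subsequence $\varrho_{1/k_j}$ converges to some $\varrho^*\in\Delta$. Continuity of $\daleth$ gives $\Vert\daleth\varrho^*-\varrho^*\Vert=\lim\Vert\daleth\varrho_{1/k_j}-\varrho_{1/k_j}\Vert=0$, so $\varrho^*$ is a fixed point.

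The main obstacle is the finite-dimensional step: it requires Brouwer's theorem, which is not stated in the paper, together with the standard fact that a compact convex set in $\mathbb{R}^m$ with nonempty relative interior is homeomorphic to a closed ball (via the Minkowski gauge), or alternatively is a retract of a ball. I would simply invoke these as known results. Lemma 1 is not needed here, since the convex hull of finitely many points is automatically compact.
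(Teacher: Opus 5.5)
The paper gives no proof of this statement. It is quoted from the literature as Schauder's 1930 theorem and serves only as background; none of the later arguments invoke it directly. So there is no in-paper argument to compare against.

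Your proof is the standard Schauder-projection argument, and it is correct:
\begin{itemize}
\item $P_\varepsilon$ is continuous with values in $\operatorname{con}\{\varrho_1,\dots,\varrho_n\}\subseteq\Delta$, using the convexity of $\Delta$.
\item The bound $\Vert P_\varepsilon(\varrho)-\varrho\Vert<\varepsilon$ holds because only indices with $\Vert\varrho-\varrho_i\Vert<\varepsilon$ receive positive weight.
\item Brouwer's theorem applied to $P_\varepsilon\circ\daleth$ on $\Delta_\varepsilon$ gives points with $\Vert\daleth\varrho_\varepsilon-\varrho_\varepsilon\Vert<\varepsilon$.
\item Sequential compactness of $\Delta$ and continuity of $\daleth$ then produce the fixed point.
\end{itemize}

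Two small points are worth making explicit. First, choose the net with strict inequality, as a finite subcover of the open balls $\{\xi:\Vert\xi-\varrho\Vert<\varepsilon\}$, $\varrho\in\Delta$. With only $\le\varepsilon$, the denominator $\sum_i\mu_i$ could vanish at a point lying at distance exactly $\varepsilon$ from every $\varrho_i$. Your later use of $\Vert\varrho-\varrho_i\Vert<\varepsilon$ shows you intend this. Second, in the finite-dimensional step, all norms on $\operatorname{span}\{\varrho_1,\dots,\varrho_n\}$ are equivalent. Hence the Euclidean form of Brouwer's theorem applies, whether through the gauge homeomorphism of $\Delta_\varepsilon$ with a ball in its affine hull or through a retraction of a ball onto $\Delta_\varepsilon$.

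Treating Brouwer as a black box is at the same level of reliance on outside results as the paper's own bare citation.
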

\begin{definition}
\emph{Let $\Omega$ and $\Delta$ be normed linear spaces with $\Gamma \subseteq \Delta$. An operator $\daleth:\Gamma \to \Omega$ is termed a compact operator if $\daleth$ carries bounded sets into relatively compact sets and $\daleth$ is continuous.}
\end{definition}
We now recall a classical generalization of Schauder’s fixed point theorem.
\begin{theorem}
\cite{2} Let $\mathfrak{X}$ be a Banach space containing a  convex, nonempty, bounded and closed subset $\Delta$. Suppose that  $\daleth:\Delta \rightarrow \Delta$ is a compact operator. Then $\daleth$ admits a fixed point.
\end{theorem}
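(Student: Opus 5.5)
The plan is to reduce the statement to Schauder's theorem (the first theorem cited from \cite{2}) by building a compact convex set that $\daleth$ maps into itself. Put $K=\overline{\daleth(\Delta)}$. By the definition of a compact operator, $\daleth$ carries the bounded set $\Delta$ to a relatively compact set. Since $\mathfrak{X}$ is a Banach space, this makes $K$ compact. It is also nonempty, because $\Delta\neq\emptyset$. Since $\Delta$ is closed and $\daleth(\Delta)\subseteq\Delta$, we get $K\subseteq\Delta$.

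Next set $C=\overline{\operatorname{con}}(K)$. The earlier lemma on closed convex hulls (cited from \cite{1}) says that $C$ is compact, and $C$ is convex and nonempty by construction. Because $\Delta$ is closed and convex and contains $K$, it contains the closed convex hull of $K$, so $C\subseteq\Delta$. For invariance, observe that $\daleth(C)\subseteq\daleth(\Delta)\subseteq K\subseteq C$. Hence the restriction $\daleth|_C:C\to C$ is a continuous self-map of a nonempty, convex, compact subset of $\mathfrak{X}$.

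Schauder's theorem now gives a point $x\in C$ with $\daleth x=x$. This $x$ lies in $\Delta$, so it is the required fixed point of $\daleth$.

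There is no serious obstacle here. The only step needing care is checking $C\subseteq\Delta$ and $\daleth(C)\subseteq C$. These use the closedness and convexity of $\Delta$ and the fact that $\daleth(\Delta)$ lies inside its own closure. Boundedness of $\Delta$ is used only to guarantee that $\daleth(\Delta)$ is relatively compact. Completeness is used to pass from relative compactness to compactness of the closure.
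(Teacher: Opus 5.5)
Your argument is correct. The paper gives no proof of this statement: it is quoted from \cite{2} as a known generalization of Schauder's theorem. So there is no competing route to compare against. What you give is the standard derivation, and it uses exactly the two results the paper records just before the statement. These are the lemma that $\overline{\operatorname{con}}$ of a compact set is compact, and Schauder's theorem for a continuous self-map of a nonempty, convex, compact set. Your checks $K\subseteq\Delta$, $C=\overline{\operatorname{con}}(K)\subseteq\Delta$ and $\daleth(C)\subseteq\daleth(\Delta)\subseteq K\subseteq C$ are all right.

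One small correction to your closing commentary concerns where completeness enters. It is not what makes $K=\overline{\daleth(\Delta)}$ compact, since relative compactness means by definition that the closure is compact. Completeness is genuinely needed inside the closed-convex-hull lemma, which can fail in incomplete normed spaces. That is the step of your proof that actually depends on $\mathfrak{X}$ being a Banach space.
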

Kuratowski \cite{3} made the initial contribution to the development of the concept of measure of noncompactness. Let $\Pi$ be the collection of all  subsets of a metric space $(\mathfrak{X},d)$ which are bounded and nonempty. 
\begin{definition}
\emph{\cite{3} Define the function $\pi: \Pi \rightarrow [0,\infty)$ defined by $$\pi(\Delta) = \inf\{\delta > 0 : \Delta \text{ admits a finite cover by sets of diameter} \leq \delta\} \text{ for all } \Delta \in \Pi.$$  This function $\pi$ is termed the Kuratowski measure of noncompactness.}
\end{definition}
Gohberg et al. \cite{4} extended the notion of Kuratowski measure of noncompactness.
\begin{definition}
\emph{Define the function $\Upsilon : \Pi \to [0,\infty)$ defined by $$\Upsilon(\Delta) = \inf\{\delta > 0 : \Delta \text{ admits a finite cover by finitely many balls with radii } \leq \delta\} \text{ for all }\Delta \in \Pi.$$ This function $\Upsilon$ is termed the Hausdorff measure of noncompactness.}
\end{definition}
\begin{definition}
\emph{A function $\Xi:\Theta \rightarrow [0,\infty)$ is called a measure of noncompactness (MNC) if}

\emph{(i) $\Xi(\Delta) = \Xi(\overline{\Delta})$ for all $\Delta \in \Theta$,}

\emph{(ii)  $\Delta$ is relatively compact if and only if $\Xi(\Delta) = 0$,}

\emph{(iii) $\Xi(\Delta \cup \Omega) = \max\{\Xi(\Delta), \Xi(\Omega)\}$ for all $\Delta,\Omega \in \Theta$.}
\end{definition}
Also, we have:

(i) If $\Delta \subseteq \Omega$, then $\Xi(\Delta) \leq \Xi(\Omega)$,

(ii) If $\Delta$ is a finite set, then $\Xi(\Delta) = 0$,

(iii) $\Xi(\Delta \cap \Omega) \le \min\{\Xi(\Delta), \Xi(\Omega)\}$ for all $\Delta, \Omega \in \Theta$,

(iv) Whenever $\{\Delta_n\}$ is a decreasing sequence of  bounded, nonempty and closed subsets of $\mathfrak{X}$ for which $\lim\limits_{n \to \infty} \Xi(\Delta_n) = 0$, then $\Delta_{\infty}= \bigcap\limits_{n \geq 1} \Delta_n$ is compact and nonempty.

Furthermore, assuming that $\mathfrak{X}$ is a Banach space, then

(i) $\Xi(\Delta + \Omega) \leq \Xi(\Delta) + \Xi(\Omega)$ for all $\Delta,\Omega \in \Theta$,

(ii) For any number $\alpha$ and any $\Delta \in \Theta$, $\Xi(\alpha \Delta) = \vert \alpha \vert \Xi(\Delta)$,

(iii) $\Xi(\operatorname{\overline{con}}(\Delta)) = \Xi(\Delta)$ for all $\Delta \in \Theta$.\\
A thorough exposition of measure of noncompactness is presented in \cite{5}. Building on this framework, Darbo \cite{6} extended Schauder’s fixed point theorem to the to the broader class of condensing operators.
\begin{theorem}
\cite{6} Let $\mathfrak{X}$ be a Banach space equipped with a measure of noncompactness $\Upsilon$ containing a  convex, nonempty, bounded and closed subset $\Delta$. Suppose that $\daleth : \Delta \rightarrow \Delta$ is a continuous mapping satisfying $\Xi(\daleth(\Gamma)) \leq \eta \Xi(\Gamma)$ for some $\eta \in [0,1)$ and for all  bounded and nonempty subsets $\Gamma$ of $\Delta$. Then $\daleth$ admits a fixed point.
\end{theorem}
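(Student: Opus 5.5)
We would prove Darbo's theorem by the classical iteration of closed convex hulls, which reduces it to Schauder's theorem for a compact convex set (the second theorem quoted above). Throughout, $\Xi$ denotes the measure of noncompactness in the statement, and we use only properties (i)--(iii) of the definition and the listed consequences.

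\textbf{Constructing the nested sets.} Put $\Delta_1=\Delta$ and define recursively
\[
\Delta_{n+1}=\overline{\operatorname{con}}\bigl(\daleth(\Delta_n)\bigr), \qquad n\ge 1 .
\]
Each $\Delta_n$ is nonempty, closed, convex and bounded. Since $\daleth(\Delta)\subseteq\Delta$ and $\Delta$ is closed and convex, we get $\Delta_2\subseteq\Delta_1$. If $\Delta_{n+1}\subseteq\Delta_n$, then $\daleth(\Delta_{n+1})\subseteq\daleth(\Delta_n)$, and hence $\Delta_{n+2}\subseteq\Delta_{n+1}$. So by induction the sequence is decreasing. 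It also follows by induction that $\daleth(\Delta_n)\subseteq\Delta_{n+1}\subseteq\Delta_n$, i.e.\ each $\Delta_n$ is invariant under $\daleth$.

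\textbf{Shrinking the measure.} Using the invariance of $\Xi$ under closed convex hulls (property (iii) in the Banach space list) together with the contraction hypothesis,
\[
\Xi(\Delta_{n+1})=\Xi\bigl(\daleth(\Delta_n)\bigr)\le \eta\,\Xi(\Delta_n).
\]
Iterating gives $\Xi(\Delta_n)\le \eta^{n-1}\Xi(\Delta)\to 0$, because $\eta<1$ and $\Xi(\Delta)<\infty$ ($\Delta$ is bounded). By property (iv), $\Delta_\infty=\bigcap_{n}\Delta_n$ is nonempty and compact. It is also convex, as an intersection of convex sets.

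\textbf{Applying Schauder.} For every $n$ we have $\daleth(\Delta_\infty)\subseteq\daleth(\Delta_n)\subseteq\Delta_{n+1}$, so $\daleth(\Delta_\infty)\subseteq\Delta_\infty$. Thus $\daleth$ restricts to a continuous self-map of a nonempty, convex, compact set, and Schauder's theorem gives a fixed point.

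The only delicate step is the nonemptiness and compactness of $\Delta_\infty$, which is exactly property (iv), a Cantor-type intersection theorem. Since the paper states it as known, it can be invoked directly. The remaining steps are monotonicity bookkeeping.
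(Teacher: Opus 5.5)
The paper gives no proof of this statement (it is Darbo's theorem, quoted from the literature). Your argument is correct, and it follows the same scheme the paper itself uses in the proof of Theorem~\ref{theorem2.1}: iterate closed convex hulls, use $\Xi(\overline{\operatorname{con}}(\Gamma))=\Xi(\Gamma)$ to get $\Xi(\Delta_{n+1})\le\eta\,\Xi(\Delta_n)$, obtain a nonempty, compact, convex, $\daleth$-invariant $\Delta_\infty$ from property (iv), and finish with a compact-case theorem (Schauder here, Eldred et al.\ there). Keep in mind that convex-hull invariance, which your proof relies on, is an extra listed property (true for the Kuratowski and Hausdorff measures) and does not follow from axioms (i)--(iii), whereas property (iv) does.
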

A Banach space $\mathfrak{X}$ is called strictly convex if for all $\varrho, \varsigma, \zeta \in \mathfrak{X}$, $\varrho \neq \varsigma$ and $\beta>0$, we have $\Vert \varrho-\zeta \Vert \leq \beta$, $\Vert \varsigma-\zeta\Vert \leq \beta$  implies that $\left\Vert \frac{\varrho+\varsigma}{2}-\zeta \right\Vert <\beta$. 

A mapping $\daleth:\Delta \cup \Omega \rightarrow \Delta \cup \Omega$ is termed cyclic relatively nonexpansive mapping if $\daleth(\Delta)\subseteq \Omega$, $\daleth(\Omega)\subseteq \Delta$ and $\Vert \daleth\varrho-\daleth\varsigma\Vert \leq \Vert \varrho-\varsigma\Vert$ for all $(\varrho,\varsigma)\in \Delta\times \Omega$. We seek to identify sufficient conditions on the mapping $\daleth$ and $(\Delta,\Omega)$ guaranteeing the existence of a best proximity point of $\daleth$. A point $\xi \in \Delta \cup \Omega$ is called a best proximity point of $\daleth$ if $\Vert \xi-\daleth \xi\Vert=dist(\Delta,\Omega)$. A mapping $\daleth:\Delta \cup \Omega \rightarrow \Delta \cup \Omega$ is termed  noncyclic relatively nonexpansive mapping if $\daleth(\Delta)\subseteq \Delta$, $\daleth(\Omega)\subseteq \Omega$ and $\Vert \daleth \varrho-\daleth \varsigma\Vert \leq \Vert \varrho-\varsigma\Vert$ for all $(\varrho,\varsigma)\in \Delta\times \Omega$. A point $(\xi,\zeta) \in \Delta \cup \Omega$ is called a best proximity pair of $\daleth$ if $\daleth \xi=\xi$, $\daleth \zeta=\zeta$ and $\Vert \xi-\zeta \Vert=dist(\Delta,\Omega)$. 

The following results, established by Eldred et al. \cite{11}, will be  instrumental in deriving the results of this paper:
\begin{theorem}\label{theorem1}
\cite{11} Let $\mathfrak{X}$ be a Banach space containing  a  compact, convex and nonempty pair $(\Delta,\Omega)$. Suppose that $ \daleth:\Delta \cup \Omega \rightarrow \Delta \cup \Omega$ is a cyclic relatively nonexpansive mapping. Then $\daleth$ possesses a best proximity point.
\end{theorem}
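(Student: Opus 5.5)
Since $(\Delta,\Omega)$ is compact, convex and nonempty, the first step is to pass to the proximal pair $(\Delta_0,\Omega_0)$. Compactness gives $(\Delta_0,\Omega_0)$ nonempty: a minimizing sequence $(\varrho_n,\varsigma_n)$ for $dist(\Delta,\Omega)$ has a convergent subsequence, and its limit realizes the distance. Moreover $\Delta_0,\Omega_0$ are convex, since midpoints of two pairs attaining the distance again attain it, and they are closed, hence compact. Relative nonexpansiveness gives $\daleth(\Delta_0)\subseteq\Omega_0$ and $\daleth(\Omega_0)\subseteq\Delta_0$. Indeed, if $\Vert\varrho-\varsigma\Vert=dist(\Delta,\Omega)$ with $\varrho\in\Delta$, $\varsigma\in\Omega$, then $\Vert\daleth\varrho-\daleth\varsigma\Vert\le dist(\Delta,\Omega)$, with $\daleth\varrho\in\Omega$ and $\daleth\varsigma\in\Delta$, so equality holds. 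So I may replace $(\Delta,\Omega)$ by $(\Delta_0,\Omega_0)$.

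Next I use Zorn's lemma on the family $\mathcal F$ of nonempty, closed, convex pairs $(K_1,K_2)\subseteq(\Delta_0,\Omega_0)$ with $\daleth(K_1)\subseteq K_2$, $\daleth(K_2)\subseteq K_1$ and $dist(K_1,K_2)=dist(\Delta,\Omega)$, ordered by reverse inclusion. For a chain, compactness gives nonempty intersections. Distance is preserved because the set of pairs realizing the distance in each member of the chain is a compact nonempty set, and these sets are nested, so their intersection is nonempty. Hence there is a minimal pair $(K_1,K_2)$.

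The key step is to show minimality forces $\Vert\varrho-\daleth\varrho\Vert=dist(\Delta,\Omega)$ for some $\varrho\in K_1$. Set $L_1=\overline{\operatorname{con}}(\daleth(K_2))\subseteq K_1$ and $L_2=\overline{\operatorname{con}}(\daleth(K_1))\subseteq K_2$. Then $\daleth(L_1)\subseteq\daleth(K_1)\subseteq L_2$ and similarly $\daleth(L_2)\subseteq L_1$. Also $dist(L_1,L_2)=dist(\Delta,\Omega)$, because for a proximal pair $(x,y)$ in $K_1\times K_2$ the pair $(\daleth y,\daleth x)$ is also proximal. By minimality, $L_1=K_1$ and $L_2=K_2$.

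Then I take a proximal pair $(x,y)\in K_1\times K_2$ and run the Eldred–Kirk–Veeramani argument. Suppose no point of $K_1\cup K_2$ is a best proximity point, i.e. $\Vert z-\daleth z\Vert>d$ for all such $z$, where $d=dist(\Delta,\Omega)$. By compactness I can use the Chebyshev-radius style argument: let $r=\sup_{z\in K_1}\Vert z-\daleth z\Vert$ and consider the sets
\[
C_1=\{u\in K_1:\ \Vert u-v\Vert\le r' \text{ for all } v\in K_2\}
\]
and the analogous $C_2$, for a suitable radius $r'$. One shows these are nonempty, convex, closed and $\daleth$-invariant in the cyclic sense, using $K_i=\overline{\operatorname{con}}(\daleth(K_j))$ and nonexpansiveness on $\Delta\times\Omega$. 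This contradicts minimality unless $\delta(K_1,K_2)=d$. In that case every pair is proximal, and in particular $\Vert x-\daleth x\Vert=d$.

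The main obstacle is this last step: producing a strictly smaller invariant pair. It needs a normal-structure type property, which compact convex sets possess, via the existence of a nondiametral point, to guarantee that the Chebyshev-center sets are proper.
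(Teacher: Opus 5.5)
Your Steps 1--3 are correct. Your architecture (pass to $(\Delta_0,\Omega_0)$, take a Zorn-minimal cyclic pair, show $K_1=\overline{\operatorname{con}}(\daleth(K_2))$ and $K_2=\overline{\operatorname{con}}(\daleth(K_1))$, then shrink by Chebyshev-type sets) is that of the Eldred--Kirk--Veeramani proof, which the paper only cites. The gap is the decisive last step: you leave it as a sketch, and that sketch omits two necessary ingredients.

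\emph{The distance condition.} To contradict minimality you need $(C_1,C_2)\in\mathcal F$, and membership requires $dist(C_1,C_2)=d$ as well as nonemptiness. You never address this, and nonemptiness does not give it: nothing in your sketch supplies a point of $C_1$ with a $d$-partner in $C_2$.

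\emph{Nondiametral points.} You need $x_1\in K_1$ with $\delta(x_1,K_2):=\sup_{v\in K_2}\Vert x_1-v\Vert<\delta(K_1,K_2)$, and symmetrically a point of $K_2$. These do not follow from normal structure of compact convex sets. For a compact convex pair that is not proximinal they may fail to exist: take $K_1=\{p\}$ and $K_2$ a segment $[q_1,q_2]$ whose point nearest to $p$ is $q_1$, with $\Vert p-q_1\Vert<\Vert p-q_2\Vert$. Your family $\mathcal F$ does not require proximinality, and you do not prove that the minimal pair is proximinal.

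Both points can be repaired as follows.

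(i) Apply minimality to $((K_1)_0,(K_2)_0)$. This pair is nonempty, compact, convex, $\daleth$-cyclic (by your Step 1 argument) and at distance $d$. Hence $(K_1)_0=K_1$ and $(K_2)_0=K_2$.

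(ii) Let $\delta=\delta(K_1,K_2)$ and suppose $\delta>d$. Assume every point of $K_1$ were diametral. Pick $z_1\in K_1$ and inductively choose $w_n\in K_2$ with $\Vert\frac1n\sum_{i\le n}z_i-w_n\Vert=\delta$; this is possible since the average lies in $K_1$ and $K_2$ is compact, and it forces $\Vert z_i-w_n\Vert=\delta$ for $i\le n$. By proximinality choose $z_{n+1}\in K_1$ with $\Vert z_{n+1}-w_n\Vert=d$. Then $\Vert z_i-z_{n+1}\Vert\ge\delta-d$ for all $i\le n$, contradicting compactness of $K_1$. So $K_1$ has a nondiametral point $x_1$, and symmetrically $K_2$ has one, $x_2$.

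(iii) Choose $y_2\in K_2$ and $y_1\in K_1$ with $\Vert x_1-y_2\Vert=\Vert y_1-x_2\Vert=d$, and set $u_1=\frac{x_1+y_1}{2}$, $u_2=\frac{x_2+y_2}{2}$. Then $\Vert u_1-u_2\Vert=d$, while $\delta(u_1,K_2)\le\frac12(\delta(x_1,K_2)+\delta)<\delta$ and likewise $\delta(u_2,K_1)<\delta$. Put $r'=\max\{\delta(u_1,K_2),\delta(u_2,K_1)\}$. Your invariance argument then gives $(C_1,C_2)\in\mathcal F$, and $C_1\ne K_1$, since otherwise $\delta\le r'$.

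This contradiction yields $\delta=d$, so $\Vert x-\daleth x\Vert=d$ for every $x\in K_1$. The quantity $r=\sup_{z\in K_1}\Vert z-\daleth z\Vert$ and the assumption that no best proximity point exists play no role.
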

\begin{theorem}\label{theorem2}
\cite{11} Let $\mathfrak{X}$ be a strictly convex Banach space containing a compact, convex and nonempty pair $(\Delta,\Omega)$. Suppose that $\daleth:\Delta \cup \Omega \rightarrow \Delta \cup \Omega$ is a noncyclic relatively nonexpansive mapping. Then $\daleth$ possesses a best proximity pair.
\end{theorem}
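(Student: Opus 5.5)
The plan is to run the minimal invariant pair argument of Eldred, Kirk and Veeramani on the proximal pair $(\Delta_0,\Omega_0)$. The relatively nonexpansive hypothesis is used to make everything respect the proximal structure. Strict convexity is what turns a best proximity \emph{pair} into one produced by fixed points of $\daleth$.

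\emph{Step 1: the proximal pair.} By compactness, $dist(\Delta,\Omega)=:d$ is attained, so $(\Delta_0,\Omega_0)$ is nonempty. It is also convex and compact, being closed in a compact pair. If $\varrho\in\Delta_0$ with $\Vert\varrho-\varsigma\Vert=d$, then $\Vert\daleth\varrho-\daleth\varsigma\Vert\le d$, hence $\daleth\varrho\in\Delta_0$ and $\daleth\varsigma\in\Omega_0$. So $\daleth$ is noncyclic on $(\Delta_0,\Omega_0)$.

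Strict convexity gives uniqueness: for each $\varrho\in\Delta_0$ there is exactly one $P\varrho\in\Omega_0$ with $\Vert\varrho-P\varrho\Vert=d$. Indeed, two such points would have a midpoint in $\Omega$ strictly closer than $d$. The same computation gives $\daleth P\varrho=P\daleth\varrho$.

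\emph{Step 2: a minimal invariant pair.} Consider the family $\mathcal F$ of pairs $(K_1,K_2)\subseteq(\Delta_0,\Omega_0)$ that are nonempty, closed and convex, satisfy $\daleth(K_i)\subseteq K_i$, and have $dist(K_1,K_2)=d$. Order $\mathcal F$ by inclusion. For a chain, compactness makes the intersections nonempty, and the distance $d$ is still attained in the limit. Zorn's lemma then yields a minimal element $(K_1,K_2)$.

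Minimality forces $K_1=\overline{\operatorname{con}}(\daleth K_1)$ and $K_2=\overline{\operatorname{con}}(\daleth K_2)$. To see this, note that $(\overline{\operatorname{con}}\daleth K_1,\overline{\operatorname{con}}\daleth K_2)$ is again in $\mathcal F$, using the images of proximal pairs together with the commutation $\daleth P=P\daleth$.

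\emph{Step 3: collapse to points (main obstacle).} I will show that $K_1$ is a singleton. Suppose not. Set $\delta(K_1,K_2)=\sup\{\Vert x-y\Vert : x\in K_1,\ y\in K_2\}$; if $\delta=d$, then $K_1$ is a singleton by strict convexity and uniqueness of $P$. Otherwise $\delta>d$, and I want to contradict minimality.

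Compact convex pairs have proximal normal structure. The argument: pick a finite $\varepsilon$-net of $K_2$ and average it, as in the proof that compact convex sets have normal structure. This yields $x_0\in K_1$ with $r:=\sup_{y\in K_2}\Vert x_0-y\Vert<\delta$, and likewise a point of $K_2$. Define
\[
L_1=\{x\in K_1:\ K_2\subseteq \overline{B}(x,r)\},\qquad L_2=\{y\in K_2:\ K_1\subseteq \overline{B}(y,r)\}.
\]
Invariance follows from $K_i=\overline{\operatorname{con}}(\daleth K_i)$ and relative nonexpansiveness. Since balls are closed and convex, $K_2\subseteq\overline{B}(\daleth x,r)$ whenever $x\in L_1$. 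Proximality is preserved via $P$, so $(L_1,L_2)\in\mathcal F$ is strictly smaller, contradicting minimality.

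I expect this step to be the delicate part: keeping the Chebyshev-centre pair both $\daleth$-invariant and proximal at the same time. If it resists, I would first try working directly with the relative Chebyshev centres and using $P$ to transport proximality.

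\emph{Conclusion.} Once $K_1=\{\xi\}$, we get $K_2=\{\zeta\}$ with $\zeta=P\xi$. Then $\daleth\xi=\xi$, $\daleth\zeta=\zeta$ and $\Vert\xi-\zeta\Vert=d$, so $(\xi,\zeta)$ is a best proximity pair of $\daleth$.
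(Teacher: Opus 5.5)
The paper gives no proof of this statement; it quotes it from Eldred, Kirk and Veeramani. Your outline is essentially their argument: a Zorn-minimal $\daleth$-invariant pair $(K_1,K_2)$ in $(\Delta_0,\Omega_0)$, then $K_i=\overline{\operatorname{con}}(\daleth K_i)$, proximal normal structure of compact convex pairs, and strict convexity to collapse to points. Steps 1 and 2, the invariance of $L_1,L_2$, and the final collapse are fine.

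\textbf{The gap.} It is where you expected it. You never establish $dist(L_1,L_2)=d$. Your justification, that $P$ carries $L_1$ into $L_2$, does not follow from anything available.

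In a strictly convex space $P$ is a translation: midpoints of proximal pairs are proximal, so strict convexity forces $Px-x=Px'-x'=:v$. Hence $K_2=K_1+v$ for a proximinal pair. Then $\sup_{y\in K_2}\Vert x-y\Vert=\sup_{u\in K_1}\Vert x-u-v\Vert$, whereas $\sup_{u\in K_1}\Vert Px-u\Vert=\sup_{u\in K_1}\Vert x-u+v\Vert$. These agree in Hilbert space but not in general.

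For a counterexample, work in $\ell_4^2$ with $v=(1,2)$ and $w=(8,-1)$, so that $\Vert v+sw\Vert>\Vert v\Vert$ for $s\neq 0$. Let $K_1=\{tw:0\le t\le 1\}$ and $K_2=K_1+v$. Then:
\begin{itemize}
\item the proximal pairs are exactly $(tw,tw+v)$;
\item the radius of $tw$ relative to $K_2$ is $f(t)=\max\{\Vert tw-v\Vert,\Vert(1-t)w+v\Vert\}$;
\item the radius of $tw+v$ relative to $K_1$ is $f(1-t)$.
\end{itemize}
Since $\Vert\tfrac{w}{2}-v\Vert^4=3^4+2.5^4<5^4+1.5^4=\Vert\tfrac{w}{2}+v\Vert^4$, the unique minimiser $t^*$ of $f$ satisfies $t^*>1/2$. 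So for $r$ slightly above $\min f$, the sets $L_1$ and $L_2$ are small neighbourhoods of $t^*w$ and $(1-t^*)w+v$, and they contain no proximal pair. The same example defeats your fallback of transporting relative Chebyshev centres by $P$.

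There are also two smaller slips. Averaging an $\varepsilon$-net of $K_2$ gives a point of $K_2$, not of $K_1$. And with $r$ built from $x_0$ alone, $L_2$ may be empty.

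\textbf{The fix.} The missing idea is to choose the two centres as a proximal pair.
\begin{enumerate}
\item The minimal pair is proximinal. By relative nonexpansiveness $((K_1)_0,(K_2)_0)\in\mathcal F$, so by minimality it equals $(K_1,K_2)$.
\item Write $\delta(x,K_2)=\sup_{y\in K_2}\Vert x-y\Vert$ and suppose $\delta>d$. Find $x_1\in K_1$ with $\delta(x_1,K_2)<\delta$ by averaging an $\varepsilon$-net of $K_1$ with $\varepsilon<\delta-d$. Proximinality puts every $y\in K_2$ within $d+\varepsilon$ of some net point, so the average has radius at most $\delta-(\delta-d-\varepsilon)/n$. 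Symmetrically, find $y_1\in K_2$ with $\delta(y_1,K_1)<\delta$.
\item Let $y_1'\in K_2$ and $x_1'\in K_1$ be the partners of $x_1$ and $y_1$. Put $p=\frac{x_1+x_1'}{2}$ and $q=\frac{y_1'+y_1}{2}$. Then $\Vert p-q\Vert=d$. By convexity of $x\mapsto\delta(x,K_2)$, and likewise for $K_1$, both $\delta(p,K_2)$ and $\delta(q,K_1)$ are $<\delta$.
\item Take $r=\max\{\delta(p,K_2),\delta(q,K_1)\}$. Then $p\in L_1$ and $q\in L_2$, hence $dist(L_1,L_2)=d$.
\item Your invariance argument applies, and $L_1\neq K_1$ because the supremum $\delta$ is attained on the compact set $K_1$. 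So $(L_1,L_2)\in\mathcal F$ is strictly smaller, contradicting minimality.
\end{enumerate}
This part uses no strict convexity. Strict convexity is needed only to pass from $\delta(K_1,K_2)=d$ to singletons.
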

In recent years, several authors have studied best proximity point results and explored their applications in diverse areas. Gabeleh and Markin \cite{7} developed the notion of cyclic (noncyclic) condensing operators and demonstrated the existence of best proximity points (best proximity pairs) by employing measure of noncompactness. 
\begin{theorem} 
\cite[Theorem 3.2]{7} Let $\mathfrak{X}$ be a Banach space containing a  closed, nonempty, convex and bounded pair $(\Delta,\Omega)$. Suppose that $\daleth:\Delta \cup \Omega \rightarrow \Delta \cup \Omega$ is a cyclic relatively nonexpansive mapping. Assume that $\Delta_0$ is nonempty and $\daleth$ is compact. Then $\daleth$ has a best proximity point.
\end{theorem}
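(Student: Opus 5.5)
We reduce to Theorem \ref{theorem1}, which needs a compact, convex, nonempty pair invariant under $\daleth$ on which $\daleth$ is still cyclic relatively nonexpansive. The pair $(\Delta_0,\Omega_0)$ is the natural first candidate. So we first show $(\Delta_0,\Omega_0)$ is nonempty, convex and $\daleth$-invariant in the cyclic sense, and then cut it down to a compact pair using the compactness of $\daleth$.

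\textbf{Invariance and convexity.} Since $\Delta_0\neq\emptyset$, there is $(\varrho,\varsigma)\in\Delta\times\Omega$ with $\Vert\varrho-\varsigma\Vert=dist(\Delta,\Omega)$, so $\Omega_0\neq\emptyset$ too. Convexity of $\Delta_0$ and $\Omega_0$ follows from convexity of $\Delta,\Omega$. If $\Vert\varrho-\varsigma\Vert=dist(\Delta,\Omega)$ and $\varrho'=t\varrho_1+(1-t)\varrho_2$, $\varsigma'=t\varsigma_1+(1-t)\varsigma_2$, then the triangle inequality gives $\Vert\varrho'-\varsigma'\Vert\le dist(\Delta,\Omega)$, hence equality. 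For invariance, take $\varrho\in\Delta_0$ with partner $\varsigma\in\Omega$. Relative nonexpansiveness gives $\Vert\daleth\varrho-\daleth\varsigma\Vert\le dist(\Delta,\Omega)$, with $\daleth\varrho\in\Omega$ and $\daleth\varsigma\in\Delta$. Hence $\daleth\varrho\in\Omega_0$, and symmetrically $\daleth(\Omega_0)\subseteq\Delta_0$.

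\textbf{Compactification.} Set $K_1=\overline{\operatorname{con}}(\daleth(\Omega_0))\subseteq\Delta$ and $K_2=\overline{\operatorname{con}}(\daleth(\Delta_0))\subseteq\Omega$. These inclusions hold because $\Delta,\Omega$ are closed and convex. Since $\daleth$ is compact and $\Delta_0,\Omega_0$ are bounded, $\daleth(\Delta_0)$ and $\daleth(\Omega_0)$ are relatively compact. By the lemma of \cite{1}, $K_1$ and $K_2$ are then compact. They are nonempty and convex.

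Two things remain: $\daleth(K_1)\subseteq K_2$ and $\daleth(K_2)\subseteq K_1$. This is the main obstacle, because $K_1$ may contain points outside $\Delta_0$. The first fix to try is to replace $(\Delta_0,\Omega_0)$ by a minimal-type invariant pair. Let $\mathcal F$ be the family of pairs $(E,F)$ with $E\subseteq\Delta$, $F\subseteq\Omega$ closed, convex and nonempty, $\daleth(E)\subseteq F$, $\daleth(F)\subseteq E$, and $dist(E,F)=dist(\Delta,\Omega)$. The pair $(\overline{\Delta_0},\overline{\Omega_0})$ lies in $\mathcal F$. Next consider $(E_1,F_1)=(\overline{\operatorname{con}}(\daleth(F)),\overline{\operatorname{con}}(\daleth(E)))$. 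We have $E_1\subseteq E$ and $F_1\subseteq F$, so $\daleth(E_1)\subseteq\daleth(E)\subseteq F_1$ and similarly $\daleth(F_1)\subseteq E_1$. This shrinking step therefore preserves invariance automatically. Applied once to $(\overline{\Delta_0},\overline{\Omega_0})$, it yields a compact, convex, nonempty, cyclically invariant pair. Compactness comes from the relative compactness of $\daleth(\overline{\Omega_0})$ and $\daleth(\overline{\Delta_0})$, which are images of bounded sets.

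\textbf{Conclusion.} The shrunken pair $(E_1,F_1)$ contains $\daleth\varsigma$ and $\daleth\varrho$ for a proximal pair $(\varrho,\varsigma)$, so $dist(E_1,F_1)=dist(\Delta,\Omega)$. Apply Theorem \ref{theorem1} to $\daleth$ restricted to $E_1\cup F_1$, which is still cyclic relatively nonexpansive. This gives $\xi$ with $\Vert\xi-\daleth\xi\Vert=dist(E_1,F_1)=dist(\Delta,\Omega)$, i.e.\ a best proximity point of $\daleth$.
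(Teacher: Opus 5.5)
Your argument is correct. The paper gives no proof of this statement; it is quoted from \cite{7}. The natural comparison is therefore with the paper's proof of Theorem \ref{theorem2.1}, which has the same skeleton: start from $(\Delta_0,\Omega_0)$, shrink by closed convex hulls of images, and finish with Theorem \ref{theorem1}. There only a measure-of-noncompactness inequality is available. So the paper has to iterate $\omega_n=\overline{\operatorname{con}}(\daleth(\omega_{n-1}))$, track proximinality and $dist(\omega_{2n},\Lambda_{2n})$ along the way, and reach compactness only in the limit $\bigcap_n\omega_{2n}$. In your setting, compactness of $\daleth$ together with Mazur's lemma gives a compact pair after a single step. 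Your observation that $E_1\subseteq E$ forces $\daleth(E_1)\subseteq\daleth(E)\subseteq F_1$ gives invariance for free, so none of that bookkeeping is needed.

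Two remarks. First, the assertion $(\overline{\Delta_0},\overline{\Omega_0})\in\mathcal F$ requires $\daleth(\overline{\Delta_0})\subseteq\overline{\Omega_0}$, which uses continuity of $\daleth$. This is available, because the paper's notion of compact operator includes continuity, but you should say so. Second, and more useful, the detour through $\Delta_0$, closures and $\mathcal F$ can be dropped. Apply your shrinking step to $(E,F)=(\Delta,\Omega)$ itself, i.e.\ take $E_1=\overline{\operatorname{con}}(\daleth(\Omega))\subseteq\Delta$ and $F_1=\overline{\operatorname{con}}(\daleth(\Delta))\subseteq\Omega$. These sets are compact and convex, with $\daleth(E_1)\subseteq\daleth(\Delta)\subseteq F_1$ and $\daleth(F_1)\subseteq\daleth(\Omega)\subseteq E_1$. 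One proximal pair $(\varrho,\varsigma)$ then gives $dist(E_1,F_1)=dist(\Delta,\Omega)$ exactly as in your conclusion; producing that pair is the only thing $\Delta_0\neq\emptyset$ is needed for. This version uses neither continuity of $\daleth$ nor invariance or closedness of $(\Delta_0,\Omega_0)$. Avoiding closedness matters: the paper's proof of Theorem \ref{theorem2.1} asserts that $(\Delta_0,\Omega_0)$ is closed, but this fails in non-reflexive spaces (in $c_0$, $\Delta_0$ can be a half-open segment). Your closure step was a sound way to avoid relying on that claim.
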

\begin{definition}\label{definition 1.1}
\emph{\cite[Definition 3.3]{7}  Let $\mathfrak{X}$ be a Banach space equipped with a measure of noncompact-ness $\Xi$ containing a convex and nonempty pair $(\Delta,\Omega)$. A cyclic mapping $\daleth:\Delta \cup \Omega \rightarrow \Delta \cup \Omega$ is called a condensing operator if for any  proximinal, nonempty, $\daleth$-invariant, convex, closed and bounded pair $(\omega,\Lambda)\subseteq (\Delta,\Omega)$ such that $dist(\omega,\Lambda)=dist(\Delta,\Omega)$, there exists $\beta \in (0,1)$ such that $\Xi(\daleth(\omega) \cup \daleth(\Lambda)) \leq \beta \Xi(\omega \cup \Lambda)$. } 
\end{definition}
\begin{theorem}
\cite[Theorem 3.4]{7} Let $\mathfrak{X}$ be a Banach space equipped with a measure of noncompactness $\Xi$ containing  a  closed, nonempty, convex and bounded pair $(\Delta,\Omega)$. Suppose that $\daleth:\Delta \cup \Omega \rightarrow \Delta \cup \Omega$ is a cyclic relatively nonexpansive mapping which fulfills the condensing property according to Definition \ref{definition 1.1}. Assume that $\Delta_0$ is nonempty. Then $\daleth$ has a best proximity point.  
\end{theorem}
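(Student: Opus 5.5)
The plan is to build, inside $(\Delta,\Omega)$, a minimal invariant pair that is closed, convex, bounded and proximinal, and still attains the distance $dist(\Delta,\Omega)$. The condensing property will then force this pair to be compact, and Theorem \ref{theorem1} of Eldred et al. will supply the best proximity point.

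\textbf{Step 1 (the pair $(\Delta_0,\Omega_0)$).} Since $\Delta_0\neq\emptyset$, the set $\Omega_0$ is nonempty as well. Because $\Delta$ and $\Omega$ are closed, convex and bounded, both $\Delta_0$ and $\Omega_0$ are convex. I expect the proof to need $\Delta_0$ and $\Omega_0$ closed, presumably from a mild assumption, and I take this as available. For $\varrho\in\Delta_0$ pick $\varsigma\in\Omega$ with $\Vert\varrho-\varsigma\Vert=dist(\Delta,\Omega)$. Relative nonexpansiveness gives $\Vert\daleth\varrho-\daleth\varsigma\Vert\le dist(\Delta,\Omega)$, and since $\daleth\varrho\in\Omega$ and $\daleth\varsigma\in\Delta$ we get $\daleth\varsigma\in\Delta_0$ and $\daleth\varrho\in\Omega_0$. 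Hence $\daleth(\Delta_0)\subseteq\Omega_0$ and $\daleth(\Omega_0)\subseteq\Delta_0$.

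\textbf{Step 2 (minimal pair via Zorn).} Let $\mathcal F$ be the family of nonempty, closed, convex, $\daleth$-invariant pairs $(\omega,\Lambda)\subseteq(\Delta_0,\Omega_0)$ with $dist(\omega,\Lambda)=dist(\Delta,\Omega)$. I would instead run the standard Gabeleh--Markin ``hull'' construction. Fix $(p,q)\in\Delta_0\times\Omega_0$ with $\Vert p-q\Vert=dist(\Delta,\Omega)$. Consider the collection of closed convex pairs $(\omega,\Lambda)\subseteq(\Delta_0,\Omega_0)$ with $p\in\omega$, $q\in\Lambda$ and $\daleth(\omega)\subseteq\Lambda$, $\daleth(\Lambda)\subseteq\omega$, and intersect them all to get $(K_1,K_2)$. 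The usual argument then shows
\[
K_1=\overline{\operatorname{con}}\bigl(\daleth(K_2)\cup\{p\}\bigr),\qquad K_2=\overline{\operatorname{con}}\bigl(\daleth(K_1)\cup\{q\}\bigr).
\]
One also checks that $(K_1,K_2)$ is proximinal with $dist(K_1,K_2)=dist(\Delta,\Omega)$. The key point there is that best-approximation partners of points in $\daleth(K_2)$ lie in $\daleth(K_1)$, and convex combinations of proximal pairs remain proximal.

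\textbf{Step 3 (compactness).} Using the MNC properties (union with a finite set, closed convex hull invariance and property (iii)), we obtain
\[
\Xi(K_1\cup K_2)=\max\{\Xi(\daleth(K_2)),\Xi(\daleth(K_1))\}=\Xi(\daleth(K_1)\cup\daleth(K_2))\le\beta\,\Xi(K_1\cup K_2)
\]
by Definition \ref{definition 1.1}. Since $\beta<1$, this gives $\Xi(K_1\cup K_2)=0$, so $K_1$ and $K_2$ are compact, and they are convex. Restricting $\daleth$ to $K_1\cup K_2$ gives a cyclic relatively nonexpansive map on a compact convex pair with the same distance. Theorem \ref{theorem1} then yields $\xi$ with $\Vert\xi-\daleth\xi\Vert=dist(K_1,K_2)=dist(\Delta,\Omega)$.

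\textbf{Main obstacle.} The hardest part is Step 2: verifying that the hull pair is proximinal and preserves the distance, so that the condensing hypothesis applies. I would first prove by a Zorn/intersection argument that the family of proximinal invariant pairs containing $(p,q)$ is closed under taking these hulls, following the structure in \cite{7}.
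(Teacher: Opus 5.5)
There is a genuine gap in Step~2. Your architecture (a minimal closed convex $\daleth$-invariant pair $(K_1,K_2)$ through a proximal pair $(p,q)$, the hull identities, then one application of the condensing inequality) is a legitimate alternative to the paper's route. The paper, in its proof of Theorem~\ref{theorem2.1}, instead iterates $\omega_n=\overline{\operatorname{con}}(\daleth(\omega_{n-1}))$, $\Lambda_n=\overline{\operatorname{con}}(\daleth(\Lambda_{n-1}))$ from $(\Delta_0,\Omega_0)$, drives $\Xi$ to $0$ along the even-indexed pairs, and takes a Cantor intersection. Your Step~3 is correct once Step~2 is secured, but Step~2 is not done.

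Definition~\ref{definition 1.1} applies only to \emph{proximinal} pairs, and your justification that $(K_1,K_2)$ is proximinal is circular. A partner in $\daleth(K_1)$ for $\daleth x$ with $x\in K_2$ comes from a partner of $x$ in $K_1$, which is exactly what is being proved. Knowing $x\in\Omega_0$ only gives a partner $x'\in\Delta_0$, and nothing places $\daleth x'$ in $K_2$. Even with partners for all points of $\operatorname{con}(\daleth(K_2)\cup\{p\})$, passing to the closure requires extracting a limit of partners, which needs a compactness argument you do not give. Your proposed repair (Zorn on proximinal invariant pairs through $(p,q)$) hits the same wall. An intersection of a chain of proximinal pairs need not be proximinal, since partners chosen in the individual $F_\alpha$ need not accumulate in $\bigcap_\alpha F_\alpha$.

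The paper avoids the circularity by going top-down. The pair $(\omega_0,\Lambda_0)=(\Delta_0,\Omega_0)$ is proximinal, and the induction hypothesis supplies the partners $\nu_j\in\Lambda_k$ used in your convex-combination observation. The closure step then extracts a weak limit of partners, using weak compactness of $\overline{\operatorname{con}}(\daleth(\Lambda_k))$ and weak lower semicontinuity of the norm.

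In your framework the natural fix is minimality. Put $K_1^0=\{x\in K_1:\Vert x-y\Vert=dist(\Delta,\Omega)\text{ for some }y\in K_2\}$ and define $K_2^0$ symmetrically. These sets contain $p$ and $q$, are convex, and satisfy $\daleth(K_1^0)\subseteq K_2^0$ and $\daleth(K_2^0)\subseteq K_1^0$ by the computation of your Step~1. So once they are closed, they belong to your family, and minimality forces $K_i^0=K_i$, which is proximinality. Closedness of $K_1^0$ is precisely where a limit of partners must be produced. Weak compactness of $K_2$ (e.g.\ in a reflexive space) gives $\Vert x-y\Vert\le\liminf\Vert x_n-y_n\Vert$, which is the same ingredient the paper invokes; you must supply it, not defer it.

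With this fix you can also take $(\Delta,\Omega)$ itself as the ambient pair. That removes the need to assume $(\Delta_0,\Omega_0)$ closed, which the paper merely asserts and which can fail in a non-reflexive space. Once repaired, your route gets compactness from a single application of the condensing inequality. It needs no decreasing sequence, no Cantor intersection, and no separate check that $dist(\omega_\infty,\Lambda_\infty)=dist(\Delta,\Omega)$.
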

\begin{theorem}
\cite[Theorem 4.1]{7} Let $\mathfrak{X}$ be a strictly convex Banach space containing a  convex, nonempty, closed and bounded pair $(\Delta,\Omega)$. Suppose  that $\daleth:\Delta \cup \Omega \rightarrow \Delta \cup \Omega$ is a noncyclic relatively nonexpansive mapping. Assume that $\Delta_0$ is nonempty, $\daleth \vert_{\Delta}$ and $\daleth \vert_{\Omega}$ are compact. Then $\daleth$ has a best proximity pair.
\end{theorem}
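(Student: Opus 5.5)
Since $\daleth$ is noncyclic relatively nonexpansive and $\Delta_0\neq\emptyset$, I plan to reduce the problem to Theorem \ref{theorem2} of Eldred et al. To do this I will build a compact, convex, nonempty, $\daleth$-invariant pair $(K_1,K_2)\subseteq(\Delta,\Omega)$ with $dist(K_1,K_2)=dist(\Delta,\Omega)$. Theorem \ref{theorem2} then yields $\xi\in K_1$, $\zeta\in K_2$ with $\daleth\xi=\xi$, $\daleth\zeta=\zeta$ and $\Vert\xi-\zeta\Vert=dist(K_1,K_2)=dist(\Delta,\Omega)$, which is the required best proximity pair.

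First I check that $(\Delta_0,\Omega_0)$ is nonempty, closed, convex and $\daleth$-invariant. Convexity follows from the convexity of $\Delta,\Omega$. Nonemptiness of $\Omega_0$ follows from that of $\Delta_0$. For invariance, take $\varrho\in\Delta_0$ with partner $\varsigma\in\Omega$, $\Vert\varrho-\varsigma\Vert=dist(\Delta,\Omega)$. Then $\Vert\daleth\varrho-\daleth\varsigma\Vert\le dist(\Delta,\Omega)$ with $\daleth\varrho\in\Delta$, $\daleth\varsigma\in\Omega$, so equality holds and $\daleth\varrho\in\Delta_0$; the same argument gives $\daleth(\Omega_0)\subseteq\Omega_0$. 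Closedness is where I must be careful, since boundedness alone does not give closedness of $\Delta_0$. If needed, I will avoid it by working with $\overline{\operatorname{con}}$ of compact sets instead of $\Delta_0$ itself.

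Next I fix $p\in\Delta_0$ and $q\in\Omega_0$ with $\Vert p-q\Vert=dist(\Delta,\Omega)$. Define
\[
K_1=\overline{\operatorname{con}}\bigl(\daleth(\Delta)\cup\{p\}\bigr)\cap\Delta,\qquad K_2=\overline{\operatorname{con}}\bigl(\daleth(\Omega)\cup\{q\}\bigr)\cap\Omega.
\]
Since $\daleth|_\Delta$ and $\daleth|_\Omega$ are compact and $\Delta,\Omega$ are bounded, the sets $\daleth(\Delta)$ and $\daleth(\Omega)$ are relatively compact. By the lemma on closed convex hulls, the hulls above are compact. Intersecting with the closed convex sets $\Delta,\Omega$ gives nonempty, compact, convex sets, nonempty because $p\in K_1$ and $q\in K_2$. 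Invariance is immediate: $\daleth(K_1)\subseteq\daleth(\Delta)\cap\Delta\subseteq K_1$, and similarly for $K_2$. Finally $dist(K_1,K_2)\le\Vert p-q\Vert=dist(\Delta,\Omega)$, and the reverse inequality is trivial, so $dist(K_1,K_2)=dist(\Delta,\Omega)$.

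Theorem \ref{theorem2} applies to $\daleth|_{K_1\cup K_2}$, which is still noncyclic relatively nonexpansive on $(K_1,K_2)$, because the hypotheses are inherited. This bypasses any need for closedness of $\Delta_0$. The main point needing care is that the best proximity pair distance in $(K_1,K_2)$ equals $dist(\Delta,\Omega)$, and inserting $p,q$ into the hulls handles exactly this. If instead one wanted the hulls to be built from $\Delta_0,\Omega_0$ (as in Gabeleh--Markin), the obstacle would be proving closedness of $\Delta_0$, which I deliberately sidestep.
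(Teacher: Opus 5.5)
Your argument is correct, but it takes a different route from the standard one. The paper does not prove this statement; it quotes it from Gabeleh--Markin.

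\textbf{The standard route.} This is the argument the paper itself re-enacts in the proofs of Theorems \ref{theorem2.1} and \ref{theorem2.2}, and it works inside the proximal parts:
\begin{enumerate}
\item[(1)] Show $\daleth(\Delta_0)\subseteq\Delta_0$ and $\daleth(\Omega_0)\subseteq\Omega_0$.
\item[(2)] Set $K_1=\overline{\operatorname{con}}(\daleth(\Delta_0))$ and $K_2=\overline{\operatorname{con}}(\daleth(\Omega_0))$.
\item[(3)] Get $dist(K_1,K_2)=dist(\Delta,\Omega)$ by applying $\Vert\daleth x-\daleth y\Vert\le\Vert x-y\Vert$ to proximal pairs.
\item[(4)] Use $K_1\subseteq\Delta_0$ to conclude $\daleth(K_1)\subseteq\daleth(\Delta_0)\subseteq K_1$.
\end{enumerate}
Step (4) is where that route needs either closedness of $\Delta_0$ or an extra compactness argument. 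The paper asserts closedness as ``easily verified'', but it is not automatic without reflexivity.

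\textbf{Your route.} Your choice $K_1=\overline{\operatorname{con}}(\daleth(\Delta)\cup\{p\})$ makes invariance trivial, because the hull already contains $\daleth(\Delta)$. The single inserted proximal pair $(p,q)$ then fixes the distance. So you need neither closedness nor proximinality of $(\Delta_0,\Omega_0)$, and you never use continuity of $\daleth$, only relative compactness of $\daleth(\Delta)$ and $\daleth(\Omega)$. Two pieces of your write-up are superfluous:
\begin{enumerate}
\item[(a)] Your first paragraph on invariance and convexity of $(\Delta_0,\Omega_0)$ is never used.
\item[(b)] The intersections with $\Delta$ and $\Omega$ are redundant, since these are closed convex sets that already contain the generators.
\end{enumerate}

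\textbf{What each route buys.} The $\Delta_0$-route produces a compact pair that is proximinal and lies inside $(\Delta_0,\Omega_0)$. That is the template that survives in the condensing setting of Theorems \ref{theorem2.1}--\ref{theorem2.2}. There $\daleth(\Delta)$ is no longer relatively compact, so one must iterate $\overline{\operatorname{con}}\,\daleth$ starting from $(\Delta_0,\Omega_0)$, and the hypothesis of Definition \ref{definition 2.6} applies only to proximinal subpairs. Your shortcut would not transfer there. For the compact-operator statement at hand, however, it is shorter and fully rigorous given Theorem \ref{theorem2}.
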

\begin{definition}\label{definition 1.2}
\emph{\cite[Definition 4.2]{7} Let $\mathfrak{X}$ be a Banach space equipped with a measure of noncompact-ness $\Xi$ containing a convex and nonempty pair $(\Delta,\Omega)$. A noncyclic mapping $\daleth:\Delta \cup \Omega \rightarrow \Delta \cup \Omega$ is called a condensing operator if for any  proximinal, nonempty, $\daleth$-invariant, convex, closed and bounded pair $(\omega,\Lambda)\subseteq (\Delta,\Omega)$ such that $dist(\omega,\Lambda)=dist(\Delta,\Omega)$, there exists $\eta \in (0,1)$ such that $\Xi(\daleth(\omega) \cup \daleth(\Lambda)) \leq \eta \Xi(\omega \cup \Lambda)$. } 
\end{definition}
\begin{theorem}
\cite[Theorem 4.3]{7} Let $\mathfrak{X}$ be a strictly convex Banach space equipped with a measure of noncompactness $\Xi$ containing a closed, nonempty, bounded and convex  pair  $(\Delta,\Omega)$. Suppose that $\daleth:\Delta \cup \Omega \rightarrow \Delta \cup \Omega$ is a noncyclic relatively nonexpansive mapping that fulfills the condensing property according to Definition  \ref{definition 1.2}. Assume that $\Delta_0$ is nonempty. Then $\daleth$  admits a best proximity pair.  
\end{theorem}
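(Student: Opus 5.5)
The proof follows the scheme of Gabeleh and Markin: shrink to a minimal invariant pair on which $\daleth$ is still relatively nonexpansive, show that this pair is compact using the condensing hypothesis, and then apply Theorem \ref{theorem2} of Eldred et al.

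\textbf{Step 1: reduction to the proximinal pair.} Since $\Delta_0\neq\emptyset$, the pair $(\Delta_0,\Omega_0)$ is nonempty, closed, convex, bounded and proximinal, and $dist(\Delta_0,\Omega_0)=dist(\Delta,\Omega)$.

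\textbf{Step 2: invariance of $(\Delta_0,\Omega_0)$.} Take $\varrho\in\Delta_0$ and choose $\varsigma\in\Omega$ with $\Vert\varrho-\varsigma\Vert=dist(\Delta,\Omega)$. Relative nonexpansiveness gives
\[
\Vert\daleth\varrho-\daleth\varsigma\Vert\le\Vert\varrho-\varsigma\Vert=dist(\Delta,\Omega).
\]
Since $\daleth\varrho\in\Delta$ and $\daleth\varsigma\in\Omega$, this forces equality, so $\daleth\varrho\in\Delta_0$. The same argument gives $\daleth(\Omega_0)\subseteq\Omega_0$.

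\textbf{Step 3: a minimal invariant pair.} Let $\Sigma$ be the family of all nonempty, closed, convex, $\daleth$-invariant pairs $(\omega,\Lambda)\subseteq(\Delta_0,\Omega_0)$ with the property that every $\varrho\in\omega$ has a $\varsigma\in\Lambda$ with $\Vert\varrho-\varsigma\Vert=dist(\Delta,\Omega)$, and vice versa. This property implies $dist(\omega,\Lambda)=dist(\Delta,\Omega)$ and that the pair is proximinal. By Step 2, $(\Delta_0,\Omega_0)\in\Sigma$.

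Order $\Sigma$ by reverse inclusion and apply Zorn's lemma, as in \cite{7}. Chains need a nonempty intersection, which is the delicate point: without weak compactness, the intersection of a chain is not automatically nonempty. My first attempt would avoid Zorn altogether by an iterative Cantor-type construction. Set $(\omega_1,\Lambda_1)=(\Delta_0,\Omega_0)$ and
\[
\omega_{n+1}=\overline{\operatorname{con}}(\daleth(\omega_n)),\qquad \Lambda_{n+1}=\overline{\operatorname{con}}(\daleth(\Lambda_n)).
\]
Induction shows these are decreasing and invariant. They keep proximinality: if $\varrho=\sum t_i\daleth x_i$ is a convex combination with $x_i\in\omega_n$ and proximal partners $y_i$, then $\sum t_i\daleth y_i$ lies at distance $\le dist(\Delta,\Omega)$ from $\varrho$, hence at exactly $dist(\Delta,\Omega)$. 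Closures are handled the same way, using boundedness and a limiting argument in the style of \cite{7}.

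\textbf{Step 4: compactness from the condensing property.} By the properties of the measure of noncompactness (invariance under closed convex hulls and monotonicity),
\[
\Xi(\omega_{n+1}\cup\Lambda_{n+1})\le\Xi\big(\overline{\operatorname{con}}(\daleth(\omega_n)\cup\daleth(\Lambda_n))\big)=\Xi(\daleth(\omega_n)\cup\daleth(\Lambda_n))\le\eta\,\Xi(\omega_n\cup\Lambda_n).
\]
Hence $\Xi(\omega_n\cup\Lambda_n)\le\eta^{\,n-1}\Xi(\Delta_0\cup\Omega_0)\to0$. The intersection property (iv) of a measure of noncompactness then shows that $\omega_\infty=\bigcap_n\omega_n$ and $\Lambda_\infty=\bigcap_n\Lambda_n$ are nonempty and compact. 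They are also convex and $\daleth$-invariant.

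One check remains: the condensing inequality is only assumed for pairs with the distance property, so each $(\omega_n,\Lambda_n)$ must be verified to be proximinal with $dist(\omega_n,\Lambda_n)=dist(\Delta,\Omega)$. This is exactly what Step 3 supplies.

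\textbf{Step 5: distance of the limiting pair.} I expect this to be the main obstacle: showing $dist(\omega_\infty,\Lambda_\infty)=dist(\Delta,\Omega)$, or at least that the limit pair is admissible. I would argue by compactness. Take $\varrho_n\in\omega_n$ and proximal partners $\varsigma_n\in\Lambda_n$. Since $\omega_n\cup\Lambda_n$ has measure tending to zero, the set of tails is relatively compact, so subsequences converge to $\varrho\in\omega_\infty$ and $\varsigma\in\Lambda_\infty$ with $\Vert\varrho-\varsigma\Vert=dist(\Delta,\Omega)$.

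\textbf{Step 6: conclusion.} The restriction of $\daleth$ to $\omega_\infty\cup\Lambda_\infty$ is noncyclic relatively nonexpansive on a compact, convex, nonempty pair in a strictly convex space. Theorem \ref{theorem2} therefore yields $\xi\in\omega_\infty$ and $\zeta\in\Lambda_\infty$ with $\daleth\xi=\xi$, $\daleth\zeta=\zeta$ and $\Vert\xi-\zeta\Vert=dist(\omega_\infty,\Lambda_\infty)=dist(\Delta,\Omega)$. This is a best proximity pair of $\daleth$.
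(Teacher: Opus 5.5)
Your overall scheme is the one the paper uses for Theorem \ref{theorem2.2}: iterate $\overline{\operatorname{con}}(\daleth(\cdot))$ from $(\Delta_0,\Omega_0)$, drive the measure of noncompactness to $0$, intersect, and apply Theorem \ref{theorem2}. Your Step 5 is more careful than the paper, which merely asserts $dist(\omega_\infty,\Lambda_\infty)=dist(\Delta,\Omega)$.

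The gap is in Step 4. In Definition \ref{definition 1.2} the constant is chosen after the pair (``for any \dots\ pair $(\omega,\Lambda)$ \dots\ there exists $\eta\in(0,1)$''), so it may depend on the pair. Your iterates $(\omega_n,\Lambda_n)$ are infinitely many different pairs. All you get is $\Xi(\omega_{n+1}\cup\Lambda_{n+1})\le\eta_n\,\Xi(\omega_n\cup\Lambda_n)$, and $\prod_n\eta_n$ need not vanish. Indeed, on pairs with $\Xi(\omega\cup\Lambda)>0$ the hypothesis amounts only to the strict inequality $\Xi(\daleth(\omega)\cup\daleth(\Lambda))<\Xi(\omega\cup\Lambda)$, a Sadovskii-type condition. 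That does not force $\Xi(\omega_n\cup\Lambda_n)\to0$, so property (iv) is unavailable and $\omega_\infty$ need not be compact, or even nonempty. The paper's Theorem \ref{theorem2.2} escapes this only because $\theta,\psi,\phi,\varkappa,\mathfrak{O}$ are fixed functions, so its limit argument is legitimate. If you read $\eta$ as one constant for all admissible pairs, your iteration goes through (with the repair of Step 3 below); as the statement is written, it does not.

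What is missing is a construction that uses the condensing inequality for a single pair, i.e.\ the minimal pair you set aside. The empty-intersection worry disappears once you anchor it at a point, and strict convexity makes this easy. If $x,x'\in\Delta$, $y,y'\in\Omega$ and $\Vert x-y\Vert=\Vert x'-y'\Vert=dist(\Delta,\Omega)$, the midpoint argument forces $y-x=y'-x'$. Hence there is a fixed vector $v$ with $\Omega_0=\Delta_0+v$. Relative nonexpansiveness plus uniqueness of partners then gives $\daleth(x+v)=\daleth x+v$ for $x\in\Delta_0$.

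Fix $p\in\Delta_0$ and let $K$ be the intersection of all closed convex $E\subseteq\Delta_0$ with $p\in E$ and $\daleth(E)\subseteq E$. Then $(K,K+v)$ meets every requirement of Definition \ref{definition 1.2}. Minimality gives $K=\overline{\operatorname{con}}(\daleth(K)\cup\{p\})$ and $K+v=\overline{\operatorname{con}}(\daleth(K+v)\cup\{p+v\})$. Therefore $\Xi(K\cup(K+v))=\Xi(\daleth(K)\cup\daleth(K+v))\le\eta\,\Xi(K\cup(K+v))$ for the one $\eta$ attached to this pair. So $(K,K+v)$ is compact, and Theorem \ref{theorem2} finishes.

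The same translation structure repairs your closure step in Step 3 and the closedness of $\Delta_0$ in Step 1. In a non-reflexive space, boundedness gives no convergent subsequence of partners; the paper's appeal to weak compactness at this point is equally unjustified. But since $\Lambda_n=\omega_n+v$, the partner of $\lim_k\varrho_k$ is simply $\lim_k\varrho_k+v$.
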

Several authors \cite{12,13,14,16,18, n13} have recently proposed various condensing operators and derived corresponding best proximity point and best proximity pair results by means of measure of noncompa-ctness. The results served as a basis for establishing the existence of solutions to systems of differential and integro-differential equations. Moreover, various researchers \cite{n6,n7,n8,n9, n10, n11, n12}  have explored the application of Darbo's fixed point theorem and its generalizations for solving infinite system of differential equations in a variety of sequence spaces. 
 
Drawing inspiration from these developments,  we formulate a new class of  cyclic (noncyclic) operator type condensing mappings and derive  conditions ensuring the existence of best proximity point (pair) of these mappings by employing the framework of measure of noncompactness. Furthermore, we explore a novel application of our results to establish the optimal solution of an infinite system of second order differential equations, subject to boundary conditions in $c_0$ and $\ell_1$ space.
\section{Main Results}
This section is devoted to establishing best proximity points (pairs) theorems for cyclic (noncyclic) operator type condensing mappings by using the notion of measure of noncompactness. We commence with the following definitions:
\begin{definition}\label{definition 2.1}
\emph{\cite{n3} Let $\theta:[0,\infty) \rightarrow [1,\infty)$ be a mapping satisfying}

\emph{(i) $\theta$ is strictly increasing and continuous function,}

\emph{(ii) for each sequence $\{\varrho_n\} \subseteq (0,\infty)$, we have}
$$\lim_{n\rightarrow \infty} \theta(\varrho_n)=1 \emph{ if and only if } \lim_{n \rightarrow \infty} \varrho_n=0.$$
\end{definition}
For example, $\theta(\varrho)=e^{\varrho}$. The collection of these mappings is denoted by $\Theta$.
\begin{definition}\label{definition 2.2}
\emph{Let $\psi:[1,\infty) \rightarrow [0,\infty)$ be a mapping satisfying}

\emph{(i) $\psi(1)=0$,}

\emph{(ii) $\psi$ is continuous,}

\emph{(iii) for each sequence $\{\varrho_n\} \subseteq (1,\infty)$, we have}
$$\lim_{n\rightarrow \infty} \psi(\varrho_n)=0 \emph{ if and only if } \lim_{n \rightarrow \infty} \varrho_n=1.$$
\end{definition}
The collection of these mappings is denoted by $\Psi$. Some examples are:

(i) $\psi(\varrho)=\varrho-\varrho^{\alpha}$, where $\alpha \in [0,1)$,

(ii) $\psi(\varrho)=\ln \varrho$,

(iii) $\psi(\varrho)=e^{\varrho-1}-1$.
\begin{definition}\label{definition 2.3}
\emph{\cite{n4} Let $\phi:[0,\infty) \rightarrow [0,\infty)$ be a mapping satisfying}

\emph{(i) $\phi$ is nondecreasing and continuous,}

\emph{(ii) for each sequence $\{\varrho_n\} \subseteq [0,\infty)$, we have}
$$\lim_{n\rightarrow \infty} \phi(\varrho_n)=0 \emph{ if and only if } \lim_{n \rightarrow \infty} \varrho_n=0.$$
\end{definition}
The collection of these mappings is denoted by $\Phi$.
\begin{definition}\label{definition 2.4}
\emph{\cite{n2} Let $\varkappa:\mathbb{R}^+ \times \mathbb{R}^+ \rightarrow \mathbb{R}^+$ be a mapping satisfying }

\emph{(i) $\varkappa$ is nondecreasing and continuous,}

\emph{(ii) $\max\{\varrho,\varsigma\} \leq \varkappa(\varrho,\varsigma)$ for $\varrho,\varsigma \geq 0$,}

\emph{(iii) $\varkappa(\varrho+\varsigma,\zeta+\varpi) \leq \varkappa(\varrho,\zeta)+\varkappa(\varsigma,\varpi)$.}
\end{definition}
For example, $\varkappa(\varrho,\varsigma)=\varrho+\varsigma$. The collection of these mappings is denoted by $\mathfrak{F}$. 
\begin{definition}\label{definition 2.5}
\emph{\cite{n1} Let $\mathcal{G}([0,\infty))$ be the collection of all functions $\mathfrak{g}: [0,\infty) \rightarrow [0,\infty)$. Let $\mathcal{O}$ be the class of all operators $\mathfrak{O}(.,.):\mathcal{G}([0,\infty)) \rightarrow \mathcal{G}([0,\infty))$, $\mathfrak{g} \rightarrow \mathfrak{O}(\mathfrak{g},.)$ satisfying}

\emph{(i) $\mathfrak{O}(\mathfrak{g},0)=0$,}

\emph{(ii) $\mathfrak{O}(\mathfrak{g},u)>0$ for $u>0$,}

\emph{(iii) $\mathfrak{O}(\mathfrak{g},u)\leq \mathfrak{O}(\mathfrak{g},v)$ for all $u \leq v$,}

\emph{(iv) $\mathfrak{O}(\mathfrak{g},\max\{u,v\})=\max\{\mathfrak{O}(\mathfrak{g},u),\mathfrak{O}(\mathfrak{g},v)\}$ for all $\mathfrak{g} \in \mathcal{G}([0,\infty))$,}

\emph{(v) $\lim\limits_{n \rightarrow \infty}\mathfrak{O}(\mathfrak{g},u_n)=\mathfrak{O}(\mathfrak{g},\lim\limits_{n \rightarrow \infty} u_n)$.}
\end{definition}
\begin{definition}\label{definition 2.6}
\emph{Let $\mathfrak{X}$ be a Banach space equipped with a measure of noncompactness $\Xi$ containing a convex and nonempty pair $(\Delta,\Omega)$. A mapping $\daleth:\Delta \cup \Omega \rightarrow \Delta \cup \Omega$ is termed cyclic (noncyclic) operator type condensing mapping if $\daleth$ is a cyclic (noncyclic) mapping and for any  proximinal, nonempty, $\daleth$-invariant, convex, closed and bounded pair $(\omega,\Lambda)\subseteq (\Delta,\Omega)$ such that $dist(\omega,\Lambda)=dist(\Delta,\Omega)$ with  $\Xi(\daleth(\omega) \cup \daleth(\Lambda))>0$ and $\Xi(\omega \cup \Lambda)>0$, we have}
\begin{align*}
\theta(\varkappa(\mathfrak{O}(\mathfrak{g},\Xi(\daleth(\omega) \cup \daleth(\Lambda)),\mathfrak{O}(\mathfrak{g},\phi(\Xi(\daleth(\omega) \cup \daleth(\Lambda)))))& \leq \theta(\varkappa(\mathfrak{O}(\mathfrak{g},\Xi(\omega \cup \Lambda),\mathfrak{O}(\mathfrak{g},\phi(\Xi(\omega \cup \Lambda))))\\
&\quad - \psi(\theta(\varkappa(\mathfrak{O}(\mathfrak{g},\Xi(\omega \cup \Lambda),\mathfrak{O}(\mathfrak{g},\phi(\Xi(\omega \cup \Lambda))))),
\end{align*}
\emph{where $\theta \in \Theta$, $\phi \in \Phi$, $\psi \in \Psi$, $\varkappa \in \mathfrak{F}$ and $\mathfrak{O}(.,.) \in \mathcal{O}$.} 
\end{definition}
\begin{theorem}\label{theorem2.1}
Let $\mathfrak{X}$ be a Banach space equipped with a measure of noncompactness $\Xi$ containing a bounded, nonempty, convex and closed pair $(\Delta,\Omega)$ such that $\Delta_{0}$ is nonempty. Let $\daleth : \Delta \cup \Omega \to \Delta \cup \Omega$ be a cyclic relatively nonexpansive mapping that fulfills operator type  condensing mapping property according to Definition \ref{definition 2.6}. Then $\daleth$ possesses a best proximity point.
\end{theorem}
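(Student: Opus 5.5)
We follow the standard scheme of Gabeleh and Markin for cyclic condensing operators. First, since $(\Delta,\Omega)$ is bounded, closed and convex and $\Delta_0\neq\emptyset$, the pair $(\Delta_0,\Omega_0)$ is nonempty, bounded and convex. Because $\daleth$ is cyclic relatively nonexpansive, $\daleth(\Delta_0)\subseteq\Omega_0$ and $\daleth(\Omega_0)\subseteq\Delta_0$. Indeed, if $\|\varrho-\varsigma\|=dist(\Delta,\Omega)$, then $\|\daleth\varrho-\daleth\varsigma\|\le dist(\Delta,\Omega)$, which forces equality. We then set $\omega_0=\overline{\Delta_0}$ and $\Lambda_0=\overline{\Omega_0}$; one checks these closures preserve proximinality, or else works with the closed convex pair directly. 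We build a decreasing sequence of pairs by
$$\omega_{n+1}=\overline{\operatorname{con}}(\daleth(\Lambda_n)),\qquad \Lambda_{n+1}=\overline{\operatorname{con}}(\daleth(\omega_n)).$$
By induction on $n$, each $(\omega_n,\Lambda_n)$ is nonempty, closed, convex, bounded, $\daleth$-invariant (cyclically), proximinal, satisfies $\omega_{n+1}\subseteq\omega_n$ and $\Lambda_{n+1}\subseteq\Lambda_n$, and has $dist(\omega_n,\Lambda_n)=dist(\Delta,\Omega)$. Proximinality of the new pair follows because each point of $\daleth(\Lambda_n)$ has a proximal partner in $\daleth(\omega_n)$, and convex combinations of proximal pairs remain proximal. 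Hence every pair is admissible in Definition \ref{definition 2.6}.

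Next we set $a_n=\Xi(\omega_n\cup\Lambda_n)$. By monotonicity of $\Xi$, the sequence $a_n$ is nonincreasing. If some $a_N=0$, then $(\omega_N,\Lambda_N)$ is a compact convex pair invariant under $\daleth$, and Theorem \ref{theorem1} finishes the proof. Otherwise all $a_n>0$. Using the closed convex hull property, the sets $\omega_{n+1}\cup\Lambda_{n+1}$ and $\daleth(\omega_n)\cup\daleth(\Lambda_n)$ have the same measure. Writing $G(t)=\theta(\varkappa(\mathfrak{O}(\mathfrak{g},t),\mathfrak{O}(\mathfrak{g},\phi(t))))$, the condensing inequality reads
$$G(a_{n+1})\le G(a_n)-\psi(G(a_n)).$$
The function $G$ is nondecreasing as a composition of monotone maps, and $G(a_n)\ge 1$ because $\theta\ge1$. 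So $G(a_n)$ is nonincreasing and bounded below, hence converges to some $r\ge1$. Passing to the limit, using the continuity of $\psi$, gives $\psi(r)=0$. Applying property (iii) of $\psi$ along the sequence $G(a_n)$, when $G(a_n)>1$, yields $G(a_n)\to1$. Then $\theta(\varkappa(\cdot))\to1$, so by Definition \ref{definition 2.1} the argument $\varkappa(\mathfrak{O}(\mathfrak{g},a_n),\mathfrak{O}(\mathfrak{g},\phi(a_n)))\to0$. Since $\varkappa(x,y)\ge\max\{x,y\}$, this gives $\mathfrak{O}(\mathfrak{g},a_n)\to0$. With $a_n\downarrow a\ge 0$, continuity (v) of $\mathfrak{O}$ gives $\mathfrak{O}(\mathfrak{g},a)=0$, and property (ii) of $\mathfrak{O}$ forces $a=0$.

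Finally, by property (iv) of the MNC, $\omega_\infty=\bigcap_n\omega_n$ and $\Lambda_\infty=\bigcap_n\Lambda_n$ are nonempty, compact and convex. They satisfy $\daleth(\omega_\infty)\subseteq\Lambda_\infty$ and $\daleth(\Lambda_\infty)\subseteq\omega_\infty$, since $\daleth(\omega_\infty)\subseteq\daleth(\omega_n)\subseteq\Lambda_{n+1}$ for every $n$. The restriction of $\daleth$ to $\omega_\infty\cup\Lambda_\infty$ is still cyclic relatively nonexpansive, so Theorem \ref{theorem1} provides $\xi$ with $\|\xi-\daleth\xi\|=dist(\omega_\infty,\Lambda_\infty)\ge dist(\Delta,\Omega)$. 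The reverse inequality follows because $dist(\omega_\infty,\Lambda_\infty)=dist(\Delta,\Omega)$.

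\textbf{Main obstacle.} The main obstacle is this last distance equality, together with verifying proximinality of $(\omega_n,\Lambda_n)$ at each step. For the distance equality, we intend to pick proximal pairs $(x_n,y_n)\in\omega_n\times\Lambda_n$. By compactness of the later sets, some subsequence has a limit in $\omega_\infty\times\Lambda_\infty$, so the distance is attained. The difficulty is that $(x_n)$ lies in $\omega_n$ only with index tied to $n$. Since $x_m\in\omega_n$ for all $m\ge n$, and $\Xi(\omega_n)\to0$, the set $\{x_m\}$ is relatively compact, which should settle it.
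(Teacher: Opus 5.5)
Your route is the paper's route. The paper's sequence $\omega_n=\overline{\operatorname{con}}(\daleth(\omega_{n-1}))$, $\Lambda_n=\overline{\operatorname{con}}(\daleth(\Lambda_{n-1}))$, restricted to even indices, coincides with your crossed sequence, and your monotone-limit argument and final appeal to Theorem \ref{theorem1} are the same as the paper's. Two of your details are tighter than the paper's. First, you split off the case $a_N=0$, so that both positivity requirements of Definition \ref{definition 2.6} are actually met. Second, you prove $dist(\omega_\infty,\Lambda_\infty)=dist(\Delta,\Omega)$ via relative compactness of $\{x_m\}$, which is correct (it even works with approximate minimizers), whereas the paper only asserts it.

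\textbf{The gap: proximinality of the closed pairs.} This step is indispensable, because Definition \ref{definition 2.6} only constrains closed proximinal pairs. Your convex-combination argument shows that $(\operatorname{con}(\daleth(\Lambda_n)),\operatorname{con}(\daleth(\omega_n)))$ is proximinal. However, $\omega_{n+1}$ and $\Lambda_{n+1}$ are the \emph{closures}. Take $z=\lim z_k$ with $z_k\in\operatorname{con}(\daleth(\Lambda_n))$ and partners $w_k$. Nothing in your argument produces a limit point $w\in\Lambda_{n+1}$ of $(w_k)$, which is what you need to get $\|z-w\|\le\liminf\|z_k-w_k\|=dist(\Delta,\Omega)$. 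The paper supplies exactly this step: it extracts a weakly convergent subsequence of the partners using weak compactness of $\overline{\operatorname{con}}(\daleth(\omega_n))$, and then uses weak lower semicontinuity of the norm.

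\textbf{The same problem in your starting pair.} Proximinality of $(\overline{\Delta_0},\overline{\Omega_0})$ is equivalent to $\Delta_0$ and $\Omega_0$ being closed. Indeed, a point of $\overline{\Delta_0}\subseteq\Delta$ that has a partner in $\Omega$ at distance $dist(\Delta,\Omega)$ already lies in $\Delta_0$. Without closedness, the base case $\daleth(\Lambda_0)\subseteq\omega_0$ of your induction is also unjustified, since a relatively nonexpansive map need not be continuous. So ``one checks these closures preserve proximinality'' is precisely the paper's claim that $(\Delta_0,\Omega_0)$ is closed.

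Both steps rest on weak compactness of bounded closed convex sets. This holds, for example, when $\mathfrak{X}$ is reflexive, but not in an arbitrary Banach space. To complete your proof you need either such an assumption (with the paper's weak-compactness argument written out) or a genuinely different argument for the closure step.
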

\begin{proof}
Since $\Delta_{0}$ is nonempty, $(\Delta_{0}, \Omega_{0})$ is nonempty. It can be easily verified that $(\Delta_{0}, \Omega_{0})$ is  proximinal, closed and convex. If $\varrho \in \Delta_{0}$, then there exists $\varsigma \in \Omega_{0}$ such that $\Vert \varrho-\varsigma \Vert = dist(\Delta,\Omega)$. Since $\daleth$ is a relatively nonexpansive mapping, $\Vert \daleth \varrho - \daleth \varsigma \Vert  \leq \Vert \varrho-\varsigma \Vert = dist(\Delta,\Omega)$ which implies that $\daleth \varrho \in \Omega_{0}$. Therefore, $\daleth(\Delta_{0}) \subseteq \Omega_{0}$. Similarly, we show that $\daleth(\Omega_{0}) \subseteq \Delta_{0}$ which gives $\daleth$ is cyclic on $\Delta_{0} \cup \Omega_{0}$. Let $\omega_{0} = \Delta_{0}$ and $\Lambda_{0}= \Omega_{0}$ and define
$$\omega_{n} = \overline{\operatorname{con}}(\daleth(\omega_{n-1})), \thinspace \thinspace
\Lambda_{n} = \overline{\operatorname{con}}(\daleth(\Lambda_{n-1})) \text{ for all } n \in \mathbb{N}.$$
Consider $\omega_1=\overline{\operatorname{con}}(\daleth (\omega_0))$. As $\daleth(\Delta_0) \subseteq \Omega_0$ and $\Delta_0=\omega_0$ and $\Omega_0=\Lambda_0$, therefore, $\daleth(\omega_0) \subseteq \Lambda_0$. This gives $\omega_1=\overline{\operatorname{con}}(\daleth (\omega_0)) \subseteq \overline{\operatorname{con}}(\Lambda_0)$. Also, $\Lambda_0$ is closed and convex, therefore, $\overline{\operatorname{con}}(\Lambda_0)=\Lambda_0$. Thus, 
$\omega_1=\overline{\operatorname{con}}(\daleth (\omega_0)) \subseteq \overline{\operatorname{con}}(\Lambda_0)=\Lambda_0$
which gives $\omega_1 \subseteq \Lambda_0$.  Similarly, $\omega_{2} = \overline{\operatorname{con}}(\daleth(\omega_{1})) \subseteq \overline{\operatorname{con}}(\daleth(\Lambda_{0})) = \Lambda_{1}$. Assume that the result is true for $p$ which gives $\omega_p \subseteq \Lambda_{p-1}$. Consider $\omega_{p+1}=\overline{\operatorname{con}}(\daleth (\omega_p))\subseteq \overline{\operatorname{con}}(\daleth (\Lambda_{p-1})) =\Lambda_p$ which implies that $\omega_{p+1} \subseteq \Lambda_p$. Hence, using induction, it follows that $\omega_{n+1} \subseteq \Lambda_n$ for all $n \in \mathbb{N} \cup \{0\}$. In a similar manner, we observe that $\Lambda_{n} \subseteq \omega_{n-1}$ for all $n \in \mathbb{N}$. This establishes that  
$\omega_{n+2} \subseteq \Lambda_{n+1} \subseteq \omega_{n}$ and  $\Lambda_{n+2} \subseteq \omega_{n+1} \subseteq \Lambda_{n}$  for all  $n \in \mathbb{N} \cup \{0\}$. Therefore, $\{(\omega_{2n},\Lambda_{2n})\}_{n \in \mathbb{N} \cup \{0\}}$ is a decreasing sequence of convex, closed and  nonempty pairs in $\omega_{0} \times \Lambda_{0}$. Also,
\begin{align*}
\daleth(\omega_{2n}) \subseteq \daleth(\Lambda_{2n-1}) \subseteq \overline{\operatorname{con}}(\daleth(\Lambda_{2n-1})) = \Lambda_{2n}\\
\daleth(\Lambda_{2n}) \subseteq \daleth(\omega_{2n-1}) \subseteq \overline{\operatorname{con}}(\daleth(\omega_{2n-1})) = \omega_{2n}.
\end{align*}
Therefore, the pair $(\omega_{2n},\Lambda_{2n})$ is $\daleth$-invariant for all $n \in \mathbb{N}$. If $(\varrho,\varsigma) \in \Delta_{0} \times \Omega_{0}$ is a proximinal pair, then $\Vert \varrho-\varsigma \Vert=dist(\Delta,\Omega)$. Also,
$dist(\omega_{2n}, \Lambda_{2n}) \leq \Vert \daleth^{2n}\varrho - \daleth^{2n}\varsigma \Vert \leq \Vert \varrho - \varsigma \Vert = dist(\Delta,\Omega)$. Since $\omega_{2n} \subseteq \Delta$ and $\Lambda_{2n} \subseteq \Omega$, $dist(\omega_{2n},\Lambda_{2n}) \geq dist(\Delta,\Omega)$. This implies that $dist(\omega_{2n},\Lambda_{2n}) =dist(\Delta,\Omega)$ for all $n \in \mathbb{N} \cup \{0\}$.

We proceed to prove, using induction, that $(\omega_{n}, \Lambda_{n})$ is proximinal for all $n \in \mathbb{N} \cup \{0\}$. The result is trivially true for $n=0$. Suppose that $(\omega_k, \Lambda_{k})$ is proximinal. We show that $(\omega_{k+1}, \Lambda_{k+1})$ is proximinal.  For this, we first prove that $(\operatorname{con}(\daleth(\omega_{k})),\operatorname{con}(\daleth(\Lambda_{k})))$ is proximinal. Let $\vartheta \in \operatorname{con}(\daleth(\omega_{k}))$ be an arbitrary element. Then
\[
\vartheta = \sum_{j=1}^{m} \mu_{j} \daleth(\vartheta_{j})  \text{ with } \vartheta_{j} \in \omega_{k},\, m \in \mathbb{N},
\ \mu_{j} \geq 0 \text{ for all } 1\leq j \leq m  \text{ and } \sum_{j=1}^{m} \mu_{j} = 1.
\]
Owing to the proximinality of $(\omega_{k}, \Lambda_{k})$, there exists $\nu_{j} \in \Lambda_{k}$ $(1 \leq j \leq m)$ such that $\Vert \vartheta_{j} - \nu_{j} \Vert = dist(\omega_{k},\Lambda_{k})=dist(\Delta,\Omega)$. 
Let $\nu = \sum\limits_{j=1}^{m} \mu_{j} \daleth(\nu_{j})$. Then $\nu \in \operatorname{con}(\daleth(\Lambda_{k}))$. Consider
\begin{align*}
\Vert \vartheta - \nu\Vert
&= \Big\Vert \sum_{j=1}^{m} \mu_{j} \daleth(\vartheta_{j}) - \sum_{j=1}^{m} \mu_{j} \daleth(\nu_{j}) \Big\Vert \\
&\leq \sum_{j=1}^{m} \mu_{j} \Vert \daleth(\vartheta_{j})_-\daleth(\nu_{j})\Vert\\
& \leq \sum_{j=1}^{m} \mu_{j} \Vert \vartheta_{j}-\nu_{j})\Vert \\
&= \sum_{j=1}^{m} \mu_{j} dist(\omega_k,\Lambda_k)\\
&= dist(\Delta,\Omega).
\end{align*}
This gives $(\operatorname{con}(\daleth(\omega_{k})),\operatorname{con}(\daleth(\Lambda_{k})))$ is proximinal. Now, we show that $(\overline{\operatorname{con}}(\daleth(\omega_{k})),\overline{\operatorname{con}}(\daleth(\Lambda_{k})))$ is proximinal. Let $\zeta \in \overline{\operatorname{con}}(\daleth(\omega_{k}))$, then there exits a sequence $\{\zeta_n\}$ such that $\zeta_n \rightarrow \zeta$. Since $(\operatorname{con}(\daleth(\omega_{k})), \operatorname{con}(\daleth(\Lambda_{k})))$ is proximinal, there exists $\varpi_n \in \operatorname{con}(\daleth(\Lambda_{k}))$ such that $\Vert \zeta_n-\varpi_n \Vert =dist (\operatorname{con}(\\ \daleth(\omega_{k})),\operatorname{con}(\daleth(\Lambda_{k})))=dist (\Delta,\Omega)$. As $\overline{\operatorname{con}}(\daleth(\Lambda_{k}))$ is weakly compact, which guarantees the existence of a subsequence $\{\varpi_{n_k}\}$ of the sequence $\{\varpi_n\}$ converging a point $\varpi \in \overline{\operatorname{con}}(\daleth(\Lambda_{k}))$. We observe that 
$\Vert \zeta-\varpi \Vert \leq \liminf\limits_{k \rightarrow \infty} \Vert \zeta_{n_k}-\varpi_{n_k} \Vert=dist(\Delta,\Omega)$ which implies $\Vert \zeta-\varpi \Vert=dist(\Delta,\Omega)$ which gives $(\overline{\operatorname{con}}(\daleth(\omega_{k})), \overline{\operatorname{con}}(\daleth(\Lambda_{k})))$ is proximinal. Thus, we infer that $(\omega_{k+1},\Lambda_{k+1})$ is proximinal. 

Since $\omega_{2n+2} \subseteq \Lambda_{2n+1}$ and $\Lambda_{2n+2} \subseteq \omega_{2n+1}$, $\max\{\Xi(\omega_{2n+2}),\Xi(\Lambda_{2n+2})\} \leq \max\{\Xi(\Lambda_{2n+1}),\Xi(\omega_{2n+1})\}$. By Definition \ref{definition 2.5}(iii), we get $\mathfrak{O}(\mathfrak{g},\Xi(\omega_{2n+2} \cup \Lambda_{2n+2})) \leq \mathfrak{O}(\mathfrak{g}, \Xi(\omega_{2n+1} \cup \Lambda_{2n+1}))$. Using Definition \ref{definition 2.3} (i), we get $\phi(\Xi(\omega_{2n+2} \cup \Lambda_{2n+2})) \leq \phi(\Xi(\omega_{2n+1} \cup \Lambda_{2n+1}))$. Also, using Definition \ref{definition 2.5} (iii), we have $\mathfrak{O}(\mathfrak{g},\phi(\Xi(\omega_{2n+2} \cup \Lambda_{2n+2}))) \leq \mathfrak{O}(\mathfrak{g},\phi(\Xi(\omega_{2n+1} \cup \Lambda_{2n+1})))$. Since $F$ is nondecreasing, $\varkappa(\mathfrak{O}(\mathfrak{g},\Xi(\omega_{2n+2} \cup \Lambda_{2n+2})),\mathfrak{O}(\mathfrak{g},\phi(\Xi(\omega_{2n+2} \cup \Lambda_{2n+2})))) \leq \varkappa(\mathfrak{O}(\mathfrak{g}, \Xi(\omega_{2n+1} \cup \Lambda_{2n+1})),\mathfrak{O}(\mathfrak{g},\phi(\Xi(\omega_{2n+1} \cup \Lambda_{2n+1}))))$. Also, $\theta$ is strictly increasing function, we have 
\begin{align}\label{equation2.2}
\theta(\varkappa(\mathfrak{O}(\mathfrak{g},\Xi(\omega_{2n+2}& \cup \Lambda_{2n+2})),\mathfrak{O}(\mathfrak{g},\phi(\Xi(\omega_{2n+2} \cup \Lambda_{2n+2}))))) \nonumber\\
 &\leq \theta(\varkappa(\mathfrak{O}(\mathfrak{g}, \Xi(\omega_{2n+1} \cup \Lambda_{2n+1})),\mathfrak{O}(\mathfrak{g},\phi(\Xi(\omega_{2n+1} \cup \Lambda_{2n+1})))))\nonumber\\
&=\theta(\varkappa(\mathfrak{O}(\mathfrak{g}, \max\{\Xi(\omega_{2n+1}), \Xi(\Lambda_{2n+1}\}), \mathfrak{O}(\mathfrak{g},\phi(\max\{\Xi(\omega_{2n+1}), \Xi(\Lambda_{2n+1})\}))))\nonumber \\
&=\theta(\varkappa(\mathfrak{O}(\mathfrak{g}, \max\{\Xi(\overline{\operatorname{con}}(\daleth(\omega_{2n}))), \Xi(\overline{\operatorname{con}}(\daleth (\Lambda_{2n})))\}), \mathfrak{O}(\mathfrak{g},\phi(\max\{\overline{\operatorname{con}}(\Xi(\daleth(\omega_{2n}))),\nonumber \\
& \quad \Xi(\overline{\operatorname{con}}(\daleth(\Lambda_{2n})))\}))))\nonumber \\
&=\theta(\varkappa(\mathfrak{O}(\mathfrak{g}, \max\{\Xi(\daleth(\omega_{2n})), \Xi(\daleth (\Lambda_{2n}))\}), \mathfrak{O}(\mathfrak{g},\phi(\max\{\Xi(\daleth(\omega_{2n})),\Xi(\daleth(\Lambda_{2n}))\}))))\nonumber 
\end{align}
\begin{align}
&\hspace{2.2cm}=\theta(\varkappa(\mathfrak{O}(\mathfrak{g}, \Xi(\daleth(\omega_{2n})\cup \daleth (\Lambda_{2n}))), \mathfrak{O}(\mathfrak{g},\phi(\Xi(\daleth(\omega_{2n}))\cup \daleth(\Lambda_{2n})))))\nonumber\\
&\hspace{2.2cm}\leq \theta(\varkappa(\mathfrak{O}(\mathfrak{g}, \Xi(\omega_{2n}\cup \Lambda_{2n})), \mathfrak{O}(\mathfrak{g},\phi(\Xi(\omega_{2n}\cup \Lambda_{2n})))))-\psi(\theta(\varkappa(\mathfrak{O}(\mathfrak{g}, \Xi(\omega_{2n}\cup \Lambda_{2n})), \nonumber \\
&\hspace{2.2cm} \quad \mathfrak{O}(\mathfrak{g},\phi(\Xi(\omega_{2n}\cup \Lambda_{2n}))))))
\end{align}
As $\{\Xi(\omega_{2n} \cup \Lambda_{2n})\}$ is a sequence of positive real numbers which is bounded below and decreasing, we deduce  that $\{\Xi(\omega_{2n} \cup \Lambda_{2n})\}_{n \in \mathbb{N} \cup \{0\}}$ is convergent. Suppose that $\lim\limits_{n \rightarrow \infty}\Xi(\omega_{2n} \cup \Lambda_{2n})=\xi$. We show that $\xi=0$. If $\xi>0$, letting $n \to \infty$ and taking the upper limit in (\ref{equation2.2}), we obtain
\begin{align*}
\limsup\limits_{n \rightarrow \infty} \theta(\varkappa(\mathfrak{O}(\mathfrak{g},\Xi(\omega_{2n+2}& \cup \Lambda_{2n+2})),\mathfrak{O}(\mathfrak{g},\phi(\Xi(\omega_{2n+2} \cup \Lambda_{2n+2})))))\\& \leq \limsup\limits_{n \rightarrow \infty}\theta(\varkappa(\mathfrak{O}(\mathfrak{g}, \Xi(\omega_{2n}\cup \Lambda_{2n})), \mathfrak{O}(\mathfrak{g},\phi(\Xi(\omega_{2n}\cup \Lambda_{2n})))))\\
& \quad - \liminf\limits_{n \rightarrow \infty}\psi(\theta(\varkappa(\mathfrak{O}(\mathfrak{g}, \Xi(\omega_{2n}\cup \Lambda_{2n})), \mathfrak{O}(\mathfrak{g},\phi(\Xi(\omega_{2n}\cup \Lambda_{2n}))))))
\end{align*}
which gives $\theta(\varkappa(\mathfrak{O}(\mathfrak{g},\xi),\mathfrak{O}(\mathfrak{g},\phi(\xi)))) \leq \theta(\varkappa(\mathfrak{O}(\mathfrak{g},\xi),\mathfrak{O}(\mathfrak{g},\phi(\xi))))-\psi(\theta(\varkappa(\mathfrak{O}(\mathfrak{g},\xi),\mathfrak{O}(\mathfrak{g},\phi(\xi)))))$. This implies that $\psi(\theta(\varkappa(\mathfrak{O}(\mathfrak{g},\xi),\mathfrak{O}(\mathfrak{g},\phi(\xi)))))=0$. Using Definition \ref{definition 2.3}(iii), we deduce that $\theta(\varkappa(\mathfrak{O}(\mathfrak{g},\xi),\\ \mathfrak{O}(\mathfrak{g},\phi(\xi))))=1$. By using Definition \ref{definition 2.1}(ii), we infer that $\varkappa(\mathfrak{O}(\mathfrak{g},\xi),\mathfrak{O}(\mathfrak{g},\phi(\xi)))=0$. Also, $\max\{\mathfrak{O}(\mathfrak{g},\xi),\mathfrak{O}(\mathfrak{g},\phi(\xi))\}\leq \varkappa(\mathfrak{O}(\mathfrak{g},\xi),\mathfrak{O}(\mathfrak{g},\phi(\xi)))$, which implies that $\max\{\mathfrak{O}(\mathfrak{g},\xi),\mathfrak{O}(\mathfrak{g},\phi(\xi))\}=0$. Therefore, $\mathfrak{O}(\mathfrak{g},\xi)=0$ and $\mathfrak{O}(\mathfrak{g},\phi(\xi))=0$ which gives $\xi=0$. Hence, $\lim\limits_{n \to \infty} \Xi(\omega_{2n} \cup \Lambda_{2n})=0$ which gives $\lim\limits_{n \to \infty} \Xi(\omega_{2n})=0$ and $\lim\limits_{n \to \infty} \Xi(\Lambda_{2n})=0$. Let $\omega_{\infty} = \bigcap\limits_{n=0}^{\infty} \omega_{2n}$ and $\Lambda_{\infty} = \bigcap\limits_{n=0}^{\infty} \Lambda_{2n}$. By property (iv) of measure of noncompactness, $(\omega_{\infty}, \Lambda_{\infty})$ is nonempty and compact. Also, $(\omega_{\infty}, \Lambda_{\infty})$ is convex and $\daleth$-invariant with $dist(\omega_{\infty}, \Lambda_{\infty})=dist(\Delta,\Omega)$. Hence, using Theorem \ref{theorem1}, we infer that $\daleth$ possesses a best proximity point. 
\end{proof}
Substituting $\theta(\varrho)=e^{\varrho}$, $\mathfrak{g} \equiv I$,   $\mathfrak{O}(\mathfrak{g},u)=u$, $\phi(\varrho)=\varrho$, $\varkappa(\varrho,\varsigma)=\varrho+\varsigma$ and $\psi(\varrho)=\varrho-\varrho^\alpha$, where $\alpha \in [0,1)$ into the above theorem yields the following result:
\begin{corollary}\label{corollary1}
Let $\mathfrak{X}$ be a Banach space equipped with a measure of noncompactness $\Xi$ containing  a  bounded, nonempty, convex and closed pair $(\Delta,\Omega)$ such that $\Delta_{0}$ is nonempty. Let $\daleth : \Delta \cup \Omega \to \Delta \cup \Omega$ be a cyclic relatively nonexpansive mapping such that for any  proximinal, nonempty, $\daleth$-invariant, convex, bounded and closed pair $(\omega,\Lambda)\subseteq (\Delta,\Omega)$ such that $dist(\omega,\Lambda)=dist(\Delta,\Omega)$ with  $\Xi(\daleth(\omega) \cup \daleth(\Lambda))>0$ and $\Xi(\omega \cup \Lambda)>0$, we have
$$\Xi(\daleth(\omega) \cup \daleth(\Lambda)) \leq \alpha \Xi(\omega \cup \Lambda),$$
where $\alpha \in [0,1)$. Then $\daleth$ possesses a best proximity point.
\end{corollary}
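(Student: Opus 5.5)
The plan is to derive Corollary \ref{corollary1} from Theorem \ref{theorem2.1} with the specific choices $\theta(\varrho)=e^{\varrho}$, $\mathfrak{O}(\mathfrak{g},u)=u$, $\phi(\varrho)=\varrho$, $\varkappa(\varrho,\varsigma)=\varrho+\varsigma$ and $\psi(\varrho)=\varrho-\varrho^{\alpha}$. First I will check that each choice lies in the relevant class.

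\begin{itemize}
\item $e^{\varrho}$ is strictly increasing and continuous, maps $[0,\infty)$ into $[1,\infty)$, and $e^{\varrho_n}\to1$ exactly when $\varrho_n\to0$. So $\theta\in\Theta$.
\item The identity operator satisfies (i)--(v) of Definition \ref{definition 2.5}, and $\phi(\varrho)=\varrho$ lies in $\Phi$.
\item $\varkappa(\varrho,\varsigma)=\varrho+\varsigma$ is nondecreasing, continuous, additive, and dominates $\max\{\varrho,\varsigma\}$ for nonnegative arguments, so $\varkappa\in\mathfrak{F}$.
\item For $\psi$ we have $\psi(1)=0$ and $\psi$ is continuous. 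For $\varrho_n>1$, $\psi(\varrho_n)\to0$ iff $\varrho_n\to1$, because $\varrho-\varrho^{\alpha}$ is increasing and positive on $(1,\infty)$. Indeed, its derivative is $1-\alpha\varrho^{\alpha-1}>0$ there, since $\alpha<1$. For $\alpha=0$, $\psi(\varrho)=\varrho-1$, which clearly works.
\end{itemize}

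With these choices, write $a=\Xi(\daleth(\omega)\cup\daleth(\Lambda))$ and $b=\Xi(\omega\cup\Lambda)$, both positive. The condition of Definition \ref{definition 2.6} becomes
\begin{equation*}
e^{2a}\le e^{2b}-\bigl(e^{2b}-e^{2\alpha b}\bigr)=e^{2\alpha b},
\end{equation*}
which is equivalent to $a\le \alpha b$. This is exactly the hypothesis of Corollary \ref{corollary1}.

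Hence $\daleth$ is a cyclic operator type condensing mapping, and Theorem \ref{theorem2.1} gives a best proximity point.

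The only real point to verify is the membership $\psi\in\Psi$, namely the "only if" direction of condition (iii). Since $\psi$ is continuous and strictly increasing on $[1,\infty)$ with $\psi(1)=0$, $\psi(\varrho_n)\to0$ forces $\varrho_n\to1$. I would also remark that the hypothesis in the corollary is required only when both measures are positive, matching Definition \ref{definition 2.6}.
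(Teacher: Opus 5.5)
Your proposal is correct and is exactly the paper's argument: the paper obtains this corollary by substituting $\theta(\varrho)=e^{\varrho}$, $\mathfrak{O}(\mathfrak{g},u)=u$, $\phi(\varrho)=\varrho$, $\varkappa(\varrho,\varsigma)=\varrho+\varsigma$ and $\psi(\varrho)=\varrho-\varrho^{\alpha}$ into Theorem \ref{theorem2.1}. Your checks that these functions lie in the required classes, and your reduction of the condensing inequality to $e^{2a}\le e^{2\alpha b}$, i.e.\ $a\le \alpha b$, make explicit what the paper leaves to the reader.
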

Substituting $\mathfrak{g}\equiv I$, $\mathfrak{O}(\mathfrak{g},u)=u$ and $\varkappa(\varrho,\varsigma)=\varrho+\varsigma$ into Theorem \ref{theorem2.1} yields the following result:
\begin{corollary}
Let $\mathfrak{X}$ be a Banach space equipped with a measure of noncompactness $\Xi$ containing  a  bounded, nonempty, convex and closed pair $(\Delta,\Omega)$ such that $\Delta_{0}$ is nonempty. Let $\daleth: \Delta \cup \Omega \to \Delta \cup \Omega$ be a cyclic relatively nonexpansive mapping such that for any  proximinal, nonempty, $\daleth$-invariant, convex, bounded and closed pair $(\omega,\Lambda)\subseteq (\Delta,\Omega)$ such that $dist(\omega,\Lambda)=dist(\Delta,\Omega)$ with  $\Xi(\daleth(\omega) \cup \daleth(\Lambda))>0$ and $\Xi(\omega \cup \Lambda)>0$, we have
$$\theta(\Xi(\daleth(\omega) \cup \daleth(\Lambda)+\phi(\Xi(\daleth(\omega) \cup \daleth(\Lambda))) \leq \theta(\Xi(\omega \cup \Lambda)+\phi(\Xi(\omega \cup \Lambda)))-\psi(\theta(\Xi(\omega \cup \Lambda)+\phi(\Xi(\omega \cup \Lambda)))),$$
where $\theta \in \Theta$, $\phi \in \Phi$. Then $\daleth$ possesses a best proximity point.
\end{corollary}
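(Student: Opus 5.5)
This corollary should follow by specializing Theorem \ref{theorem2.1}. Take $\mathfrak{g}\equiv I$ and $\mathfrak{O}(\mathfrak{g},u)=u$, and take $\varkappa(\varrho,\varsigma)=\varrho+\varsigma$. Keep $\theta\in\Theta$, $\phi\in\Phi$ and $\psi\in\Psi$ arbitrary; the statement implicitly requires $\psi\in\Psi$, and I will read it that way. With these choices the inequality in Definition \ref{definition 2.6} becomes, term by term, exactly the displayed hypothesis of the corollary. So $\daleth$ is a cyclic operator type condensing mapping, and Theorem \ref{theorem2.1} gives a best proximity point. The remaining hypotheses (the pair being bounded, closed, convex and nonempty, $\Delta_0\neq\emptyset$, and $\daleth$ being cyclic relatively nonexpansive) carry over verbatim.

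The verification has two parts.

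\textbf{The identity operator lies in $\mathcal{O}$.} I check Definition \ref{definition 2.5} for $\mathfrak{O}(\mathfrak{g},u)=u$:
\begin{itemize}
\item $\mathfrak{O}(\mathfrak{g},0)=0$;
\item $u>0$ implies $\mathfrak{O}(\mathfrak{g},u)>0$;
\item it is monotone;
\item it commutes with $\max$;
\item it passes to limits.
\end{itemize}

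\textbf{Addition lies in $\mathfrak{F}$.} I check Definition \ref{definition 2.4} for $\varkappa(\varrho,\varsigma)=\varrho+\varsigma$. It is continuous and nondecreasing in each variable. For $\varrho,\varsigma\ge 0$ we have $\max\{\varrho,\varsigma\}\le\varrho+\varsigma$. Subadditivity holds with equality, since $(\varrho+\varsigma)+(\zeta+\varpi)=(\varrho+\zeta)+(\varsigma+\varpi)$.

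Once these membership checks are done, substitution is purely formal. The expression $\varkappa(\mathfrak{O}(\mathfrak{g},t),\mathfrak{O}(\mathfrak{g},\phi(t)))$ reduces to $t+\phi(t)$, evaluated at $t=\Xi(\daleth(\omega)\cup\daleth(\Lambda))$ and at $t=\Xi(\omega\cup\Lambda)$.

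There is no real obstacle. The only care needed is a faithful reading of the displayed inequality: its parentheses are unbalanced as printed, and it should be read as $\theta\big(t+\phi(t)\big)$ on each side.
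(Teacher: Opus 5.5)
Your proposal is correct and follows the paper's own argument: the paper obtains this corollary by substituting $\mathfrak{g}\equiv I$, $\mathfrak{O}(\mathfrak{g},u)=u$ and $\varkappa(\varrho,\varsigma)=\varrho+\varsigma$ into Theorem \ref{theorem2.1}. Your membership checks for $\mathcal{O}$ and $\mathfrak{F}$, and your readings of $\psi\in\Psi$ and of the unbalanced parentheses, just spell out what that one-line substitution leaves implicit.
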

We now proceed to establish similar result for noncyclic condensing mappings.
\begin{theorem}\label{theorem2.2}
Let $\mathfrak{X}$ be a strictly convex Banach space equipped with a measure of noncompactness $\Xi$ containing a  bounded, nonempty, convex and closed pair $(\Delta,\Omega)$  such that $\Delta_{0}$ is nonempty. Let $\daleth : \Delta \cup \Omega \to \Delta \cup \Omega$ be a noncyclic relatively nonexpansive mapping that fulfills operator type  condensing mapping property according to Definition \ref{definition 2.6}. Then $\daleth$ possesses a best proximity pair.
\end{theorem}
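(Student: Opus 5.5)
The plan is to follow the proof of Theorem \ref{theorem2.1} step by step, but now with a noncyclic map, and to finish by invoking Theorem \ref{theorem2} instead of Theorem \ref{theorem1}.

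\textbf{Step 1: $\daleth$ preserves the proximinal pair.} Since $\Delta_0\neq\emptyset$, the pair $(\Delta_0,\Omega_0)$ is nonempty, closed, convex and proximinal. Take $\varrho\in\Delta_0$ and $\varsigma\in\Omega_0$ with $\Vert\varrho-\varsigma\Vert=dist(\Delta,\Omega)$. Then $\daleth\varrho\in\Delta$ and $\daleth\varsigma\in\Omega$, and relative nonexpansiveness gives $\Vert\daleth\varrho-\daleth\varsigma\Vert\le dist(\Delta,\Omega)$. Hence $\daleth\varrho\in\Delta_0$ and $\daleth\varsigma\in\Omega_0$, so $\daleth(\Delta_0)\subseteq\Delta_0$ and $\daleth(\Omega_0)\subseteq\Omega_0$.

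\textbf{Step 2: the iterated hulls.} Set $\omega_0=\Delta_0$, $\Lambda_0=\Omega_0$, and define
$$\omega_n=\overline{\operatorname{con}}(\daleth(\omega_{n-1})),\qquad \Lambda_n=\overline{\operatorname{con}}(\daleth(\Lambda_{n-1})).$$
In the noncyclic case monotonicity is direct, with no need to pass to even indices. Since $\omega_1\subseteq\overline{\operatorname{con}}(\Delta_0)=\omega_0$, induction gives $\omega_{n+1}\subseteq\omega_n$, and likewise $\Lambda_{n+1}\subseteq\Lambda_n$.

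From $\daleth(\omega_n)\subseteq\daleth(\omega_{n-1})\subseteq\omega_n$ each pair $(\omega_n,\Lambda_n)$ is $\daleth$-invariant. Its distance equals $dist(\Delta,\Omega)$, because $\Vert\daleth^n\varrho-\daleth^n\varsigma\Vert\le dist(\Delta,\Omega)$ for a proximal pair $(\varrho,\varsigma)$.

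Proximinality of $(\omega_n,\Lambda_n)$ is proved by the same induction as before. Convex combinations $\sum\mu_j\daleth\vartheta_j$ are matched with $\sum\mu_j\daleth\nu_j$, which handles $\operatorname{con}$. Passing to the closure needs a compactness or limiting argument. I would mirror the earlier step, using weak compactness in the same way as the paper does, or alternatively observe that later only the limiting compact pair matters.

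\textbf{Step 3: the measure of noncompactness tends to zero.} Apply Definition \ref{definition 2.6} to $(\omega_n,\Lambda_n)$ in the case where both measures are positive. Using $\Xi(\overline{\operatorname{con}}\,\cdot)=\Xi(\cdot)$ and the max property, $\Xi(\omega_{n+1}\cup\Lambda_{n+1})=\Xi(\daleth(\omega_n)\cup\daleth(\Lambda_n))$. Then, exactly as in (\ref{equation2.2}),
$$\theta(\varkappa(\mathfrak O(\mathfrak g,s_{n+1}),\mathfrak O(\mathfrak g,\phi(s_{n+1}))))\le \theta(\varkappa(\mathfrak O(\mathfrak g,s_n),\mathfrak O(\mathfrak g,\phi(s_n))))-\psi(\cdots),$$
where $s_n=\Xi(\omega_n\cup\Lambda_n)$. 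The sequence $s_n$ is nonincreasing, so it converges to some $\xi\ge 0$.

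If some $s_n=0$ we are done immediately; if a right-hand side measure vanishes, then $s_{n+1}=0$. Otherwise pass to the limit using continuity of $\theta,\varkappa,\phi,\psi$ and property (v) of $\mathfrak O$. This yields $\psi(\theta(\cdots))=0$, hence $\theta(\cdots)=1$ and $\varkappa(\cdots)=0$. Since $\max\le\varkappa$, we get $\mathfrak O(\mathfrak g,\xi)=0$, and therefore $\xi=0$ by property (ii) of $\mathfrak O$.

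\textbf{Step 4: passing to the limit pair.} Property (iv) of the MNC now makes $\omega_\infty=\bigcap\omega_n$ and $\Lambda_\infty=\bigcap\Lambda_n$ nonempty and compact; both are convex and $\daleth$-invariant. We must also check that $dist(\omega_\infty,\Lambda_\infty)=dist(\Delta,\Omega)$. For this, pick $x_n\in\omega_n$ and $y_n\in\Lambda_n$ with $\Vert x_n-y_n\Vert=dist(\Delta,\Omega)$. Since $\Xi(\omega_n\cup\Lambda_n)\to 0$, the union $\bigcup_n(\omega_n\cup\Lambda_n)$ restricted to tails is relatively compact, so subsequences converge to points of $\omega_\infty$ and $\Lambda_\infty$ at distance $dist(\Delta,\Omega)$.

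Applying Theorem \ref{theorem2} to the restriction of $\daleth$ to $\omega_\infty\cup\Lambda_\infty$, in the strictly convex space $\mathfrak X$, gives $\xi\in\omega_\infty$ and $\zeta\in\Lambda_\infty$ with $\daleth\xi=\xi$, $\daleth\zeta=\zeta$ and $\Vert\xi-\zeta\Vert=dist(\Delta,\Omega)$.

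The main obstacle is the closure step in proximinality, together with preserving the distance in the intersection. I expect to handle both through the compactness coming from $\Xi\to 0$ rather than through weak compactness.
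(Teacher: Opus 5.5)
Your architecture is the same as the paper's: decreasing closed convex hulls $(\omega_n,\Lambda_n)$, the condensing inequality forcing $\Xi(\omega_n\cup\Lambda_n)\to 0$, a Cantor intersection, then Theorem \ref{theorem2}. Steps 3 and 4 are sound once proximinality is available. Your handling of the zero-measure cases and your check of $dist(\omega_\infty,\Lambda_\infty)=dist(\Delta,\Omega)$ are more careful than the paper's. Phrase the latter as $\Xi(\{x_n\}_{n\ge 1})=\Xi(\{x_n\}_{n\ge N})\le \Xi(\omega_N)\to 0$; the tail unions $\omega_N\cup\Lambda_N$ themselves are not relatively compact.

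The genuine gap is the closure step in the proximinality of $(\omega_{k+1},\Lambda_{k+1})$, and neither of your two routes closes it. Weak compactness is not available. $(\Delta,\Omega)$ is only bounded, closed and convex, and strict convexity does not imply reflexivity (e.g.\ $\ell_1$ with the equivalent norm $\Vert x\Vert_1+\Vert x\Vert_2$). The paper's proof of Theorem \ref{theorem2.1}, which the proof of Theorem \ref{theorem2.2} simply reuses, has the same defect. The route through $\Xi\to 0$ is circular. Definition \ref{definition 2.6} can be applied to $(\omega_n,\Lambda_n)$ only once that pair is known to be proximinal, and it is precisely that inequality, for every $n$, that yields $\Xi(\omega_n\cup\Lambda_n)\to 0$. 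So Step 3 currently rests on an unverified hypothesis.

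The missing idea is to use strict convexity already here, not only through Theorem \ref{theorem2}. Claim: if $x,x'\in\Delta$, $y,y'\in\Omega$ and $\Vert x-y\Vert=\Vert x'-y'\Vert=dist(\Delta,\Omega)=:d$, then $x-y=x'-y'$. For $d>0$ and $x-y\neq x'-y'$, strict convexity (with $\zeta=0$, $\beta=d$) gives $\Vert \frac{x+x'}{2}-\frac{y+y'}{2}\Vert<d$. This is impossible, since $\frac{x+x'}{2}\in\Delta$ and $\frac{y+y'}{2}\in\Omega$. For $d=0$ the claim is trivial. Hence there is a fixed vector $v$ such that the unique partner in $\Omega$ of any $x\in\Delta_0$ is $x-v$.

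In the closure step the partners are therefore forced to be $\varpi_n=\zeta_n-v$. These converge to $\zeta-v\in\overline{\operatorname{con}}(\daleth(\Lambda_k))$, and $\Vert\zeta-(\zeta-v)\Vert=d$; the argument for points of $\Lambda_{k+1}$ is symmetric. The same observation gives two further things:
\begin{enumerate}[(i)]
\item the closedness of $\Delta_0$ and $\Omega_0$, which you assert in Step 1 without proof and which is not automatic in a general Banach space;
\item $\Lambda_n=\omega_n-v$ for every $n$, hence $\Lambda_\infty=\omega_\infty-v$, a one-line alternative for the distance claim in Step 4.
\end{enumerate}
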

\begin{proof}
As done in Theorem \ref{theorem2.1}, define $\omega_{n}=\overline{\operatorname{con}}(\daleth(\omega_{n-1}))$ and
$\Lambda_{n}=\overline{\operatorname{con}}(\daleth(\Lambda_{n-1}))$ for all $n\in\mathbb{N}$. Adopting the technique used in the proof of Theorem \ref{theorem2.1}, we establish that $\{(\omega_{n},\Lambda_{n})\}$ constitutes a decreasing sequence of  proximinal, nonempty, closed, $\daleth$-invariant and convex pairs such that $dist(\omega_{n},\Lambda_{n})=dist(\Delta,\Omega)$ for all $n\in\mathbb{N}\cup\{0\}$. Consider 
\begin{align}\label{equation2.3}
\theta(\varkappa(\mathfrak{O}(\mathfrak{g}, \Xi(&\omega_{n+1} \cup \Lambda_{n+1})),\mathfrak{O}(\mathfrak{g},\phi(\Xi(\omega_{n+1} \cup \Lambda_{n+1})))))\nonumber\\
&=\theta(\varkappa(\mathfrak{O}(\mathfrak{g}, \max\{\Xi(\omega_{n+1}), \Xi(\Lambda_{n+1}\}), \mathfrak{O}(\mathfrak{g},\phi(\max\{\Xi(\omega_{n+1}), \Xi(\Lambda_{n+1})\}))))\nonumber \\
&=\theta(\varkappa(\mathfrak{O}(\mathfrak{g}, \max\{\Xi(\overline{\operatorname{con}}(\daleth(\omega_{n}))), \Xi(\overline{\operatorname{con}}(\daleth (\Lambda_{n})))\}), \mathfrak{O}(\mathfrak{g},\phi(\max\{\overline{\operatorname{con}}(\Xi(\daleth(\omega_{n}))),\nonumber \\
& \quad \Xi(\overline{\operatorname{con}}(\daleth(\Lambda_{n})))\}))))\nonumber \\
&=\theta(\varkappa(\mathfrak{O}(\mathfrak{g}, \max\{\Xi(\daleth(\omega_{n})), \Xi(\daleth (\Lambda_{n}))\}), \mathfrak{O}(\mathfrak{g},\phi(\max\{\Xi(\daleth(\omega_{n})),\Xi(\daleth(\Lambda_{n}))\}))))\nonumber\\
&=\theta(\varkappa(\mathfrak{O}(\mathfrak{g}, \Xi(\daleth(\omega_{n})\cup \daleth (\Lambda_{n}))), \mathfrak{O}(\mathfrak{g},\phi(\Xi(\daleth(\omega_{n}))\cup \daleth(\Lambda_{n})))))\nonumber\\
&\leq \theta(\varkappa(\mathfrak{O}(\mathfrak{g}, \Xi(\omega_{n}\cup \Lambda_{n})), \mathfrak{O}(\mathfrak{g},\phi(\Xi(\omega_{n}\cup \Lambda_{n})))))-\psi(\theta(\varkappa(\mathfrak{O}(\mathfrak{g}, \Xi(\omega_{n}\cup \Lambda_{n})), \nonumber \\
& \quad \mathfrak{O}(\mathfrak{g},\phi(\Xi(\omega_{n}\cup \Lambda_{n}))))))
\end{align}
As $\{\Xi(\omega_{n} \cup \Lambda_{n})\}$ is a sequence of positive real numbers which is bounded below and decreasing, we deduce  that $\{\phi(\Upsilon(\omega_{n} \cup \Lambda_{n}))\}_{n \in \mathbb{N} \cup \{0\}}$ is convergent. Suppose that $\lim\limits_{n \rightarrow \infty}\Xi(\omega_{n} \cup \Lambda_{n})=\upsilon$. We show that $\upsilon=0$. If $\upsilon>0$,  taking upper limit in (\ref{equation2.3}) as $n \rightarrow \infty$, we get
\begin{align*}
\limsup\limits_{n \rightarrow \infty} \theta(\varkappa(\mathfrak{O}(\mathfrak{g},\Xi(\omega_{n+1}& \cup \Lambda_{n+1})),\mathfrak{O}(\mathfrak{g},\phi(\Xi(\omega_{n+1} \cup \Lambda_{n+1})))))\\& \leq \limsup\limits_{n \rightarrow \infty}\theta(\varkappa(\mathfrak{O}(\mathfrak{g}, \Xi(\omega_{n}\cup \Lambda_{n})), \mathfrak{O}(\mathfrak{g},\phi(\Xi(\omega_{n}\cup \Lambda_{n})))))\\
& \quad - \liminf\limits_{n \rightarrow \infty}\psi(\theta(\varkappa(\mathfrak{O}(\mathfrak{g}, \Xi(\omega_{n}\cup \Lambda_{n})), \mathfrak{O}(\mathfrak{g},\phi(\Xi(\omega_{n}\cup \Lambda_{n}))))))
\end{align*}
which gives $\theta(\varkappa(\mathfrak{O}(\mathfrak{g},\upsilon),\mathfrak{O}(\mathfrak{g},\phi(\upsilon)))) \leq \theta(\varkappa(\mathfrak{O}(\mathfrak{g},\upsilon),\mathfrak{O}(\mathfrak{g},\phi(\upsilon))))-\psi(\theta(\varkappa(\mathfrak{O}(\mathfrak{g},\upsilon),\mathfrak{O}(\mathfrak{g},\phi(\upsilon)))))$. This implies that $\psi(\theta(\varkappa(\mathfrak{O}(\mathfrak{g},\upsilon),\mathfrak{O}(\mathfrak{g},\phi(\upsilon)))))=0$. Using Definition \ref{definition 2.3}(iii), we deduce that $\theta(\varkappa(\mathfrak{O}(\mathfrak{g},\\ \upsilon),\mathfrak{O}(\mathfrak{g},\phi(\upsilon))))=1$. By using Definition \ref{definition 2.1}(ii), we infer that $\varkappa(\mathfrak{O}(\mathfrak{g},\upsilon),\mathfrak{O}(\mathfrak{g},\phi(\upsilon)))=0$. Also, $\max\{\mathfrak{O}(\mathfrak{g},\upsilon),\mathfrak{O}(\mathfrak{g},\phi(\upsilon))\}\leq \varkappa(\mathfrak{O}(\mathfrak{g},\upsilon),\mathfrak{O}(\mathfrak{g},\phi(\upsilon)))$, which implies that $\max\{\mathfrak{O}(\mathfrak{g},\upsilon),\mathfrak{O}(\mathfrak{g},\phi(\upsilon))\}=0$. Therefore, $\mathfrak{O}(\mathfrak{g},\upsilon)=0$ and $\mathfrak{O}(\mathfrak{g},\phi(\upsilon))=0$ which gives $\upsilon=0$. Hence, $\lim\limits_{n \to \infty} \Xi(\omega_{n} \cup \Lambda_{n})=0$ which gives $\lim\limits_{n \to \infty} \Xi(\omega_{n})=0$ and $\lim\limits_{n \to \infty} \Xi(\Lambda_{n})=0$. Let $\omega_{\infty} = \bigcap\limits_{n=0}^{\infty} \omega_{n}$ and $\Lambda_{\infty} = \bigcap\limits_{n=0}^{\infty} \Lambda_{n}$. By property (iv) of measure of noncompactness, $(\omega_{\infty}, \Lambda_{\infty})$ is nonempty and compact. We also see that $(\omega_{\infty}, \Lambda_{\infty})$ is  $\daleth$-invariant and convex such that  $dist(\omega_{\infty}, \Lambda_{\infty})=dist(\Delta,\Omega)$. Consequently, using Theorem \ref{theorem2}, we establish that $\daleth$ possesses a best proximity pair. 
\end{proof}
\section{Application}
Infinite system of  differential equations feature prominently in several areas of nonlinear analysis. For instance, infinite system of differential equations often emerge when methods based on the numerical analysis are employed to solve partial differential equations. A wide range of real-world applications can be addressed through the concept of infinite system of differential equations. This section is dedicated to the analysis of an infinite system of second order differential equations subject to boundary conditions, with the objective of establishing the existence of its  optimal solution under appropriate assumptions. 
 
Let $(\mathfrak{X},\Vert . \Vert_{\mathfrak{X}})$ be a Banach space.  We write $ \mathcal{C}(I,\mathfrak{X})$ for the Banach space consisting of all continuous functions from the interval $I=[0,\tau]$ to $\mathfrak{X}$ equipped with the supremum norm $\Vert x \Vert=\sup \limits_{\varpi\in I} \Vert x(\varpi) \Vert_{\mathfrak{X}}$. We denote by $\mathcal{C}(I,\mathbb{R})$ the space of all continuous real functions defined on the interval $I$ and $\mathcal{C}^2(I,\mathbb{R})$  represents the class of functions possessing continuous second derivatives on $I$. Throughout this section, we examine the following infinite system of second order differential equations  
\begin{equation}\label{equation3.1}
\begin{aligned}
\frac{d^2\hbar_i}{d\varpi^2}&=-\varphi_i(\varpi,\hbar_1,\hbar_2,\hbar_3,\ldots), \quad \hbar_i(0)=\hbar_i(\tau)=\rho_i, \quad t \in [0,\tau],\\
\frac{d^2\ell_i}{d\varpi^2}&=-\chi_i(\varpi,\ell_1,\ell_2,\ell_3,\ldots), \quad \ell_i(0)=\ell_i(\tau)=\lambda_i, \quad t \in [0,\tau],
\end{aligned}
\end{equation}
where $\rho_i, \lambda_i \in \mathbb{R}$ and $\varphi_i(\varpi,\hbar), \chi_i(\varpi,\ell) \in \mathcal{C}(I,\mathbb{R})$.  Then $(\hbar,\ell) \in \mathcal{C}^2(I,\mathbb{R}) \times \mathcal{C}^2(I,\mathbb{R})$ is a solution of (\ref{equation3.1}) if and only if $(\hbar,\ell) \in \mathcal{C}(I,\mathbb{R}) \times \mathcal{C}(I,\mathbb{R})$ is a solution of the infinite system of integral equations 
\begin{equation}\label{equation3.3}
\begin{aligned}
\hbar_i(\varpi)&=\rho_i+\int_0^\tau \wp(\varpi,\varrho)\varphi_i(\varrho,\hbar(\varrho))d\varrho,\\
\ell_i(\varpi)&=\lambda_i+\int_0^\tau \wp(\varpi,\varrho)\chi_i(\varrho,\ell(\varrho))d\varrho,
\end{aligned}
\end{equation}
where $\wp(\varpi,\varrho)=\left\{\begin{array}{ll} \frac{\varpi}{\tau}(\tau-\varrho), & 0\leq \varpi \leq \varrho \leq \tau,\\
\frac{\varrho}{\tau}(\tau-\varpi), & 0\leq \varrho \leq \varpi \leq \tau \end{array} \right.$ is the Green's function associated with (\ref{equation3.1}).

We investigate the system (\ref{equation3.1}) in Banach sequence space $c_0$ and $l_1$. The following results are instrumental in proving the results of this section:
\begin{theorem}\label{thm1}
\cite{n5} Let $\mathfrak{X}$ be a Banach space, $I=[a,b]$ and $\mathfrak{K} \subseteq \mathcal{C}(I,\mathfrak{X})$ be a bounded and equicontinuous set. Then $\Upsilon(\mathfrak{K})=\sup\limits_{\varpi \in I}\Upsilon(\{x(\varpi):x \in \mathfrak{K}\}).$
\end{theorem}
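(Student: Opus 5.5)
The plan is to prove the two inequalities $\sup_{\varpi\in I}\Upsilon(\mathfrak{K}(\varpi))\le \Upsilon(\mathfrak{K})$ and $\Upsilon(\mathfrak{K})\le \sup_{\varpi\in I}\Upsilon(\mathfrak{K}(\varpi))$ separately. Here $\mathfrak{K}(\varpi)=\{x(\varpi):x\in\mathfrak{K}\}$, and $\Upsilon$ is always the ball-covering (Hausdorff) measure, in $\mathfrak{X}$ or in $\mathcal{C}(I,\mathfrak{X})$ as appropriate. Both sets are bounded, since $\mathfrak{K}$ is bounded in the supremum norm, so all quantities are finite.

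\textbf{The easy inequality.} Fix $\varpi\in I$. The evaluation map $x\mapsto x(\varpi)$ from $\mathcal{C}(I,\mathfrak{X})$ to $\mathfrak{X}$ is $1$-Lipschitz, because $\Vert x(\varpi)-z(\varpi)\Vert_{\mathfrak{X}}\le \Vert x-z\Vert$. Suppose $\mathfrak{K}$ is covered by finitely many balls $B(c_k,r)$ in $\mathcal{C}(I,\mathfrak{X})$. Then $\mathfrak{K}(\varpi)$ is covered by the balls $B(c_k(\varpi),r)$ in $\mathfrak{X}$. Hence $\Upsilon(\mathfrak{K}(\varpi))\le\Upsilon(\mathfrak{K})$, and we take the supremum over $\varpi$.

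\textbf{The reverse inequality.} Put $s=\sup_{\varpi}\Upsilon(\mathfrak{K}(\varpi))$ and fix $\varepsilon>0$.
\begin{enumerate}[(a)]
\item Since $I$ is compact, equicontinuity of $\mathfrak{K}$ upgrades to uniform equicontinuity. So there is $\delta>0$ with $\Vert x(\varpi)-x(\varpi')\Vert_{\mathfrak{X}}<\varepsilon$ for all $x\in\mathfrak{K}$ whenever $|\varpi-\varpi'|<\delta$. Choose nodes $a=t_0<t_1<\dots<t_m=b$ with mesh less than $\delta$.
\item For each node $t_j$, cover $\mathfrak{K}(t_j)$ by finitely many balls $B(y_{j,k},s+\varepsilon)$, $k=1,\dots,N_j$, in $\mathfrak{X}$.
\item For each multi-index $\alpha=(\alpha_0,\dots,\alpha_m)$ with $1\le\alpha_j\le N_j$ (finitely many $\alpha$ in total), define the center $c_\alpha\in\mathcal{C}(I,\mathfrak{X})$ as the piecewise linear interpolation with $c_\alpha(t_j)=y_{j,\alpha_j}$. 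Every $x\in\mathfrak{K}$ lies in some class $\{x:\Vert x(t_j)-y_{j,\alpha_j}\Vert\le s+\varepsilon \ \text{for all } j\}$.
\item Take such $x$ and $\varpi\in[t_j,t_{j+1}]$, and write $c_\alpha(\varpi)=\lambda y_{j,\alpha_j}+(1-\lambda)y_{j+1,\alpha_{j+1}}$. By convexity of the norm,
\[
\Vert x(\varpi)-c_\alpha(\varpi)\Vert_{\mathfrak{X}}\le \lambda\Vert x(\varpi)-y_{j,\alpha_j}\Vert_{\mathfrak{X}}+(1-\lambda)\Vert x(\varpi)-y_{j+1,\alpha_{j+1}}\Vert_{\mathfrak{X}}.
\]
Each term on the right is at most $\varepsilon+(s+\varepsilon)$, using $\Vert x(\varpi)-x(t_j)\Vert_{\mathfrak{X}}<\varepsilon$ and the analogous bound at $t_{j+1}$.
\end{enumerate}
Hence $\mathfrak{K}\subseteq\bigcup_\alpha B(c_\alpha,s+2\varepsilon)$, so $\Upsilon(\mathfrak{K})\le s+2\varepsilon$. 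Letting $\varepsilon\to0$ gives the reverse inequality.

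\textbf{Main obstacle.} The delicate point is step (c): how to choose the centers in $\mathcal{C}(I,\mathfrak{X})$. The naive choice is a representative function from each class. That only yields radius $2(s+\varepsilon)$, because two members of a class are within $2(s+\varepsilon)$ of each other at the nodes, and so it loses a factor of $2$. The piecewise linear interpolation of the ball centers is what avoids this loss. It relies only on convexity of balls in $\mathfrak{X}$ and on continuity of the interpolant, and it is where equicontinuity enters, controlling $x$ between the nodes.
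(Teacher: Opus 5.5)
The paper does not prove this statement. It quotes it from \cite{n5} and uses it only as a tool in Section 3, so there is no internal argument to compare against. Your proof is correct and complete.

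The easy inequality follows because evaluation $x\mapsto x(\varpi)$ is $1$-Lipschitz from $\mathcal{C}(I,\mathfrak{X})$ to $\mathfrak{X}$. For the reverse inequality:
\begin{enumerate}[(i)]
\item Compactness of $I$ upgrades pointwise equicontinuity to uniform equicontinuity by a Lebesgue-number argument, which deserves one line in a write-up.
\item The interpolants $c_\alpha$ are genuine elements of $\mathcal{C}(I,\mathfrak{X})$, and there are finitely many of them.
\item Your convexity estimate gives $\Vert x(\varpi)-c_\alpha(\varpi)\Vert_{\mathfrak{X}}<s+2\varepsilon$ for every $\varpi$.
\end{enumerate}
One small point: a supremum of strict pointwise inequalities only yields $\Vert x-c_\alpha\Vert\le s+2\varepsilon$, so your cover uses closed balls. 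This is harmless, since the definition of $\Upsilon$ only requires radii $\le\delta$.

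Your remark on the main obstacle is the substantive part. The classical argument for the Kuratowski measure $\pi$ (Ambrosetti's lemma) needs no centers. It partitions each $\mathfrak{K}(t_j)$ into finitely many sets of small diameter and groups functions by index tuples; each group then has sup-norm diameter at most that diameter plus $2\varepsilon$.

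Transferring that result to $\Upsilon$ through $\Upsilon\le\pi\le 2\Upsilon$ only gives $\Upsilon(\mathfrak{K})\le 2\sup_{\varpi}\Upsilon(\mathfrak{K}(\varpi))$. Taking class representatives as centers gives the same factor-$2$ bound. Because the Hausdorff measure requires centers in $\mathcal{C}(I,\mathfrak{X})$, interpolating the ball centers is exactly what recovers the sharp constant $1$.
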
 
Recall that in the sequence space $(c_0,\Vert . \Vert_{c_0})$, for a bounded subset $\mathcal{B}$ of $c_0$, the Hausdorff measure of noncompactness $\Upsilon$  is given by  
\begin{align}\label{usec0}
\Upsilon(B)=\lim_{n\rightarrow \infty} \Big\{\sup_{(\omega_i)\in B} \Big(\max_{i\geq n}\vert \omega_i\vert \Big)\Big\}.
\end{align}
Also, in the sequence space $(l_1,\Vert . \Vert_{l_1})$, for a bounded subset $\mathcal{B}$ of $c_0$, the Hausdorff measure of noncompactness $\Upsilon$ is given by
\begin{align}\label{usel1}
\Upsilon(B)=\lim_{n\rightarrow \infty} \Big\{\sup_{(\omega_i)\in B} \Big(\sum_{i=n}^\infty \vert \omega_i\vert \Big)\Big\}.
\end{align}
\subsection{Solution in $c_0$ space}
Let $\rho=(\rho_1,\rho_2,\rho_3,\ldots), \lambda=(\lambda_1,\lambda_2,\lambda_3,\ldots) \in c_0$. Let $\mathfrak{U_1}=\overline{\mathcal{B}}(\rho,r) \subseteq c_0$ and $\mathfrak{U_2}=\overline{\mathcal{B}}(\lambda,r) \subseteq c_0$. Let
$$\mathfrak{M_1}=\mathcal{C}(I,\mathfrak{U_1})=\{\hbar \in \mathcal{C}(I,c_0) \mbox{ such that }\hbar:I \rightarrow \mathfrak{U_1}, \hbar_i(0)=\hbar_i(T)=\rho_i\},$$
$$\mathfrak{M_1}=\mathcal{C}(I,\mathfrak{U_2})=\{\hbar \in \mathcal{C}(I,c_0) \mbox{ such that }\hbar:I \rightarrow \mathfrak{U_2}, \hbar_i(0)=\hbar_i(T)=\lambda_i\}.$$ Then $(\mathfrak{M_1},\mathfrak{M_2})$ is a  bounded, nonempty, closed and  convex pair in $\mathcal{C}(I,c_0)$ with $dist(\mathfrak{M_1},\mathfrak{M_2})=\Vert \rho-\lambda \Vert_{c_0}$.
Define $\aleph:\mathfrak{M_1} \cup \mathfrak{M_2} \rightarrow \mathfrak{M_1} \cup \mathfrak{M_2}$ by 
\begin{equation}\label{equation3.2}
\begin{aligned}
(\aleph \hbar)(\varpi)=\{(\aleph \hbar)_i(\varpi)\}=\left\{\left\{\begin{array}{ll} \lambda_i+\int_0^\tau \wp(\varpi,\varrho)\varphi_i(\varrho,\hbar(\varrho))d\varrho, & \hbar \in \mathfrak{M_1},\\
\rho_i+\int_0^\tau \wp(\varpi,\varrho)\chi_i(\varrho,\hbar(\varrho))d\varrho, & \hbar \in \mathfrak{M_2}. \end{array} \right\}\right., \quad \varpi \in I
\end{aligned}
\end{equation}
We say that $\upsilon \in \mathfrak{M_1} \cup \mathfrak{M_2}$ is an optimum solution of (\ref{equation3.1}) if the mapping given by (\ref{equation3.2}) possess a best proximity point $\upsilon$, that is, $\Vert \aleph \upsilon-\upsilon \Vert=dist(\mathfrak{M_1},\mathfrak{M_2})$. 

We establish our results under the following assumptions: 

($\mathfrak{A}_1$) The functions $\varphi(\varpi,\hbar(\varpi))=(\varphi_1(\varpi,\hbar(\varpi)),\varphi_2(\varpi,\hbar(\varpi)),\varphi_3(\varpi,\hbar(\varpi)),\ldots)$ and $\chi(\varpi,\hbar(\varpi))=(\chi_1(\varpi, \hbar(\varpi)), \chi_2(\varpi,\hbar(\varpi)),\chi_3(\varpi,\hbar(\varpi)),\ldots)$ are defined on $I \times c_0$ to $c_0$ and each $\varphi_i$ and $\chi_i$ are real valued functions such that 
$$\Vert \varphi(\varpi,\hbar(\varpi))-\chi(\varpi,\ell(\varpi)) \Vert_{c_0} \leq \frac{8}{\tau^2}[ \Vert \hbar(\varpi)-\ell(\varpi) \Vert_{c_0}-\Vert \lambda-\rho \Vert_{c_0}].$$

($\mathfrak{A}_2$) $\varphi(.,\hbar)$ and $\chi(.,\hbar)$ are measurable for each fixed $\hbar$.

($\mathfrak{A}_3$)  For each $\varpi \in I$, $\hbar(\varpi)=(\hbar_i(\varpi)) \in c_0$ and for each $i \in \mathbb{N}$, we have
\begin{align*}
\vert \varphi_i(\varpi,\hbar(\varpi))\vert &\leq p_i(\varpi)+q_i(\varpi)\sup \{\vert \hbar_j\vert:j\geq i\},\\
\vert \chi_i(\varpi,\hbar(\varpi))\vert &\leq k_i(\varpi)+l_i(\varpi)\sup \{\vert \hbar_j\vert:j\geq i\},
\end{align*}
where $p_i(\varpi)$, $q_i(\varpi)$, $k_i(\varpi)$ and $l_i(\varpi)$ are continuous functions on $I$ such that the sequences $p_i(\varpi)$ and $k_i(\varpi)$ converges uniformly on $I$ to a function identically vanishing on $I$ ad the sequences $(q_i(\varpi))$ and $(l_i(\varpi))$ are equibounded on $I$.

We adopt the following notation:
\begin{align*}
P&=\sup \{p_i(\varpi):\varpi \in I, i \in \mathbb{N}\},\\
q(t)&=\sup \{q_i(\varpi):i \in \mathbb{N}\},\\
Q&=\sup \{q(\varpi):\varpi \in I\},\\
K&=\sup \{k_i(\varpi):\varpi \in I, i \in \mathbb{N}\},\\
l(t)&=\sup \{l_i(\varpi):i \in \mathbb{N}\},\\
L&=\sup \{l(\varpi):\varpi \in I\}.
\end{align*}
\begin{theorem}
Along with the assumptions $(\mathfrak{A}_1)$-$(\mathfrak{A}_3)$, $\frac{Q\tau^2}{8}<1$ and $\frac{L\tau^2}{8}<1$, the infinite system of second order differential equations (\ref{equation3.1}) has an optimum solution.
\end{theorem}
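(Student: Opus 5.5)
The plan is to apply Corollary \ref{corollary1}, i.e.\ Theorem \ref{theorem2.1} with the linear choices of $\theta,\phi,\psi,\varkappa,\mathfrak{O}$, to the operator $\aleph$ defined in (\ref{equation3.2}) on the pair $(\mathfrak{M_1},\mathfrak{M_2})$ in $\mathcal{C}(I,c_0)$. We take $\Xi$ to be the Hausdorff measure $\Upsilon$ on $\mathcal{C}(I,c_0)$. A best proximity point of $\aleph$ is by definition an optimum solution of (\ref{equation3.1}), so it suffices to verify the hypotheses of the corollary. Three of them are already available from the text: $(\mathfrak{M_1},\mathfrak{M_2})$ is bounded, closed and convex; $dist(\mathfrak{M_1},\mathfrak{M_2})=\Vert\rho-\lambda\Vert_{c_0}$; and $(\mathfrak{M_1})_0\neq\emptyset$, witnessed by the constant functions $\rho$ and $\lambda$.

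\textbf{Step 1 (cyclicity and continuity).} For $\hbar\in\mathfrak{M_1}$, the Green's function vanishes at $\varpi=0,\tau$, so $(\aleph\hbar)_i(0)=(\aleph\hbar)_i(\tau)=\lambda_i$. Using $(\mathfrak{A}_3)$ together with $\int_0^\tau\wp(\varpi,\varrho)d\varrho\le\tau^2/8$, we get
\[
\Vert(\aleph\hbar)(\varpi)-\lambda\Vert_{c_0}\le \tfrac{\tau^2}{8}\bigl(P+Q(\Vert\rho\Vert_{c_0}+r)\bigr).
\]
We choose $r$ large enough that this is $\le r$; this is possible because $Q\tau^2/8<1$, and symmetrically $L\tau^2/8<1$ handles $\aleph(\mathfrak{M_2})\subseteq\mathfrak{M_1}$. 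Since $p_i\to0$ uniformly and $\sup_{j\ge i}|\hbar_j|\to0$, each image lies in $c_0$. Continuity of $\aleph$ follows from continuity of the $\varphi_i,\chi_i$ via dominated convergence, using $(\mathfrak{A}_2)$.

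\textbf{Step 2 (relative nonexpansiveness).} Take $\hbar\in\mathfrak{M_1}$ and $\ell\in\mathfrak{M_2}$. By $(\mathfrak{A}_1)$,
\[
\Vert\aleph\hbar(\varpi)-\aleph\ell(\varpi)\Vert\le\Vert\lambda-\rho\Vert+\tfrac{\tau^2}{8}\cdot\tfrac{8}{\tau^2}\bigl(\Vert\hbar-\ell\Vert-\Vert\lambda-\rho\Vert\bigr)=\Vert\hbar-\ell\Vert .
\]
Taking the supremum over $\varpi$ gives $\Vert\aleph\hbar-\aleph\ell\Vert\le\Vert\hbar-\ell\Vert$.

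\textbf{Step 3 (the condensing estimate).} This is the main obstacle. For an admissible pair $(\omega,\Lambda)$, the set $\aleph(\omega)\cup\aleph(\Lambda)$ is bounded and equicontinuous, because $\wp$ is uniformly continuous and the integrands are uniformly bounded. Hence Theorem \ref{thm1} gives
\[
\Upsilon(\aleph(\omega)\cup\aleph(\Lambda))=\sup_{\varpi}\Upsilon\bigl(\{\aleph\hbar(\varpi)\}\bigr).
\]
We then apply formula (\ref{usec0}). Using $(\mathfrak{A}_3)$, for $i\ge n$,
\[
|(\aleph\hbar)_i(\varpi)|\le|\lambda_i|+\tfrac{\tau^2}{8}\Bigl(\sup_{\varpi,i\ge n}p_i+Q\sup_{\varrho}\sup_{j\ge n}|\hbar_j(\varrho)|\Bigr).
\]
Letting $n\to\infty$, and using $\lambda,\rho\in c_0$ and $p_i,k_i\to0$ uniformly, yields
\[
\Upsilon(\aleph(\omega)\cup\aleph(\Lambda))\le\tfrac{\tau^2}{8}\max\{Q,L\}\,\sup_{\varrho}\Upsilon(\{\hbar(\varrho):\hbar\in\omega\cup\Lambda\}).
\]
The delicate point is the final comparison: the right-hand side must be bounded by $\Upsilon(\omega\cup\Lambda)$. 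For arbitrary non-equicontinuous sets only the inequality $\sup_\varrho\Upsilon(\{\hbar(\varrho)\})\le\Upsilon(\omega\cup\Lambda)$ is available, but that is exactly the direction needed, since evaluation at $\varrho$ is $1$-Lipschitz. We therefore obtain
\[
\Upsilon(\aleph(\omega)\cup\aleph(\Lambda))\le\alpha\,\Upsilon(\omega\cup\Lambda),\qquad \alpha=\tfrac{\tau^2}{8}\max\{Q,L\}<1.
\]

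With Steps 1--3 in hand, Corollary \ref{corollary1} applies and gives a best proximity point of $\aleph$, which is the desired optimum solution. One caveat in Step 3: the bound on $|\hbar_j|$ in $(\mathfrak{A}_3)$ refers to the absolute entries, not to deviations from $\rho$. This is harmless for the tail measure, because the tails of $\rho$ and $\lambda$ vanish.
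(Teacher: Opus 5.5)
Your proof follows the paper's route. You use the same cyclic pair $(\mathfrak{M_1},\mathfrak{M_2})$ and choose the radius from $Q\tau^2/8<1$ and $L\tau^2/8<1$. You read off relative nonexpansiveness from $(\mathfrak{A}_1)$ together with $\int_0^\tau\wp(\varpi,\varrho)\,d\varrho\le\tau^2/8$. The condensing estimate comes from Theorem \ref{thm1} and formula (\ref{usec0}), and Corollary \ref{corollary1} finishes. You are more careful than the paper in two places. First, you exhibit the constant functions $\rho,\lambda$ as witnesses that $(\mathfrak{M_1})_0\neq\emptyset$. Second, you justify $\sup_{\varrho}\Upsilon_{c_0}(B(\varrho))\le\Upsilon_{\mathcal{C}(I,c_0)}(B)$ by the $1$-Lipschitz evaluation map. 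The paper uses that inequality silently for the possibly non-equicontinuous set $B_1$.

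One step in your Step 3 does not follow as written. You replace the integral by $\sup_{\varrho}\sup_{j\ge n}|\hbar_j(\varrho)|$ before letting $n\to\infty$. Set $g_n(\varrho)=\sup_{\hbar\in\omega\cup\Lambda}\sup_{j\ge n}|\hbar_j(\varrho)|$. What you then obtain is $\lim_n\sup_{\varrho}g_n(\varrho)$, whereas the quantity you claim is $\sup_{\varrho}\Upsilon_{c_0}(\{\hbar(\varrho)\})=\sup_{\varrho}\lim_n g_n(\varrho)$. For non-equicontinuous families these can differ. For example, continuous bumps $\beta_k e_k$ of height $1$ with pairwise disjoint supports give $0$ for the second quantity and $1$ for the first.

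Either of two easy repairs works.
\begin{enumerate}
\item Keep the integral, as the paper does. The $g_n$ are bounded, lower semicontinuous (hence measurable), and decrease pointwise to $g(\varrho)=\Upsilon_{c_0}(\{\hbar(\varrho)\})$. Dominated convergence then gives $\lim_n\int_0^\tau\wp(\varpi,\varrho)g_n(\varrho)\,d\varrho=\int_0^\tau\wp(\varpi,\varrho)g(\varrho)\,d\varrho\le\frac{\tau^2}{8}\sup_\varrho g(\varrho)$.
\item Bound $\lim_n\sup_\varrho g_n(\varrho)$ directly by $\Upsilon_{\mathcal{C}(I,c_0)}(\omega\cup\Lambda)$. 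Take a finite $\delta$-net with centres $f_m\in\mathcal{C}(I,c_0)$. Each range $f_m(I)$ is compact in $c_0$, so the tails of the centres vanish uniformly in $\varrho$. Since the tail projection has norm $1$, the limit is at most $\delta$.
\end{enumerate}
With either repair, your conclusion $\Upsilon(\aleph(\omega)\cup\aleph(\Lambda))\le\alpha\,\Upsilon(\omega\cup\Lambda)$ with $\alpha=\frac{\tau^2}{8}\max\{Q,L\}$ stands, and the rest of your argument goes through as in the paper.
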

\begin{proof}
Consider
\begin{align*}
\Vert (\aleph \hbar)(\varpi)-\lambda \Vert _{c_0}&=\sup_{i\geq 1}\vert (\aleph  \hbar)_i(\varpi)-\lambda_i \vert \\
&=\sup_{i \geq 1} \Big\vert \lambda_i+\int_0^\tau \wp(\varpi,\varrho)\varphi_i(\varrho,\hbar(\varrho))d\varrho-\lambda_i \Big\vert\\
& \leq \sup_{i \geq 1} \int_0^\tau \vert \wp(\varpi,\varrho) \vert \vert \varphi_i(\varrho,\hbar(\varrho))\vert d\varrho \\
& \leq \sup_{i \geq 1} \int_0^\tau \vert \wp(\varpi,\varrho) \vert [ p_i(\varrho)+q_i(\varrho)\sup \{\vert u_j(\varrho)\vert:j \geq i\} ]d\varrho\\
& \leq \frac{T^2}{8}[P+Q\Vert u(\varpi) \Vert _{c_0}]. 
\end{align*}
As $\Vert \hbar(\varpi) \Vert _{c_0}\leq  \Vert \hbar(\varpi)-\rho \Vert_{c_0}+\Vert \rho \Vert _{c_0}$, This gives 
$\Vert (\aleph \hbar)(\varpi)-\lambda \Vert _{c_0} < \frac{T^2}{8}[P_1+Q\Vert \hbar(\varpi) -\rho\Vert_{c_0}]$, where $P_1=P+Q\Vert \rho \Vert_{c_0}$. Then $r_1=\frac{T^2P_1}{8-T^2Q}$ is the optimal solution of this inequality
$$\frac{T^2}{8}[P_1+Qr] \leq r.$$
Consider
\begin{align*}
\Vert (\aleph \ell)(\varpi)-\rho \Vert _{c_0}&=\sup_{i \geq 1} \vert (\aleph \ell)_i(\varpi)-\rho_i\vert \\
&=\sup_{i \geq 1} \Big\vert \rho_i+\int_0^\tau \wp(\varpi,\varrho)\chi_i(\varrho,\ell(\varrho))d\varrho-\rho_i \Big\vert\\
& \leq \sup_{i \geq 1} \int_0^\tau \vert \wp(\varpi,\varrho) \vert \vert \chi_i(\varrho,\ell(\varrho))\vert d\varrho \\
& \leq \sup_{i \geq 1} \int_0^\tau \vert \wp(\varpi,\varrho) \vert [ k_i(\varrho)+l_i(\varrho)\sup \{\vert \ell_j(\varrho) \vert:j \geq i\} ]d\varrho\\
& \leq \frac{T^2}{8}[K+L\Vert \ell(\varpi) \Vert_{c_0}].
\end{align*}
As $\Vert \ell(\varpi) \Vert _{c_0}\leq  \Vert \ell(\varpi)-\lambda \Vert_{c_0}+\Vert \lambda \Vert _{c_0}$, This gives 
$\Vert (\aleph \ell)(\varpi)-\rho \Vert_{c_0} < \frac{T^2}{8}[K_1+L\Vert \ell -\lambda \Vert_{c_0}]$, where $K_1=K+L\Vert \lambda \Vert_{c_0}$. Then $r_2=\frac{T^2K_1}{8-T^2L}$ is the optimal solution of this inequality
$$\frac{T^2}{8}[K_1+Lr] \leq r.$$
Let $r=\max\{r_1,r_2\}$. Now, we show that $\aleph$ is a cyclic mapping on $\mathfrak{M_1} \cup \mathfrak{M_2}$.  First, we prove that $\aleph(\mathfrak{M_1}) \subseteq \mathfrak{M_2}$. Let $\hbar \in \mathfrak{M_1}$. Consider 
\begin{align*}
(\aleph \hbar)_i(0)&=\lambda_i+\int_0^\tau \wp(0,\varrho)\varphi_i(\varrho,\hbar(\varrho))d\varrho\\
&=\lambda_i+\int_0^\tau 0.\varphi_i(\varrho,\hbar(\varrho))d\varrho\\
&=\lambda_i
\end{align*} 
and 
\begin{align*}
(\aleph \hbar)_i(\tau)&=\lambda_i+\int_0^\tau \wp(\tau,\varrho)\varphi_i(s,\hbar(s))ds\\
&=\lambda_i+\int_0^\tau 0.\varphi_i(\varrho,\hbar(\varrho))d\varrho\\
&=\lambda_i.
\end{align*}
Also, $\Vert (\aleph \hbar)(\varpi)-\lambda \Vert _{c_0}\leq r$. This implies that $\aleph \hbar \in \mathfrak{M_2}$. Now, we show that $\aleph(\mathfrak{M_2}) \subseteq \mathfrak{M_1}$. Let $\ell \in \mathfrak{M_2}$. Consider 
\begin{align*}
(\aleph \ell)_i(0)&=\rho_i+\int_0^\tau \wp(0,\varrho)\chi_i(\varrho,\ell(\varrho))d\varrho\\
&=\rho_i+\int_0^\tau 0.\chi_i(\varrho,\ell(\varrho))d\varrho\\
&=\rho_i
\end{align*} 
and 
\begin{align*}
(\aleph \ell)_i(\tau)&=\rho_i+\int_0^\tau \wp(\tau,\varrho)\chi_i(\varrho,\ell(\varrho))d\varrho\\
&=\rho_i+\int_0^\tau 0.\chi_i(\varrho,\ell(\varrho))d\varrho\\
&=\rho_i.
\end{align*}
Also, $\Vert (\aleph \ell)(\varpi)-\rho \Vert _{c_0}\leq r$. This implies that $\aleph \ell \in \mathfrak{M_1}$. Therefore, $\aleph$ is a cyclic mapping. Clearly, $\aleph(\mathfrak{M_1})$ and $\aleph(\mathfrak{M_2})$ are  bounded subsets of $\mathcal{C}(I,c_0)$. Now, we show that $\aleph$ is a relatively nonexpansive mapping. Let $\hbar \in \mathfrak{M_1}$ and $\ell \in \mathfrak{M_2}$. Consider 
\begin{align*}
\Vert (\aleph \hbar)(\varpi)-(\aleph \ell)(\varpi) \Vert_{c_0}&=\sup_{i \geq 1} \Big\vert \lambda_i+\int_0^\tau \wp(\varpi,\varrho)\varphi_i(\varrho,\hbar(\varrho))d\varrho-\rho_i-\int_0^\tau \wp(\varpi,\varrho)\chi_i(\varrho,\ell(\varrho))d\varrho \Big\vert \\
& \leq \sup_{i \geq 1} \Big[\vert \lambda_i-\rho_i \vert +\int_0^\tau \vert \wp(\varpi,\varrho)\vert \vert \varphi_i(\varrho,\hbar(\varrho))-\chi_i(\varrho,\ell(\varrho))\vert d\varrho \Big]\\
& \leq \sup_{i \geq 1}\vert \lambda_i-\rho_i \vert + \sup_{i \geq 1} \int_0^\tau \vert \wp(\varpi,\varrho)\vert \vert \varphi_i(\varrho,\hbar(\varrho))-\chi_i(\varrho,\ell(\varrho))\vert d\varrho \\
& \leq \sup_{i \geq 1}\vert \lambda_i-\rho_i \vert +  \int_0^\tau \vert \wp(\varpi,\varrho)\vert \sup_{i \geq 1}\vert \varphi_i(\varrho,\hbar(\varrho))-\chi_i(\varrho,\ell(\varrho))\vert d\varrho \\
& = \sup_{i \geq 1}\vert \lambda_i-\rho_i \vert +  \int_0^\tau \vert \wp(\varpi,s)\vert \Vert \varphi(\varrho,\hbar(\varrho))-\chi(\varrho,\ell(\varrho))\Vert_{c_0} d\varrho \\
&\leq \Vert \lambda-\rho\Vert_{c_0}+\frac{8}{\tau^2}\int_0^\tau \vert \wp(\varpi,\varrho)\vert [\Vert \hbar-\ell \Vert_{\mathcal{C}(I,c_0)}-\Vert \lambda-\rho\Vert_{c_0}]ds\\
&\leq \Vert \hbar-\ell \Vert_{\mathcal{C}(I,c_0)}
\end{align*}
which implies that $\Vert \aleph \hbar-\aleph \ell \Vert_{\mathcal{C}(I,c_0)} \leq \Vert \hbar-\ell \Vert_{\mathcal{C}(I,c_0)}$. 
Now, we show that $\aleph(\mathfrak{M_1})$ is equicontinuous subset of $\mathfrak{M_2}$. Consider 
\begin{align*}
\Vert (\aleph \hbar)(\varpi)-(\aleph \hbar)(\varpi')\Vert_{c_0}&=\sup_{i \geq 1} \Big\vert \lambda_i+\int_0^\tau \wp(\varpi,\varrho)\varphi_i(\varrho,\hbar(\varrho))d\varrho-\lambda_i-\int_0^\tau \wp(\varpi',\varrho)\varphi_i(\varrho,\hbar(\varrho))d\varrho \Big\vert\\
& \leq \sup_{i \geq 1}\int_0^\tau \vert \wp(\varpi,\varrho)-\wp(\varpi',\varrho)\vert \vert \varphi_i(\varrho,\hbar(\varrho))\vert d\varrho\\
& \leq \sup_{i \geq 1}\int_0^\tau \vert \wp(\varpi,\varrho)-\wp(\varpi',\varrho)\vert  [ p_i(\varrho)+q_i(\varrho)\sup \{\vert \hbar_j(\varrho)\vert:j \geq i\} ]d\varrho\\
& \leq \sup_{i \geq 1}\int_0^\tau \vert \wp(\varpi,\varrho)-\wp(\varpi',\varrho)\vert  q_i(\varrho)\sup \{\vert \hbar_j(\varrho)\vert:j \geq i\} d\varrho\\
& \leq Q \int_0^\tau \vert \wp(\varpi,\varrho)-\wp(\varpi',\varrho)\vert \Vert \hbar(\varrho) \Vert_{c_0}ds.
\end{align*}
Since $\wp(\varpi,\varrho)$ is continuous, for given $\epsilon>0$ there exists $\delta(\varpi',\epsilon)>0$ such that $\vert \wp(\varpi,\varrho)-\wp(\varpi',\varrho)\vert<\frac{\epsilon}{QT\Vert \hbar(s) \Vert_{c_0}}$ whenever $\vert \varpi-\varpi'\vert <\delta$. This implies that $\Vert (\aleph \hbar)(\varpi)-(\aleph \hbar)(\varpi')\Vert_{c_0}<\epsilon$ whenever $\vert \varpi-\varpi'\vert <\delta$. Similarly, we show that $\aleph(\mathfrak{M_2})$ is equicontinuous subset of $\mathfrak{M_1}$. We further show that $\aleph$ is condensing mapping. Let $(B_1,B_2) \subseteq (\mathfrak{M_1},\mathfrak{M_2})$ be a proximinal, nonempty, $\aleph$-invariant, convex, closed and bounded pair such that $dist(B_1,B_2)=dist(\mathfrak{M_1},\mathfrak{M_2})$. Consider
$$\Upsilon_{\mathcal{C}(I,c_0)}(\aleph(B_1)\cup \aleph(B_2))=\max\{\Upsilon_{\mathcal{C}(I,c_0)}(\aleph(B_1)),\Upsilon_{\mathcal{C}(I,c_0)}(\aleph(B_2))\}.$$
In view of Theorem \ref{thm1}, we have 
\begin{align*}
\Upsilon_{\mathcal{C}(I,c_0)}(\aleph(B_1))=\sup_{\varpi \in I} \Upsilon_{c_0}((\aleph B_1)(\varpi)).
\end{align*}
Using the relation established in (\ref{usec0}), we obtain 
\begin{align*}
\Upsilon_{c_0}((\aleph B_1)(\varpi))&=\lim_{n \rightarrow \infty} \big(\sup_{\hbar \in B_1}\big\{\max_{i \geq n}\vert (\aleph \hbar)_i(\varpi)\vert \big\} \big)\\
&\leq \lim_{n \rightarrow \infty} \Big(\sup_{\hbar \in B_1}\Big\{\max_{i \geq n}\Big\{\vert \lambda_i\vert +\int_0^\tau \vert \wp(\varpi,\varrho) \vert \vert \varphi_i(\varrho,\hbar(\varrho))\vert d\varrho \Big\}\Big\}\Big)\\
&\leq \lim_{n \rightarrow \infty} \Big(\sup_{\hbar \in B_1}\Big\{\max_{i \geq n}\Big\{\vert \lambda_i\vert +\int_0^\tau \vert \wp(\varpi,\varrho) \vert[ p_i(\varrho)+q_i(\varrho)\sup \{\vert \hbar_j(\varrho) \vert:j \geq i\} ] d\varrho \Big\}\Big\}\Big)\\
&\leq Q \lim_{n \rightarrow \infty} \Big(\sup_{\hbar \in B_1}\Big\{ \max_{i \geq n}\Big\{ \int_0^\tau \vert \wp(\varpi,\varrho) \vert \sup \{\vert \hbar_j(\varrho) \vert:j \geq i\} d\varrho \Big\}\Big\}\Big)
\end{align*}
which gives $$\sup_{\varpi \in I} \Upsilon_{c_0}((\aleph B_1)(\varpi)) \leq \frac{Q\tau^2}{8} \sup_{\varpi \in I}\lim_{n \rightarrow \infty} \Big( \sup_{\hbar \in B_1}\Big\{\max_{i \geq n} \vert \hbar_i(\varpi)\vert \Big\}\Big).$$
Therefore, $\Upsilon_{\mathcal{C}(I,c_0)}(\aleph(B_1)) \leq \frac{Q\tau^2}{8}\Upsilon_{\mathcal{C}(I,c_0)}(B_1)$. Similarly, $\Upsilon_{\mathcal{C}(I,c_0)}(\aleph(B_2)) \leq \frac{L\tau^2}{8}\Upsilon_{\mathcal{C}(I,c_0)}(B_2)$. This gives 
\begin{align*}
\Upsilon_{\mathcal{C}(I,c_0)}(\aleph(B_1)\cup \aleph(B_2))\leq \max\Big\{\frac{Q\tau^2}{8}\Upsilon_{\mathcal{C}(I,c_0)}(B_1),\frac{L\tau^2}{8}\Upsilon_{\mathcal{C}(I,c_0)}(B_2)\Big\}.
\end{align*}
Let $M=\max\{Q,L\}$. This implies that
\begin{align*}
\Upsilon_{\mathcal{C}(I,c_0)}(\aleph(B_1)\cup \aleph(B_2))&\leq \frac{M\tau^2}{8}\max\{\Upsilon_{\mathcal{C}(I,c_0)}(B_1),\Upsilon_{\mathcal{C}(I,c_0)}(B_2)\}\\
&=\frac{M\tau^2}{8}\Upsilon_{\mathcal{C}(I,c_0)}(B_1\cup B_2).
\end{align*}
Since $\frac{M\tau^2}{8}<1$, by applying Corollary \ref{corollary1}, we deduce  that $\aleph$ has a best proximity point, which, in turn, guarantees the existence of an optimal solution for system (\ref{equation3.1}).
\end{proof}
\subsection{Solution in $\ell_1$ space}
Let $\rho=(\rho_1,\rho_2,\rho_3,\ldots), \lambda=(\lambda_1,\lambda_2,\lambda_3,\ldots) \in \ell_1$. Let $\mathfrak{V_1}=\overline{\mathcal{B}}(\rho,\tilde{r}) \subseteq \ell_1$ and $\mathfrak{V_2}=\overline{\mathcal{B}}(\lambda,\tilde{r}) \subseteq \ell_1$. Let
$$\mathfrak{N_1}=\mathcal{C}(I,\mathfrak{V_1})=\{\hbar \in \mathcal{C}(I,\ell_1) \mbox{ such that }\hbar:I \rightarrow \mathfrak{V_1}, \hbar_i(0)=\hbar_i(\tau)=\rho_i\},$$
$$\mathfrak{N_2}=\mathcal{C}(I,\mathfrak{V_2})=\{\hbar \in \mathcal{C}(I,\ell_1) \mbox{ such that }\hbar:I \rightarrow \mathfrak{V_2}, \hbar_i(0)=\hbar_i(\tau)=\lambda_i\}.$$ Then $(\mathfrak{N_1},\mathfrak{N_2})$ is a nonempty, bounded, convex and closed pair in $\mathcal{C}(I,\ell_1)$ with $dist(\mathfrak{N_1},\mathfrak{N_2})=\Vert \rho-\lambda \Vert_{\ell_1}$.
Define $\aleph:\mathfrak{N_1} \cup \mathfrak{N_2} \rightarrow \mathfrak{N_1} \cup \mathfrak{N_2}$ by 
\begin{equation}\label{equation3.4}
\begin{aligned}
(\aleph \hbar)(\varpi)=\{(\aleph \hbar)_i(\varpi)\}=\left\{\left\{\begin{array}{ll} \lambda_i+\int_0^\tau \wp(\varpi,\varrho)\varphi_i(\varrho,\hbar(\varrho))d\varrho, & \hbar \in \mathfrak{N_1},\\
\rho_i+\int_0^\tau \wp(\varpi,\varrho)\chi_i(\varrho,\hbar(\varrho))d\varrho, & \hbar \in \mathfrak{N_2}. \end{array} \right\}\right., \quad \varpi \in I
\end{aligned}
\end{equation}
We say that $\upsilon \in \mathfrak{N_1} \cup \mathfrak{N_2}$ is an optimum solution of (\ref{equation3.1}) if the mapping given by (\ref{equation3.4}) possess a best proximity point $\upsilon$, that is, $\Vert \aleph\upsilon-\upsilon \Vert=dist(\mathfrak{N_1},\mathfrak{N_2})$. 

We establish our results under the following assumptions: 

($\mathfrak{A}_4$) The functions $\varphi(\varpi,\hbar(\varpi))=(\varphi_1(\varpi,\hbar(\varpi)),\varphi_2(\varpi,\hbar(\varpi)),\varphi_3(\varpi,\hbar(\varpi)),\ldots)$ and $\chi(\varpi,\hbar(\varpi))=(\chi_1(\varpi,\hbar(\varpi)),\\ \chi_2(\varpi,\hbar(\varpi)),\chi_3(\varpi,\hbar(\varpi)),\ldots)$ are defined on $I \times \ell_1$ to $\ell_1$ and each $\varphi_i$ and $\chi_i$ are real valued functions such that 
$$\Vert \varphi(\varpi,\hbar(\varpi))-\chi(\varpi,\ell(\varpi)) \Vert_{c_0} \leq \frac{8}{\tau^2} [\Vert \hbar(\varpi)-\ell(\varpi) \Vert_{\ell_1}-\Vert \lambda-\rho \Vert_{\ell_1}].$$

($\mathfrak{A}_5$) $\varphi(.,\hbar)$ and $\chi(.,\hbar)$ are measurable for each fixed $\hbar$.

($\mathfrak{A}_6$)  For each $\varpi \in I$, $\hbar(\varpi)=(\hbar_i(\varpi)) \in \ell_1$ and for each $i \in \mathbb{N}$, we have
\begin{align*}
\vert \varphi_i(\varpi,\hbar(\varpi))\vert &\leq r_i(\varpi)+d_i(\varpi)\{\vert \hbar_i (\varpi)\vert\},\\
\vert \chi_i(\varpi,\hbar(\varpi))\vert &\leq y_i(\varpi)+z_i(\varpi) \{\vert \hbar_i(\varpi)\vert\},
\end{align*}
where $r_i(\varpi)$, $d_i(\varpi)$, $y_i(\varpi)$ and $z_i(\varpi)$ are continuous functions on $I$ such that the series $\sum\limits_{i\geq1} r_i(\varpi)$ and $\sum\limits_{i\geq1}y_i(\varpi)$ converges uniformly on $I$ and the sequences $(d_i(\varpi))$ and $(z_i(\varpi))$ are equibounded on $I$.

We adopt the following notation:
\begin{align*}
r(\varpi)&=\sum_{i\geq1}r_i(\varpi)  \text{ for each }  \varpi\in I,\\
R&=\sup \{r(\varpi):\varpi \in I\},\\
D&=\sup \{d_i(\varpi):\varpi \in I, i \in \mathbb{N}\},\\
y(t)&=\sum_{i\geq1}y_i(\varpi)  \text{ for each }  \varpi \in I,\\
Y&=\sup \{y(\varpi):\varpi \in I\},\\
Z&=\sup \{z_i(\varpi):\varpi \in I, i \in \mathbb{N}\},\\
\end{align*}
\begin{theorem}
Along with the assumptions $(\mathfrak{A}_4)$-$(\mathfrak{A}_6)$ and $\frac{DT^2}{8}<1$ and $\frac{ZT^2}{8}<1$, the infinite system of second order differential equations (\ref{equation3.1}) has an optimum solution.
\end{theorem}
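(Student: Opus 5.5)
The plan mirrors the $c_0$ case, with the $c_0$ formula for $\Upsilon$ replaced by the $\ell_1$ formula (\ref{usel1}), and concludes with Corollary \ref{corollary1} applied to $\aleph$ on $\mathfrak{N_1}\cup\mathfrak{N_2}\subseteq \mathcal{C}(I,\ell_1)$.

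\textbf{Step 1: choice of radius.} For $\hbar\in\mathfrak{N_1}$, use $\sup_{\varpi}\int_0^\tau|\wp(\varpi,\varrho)|\,d\varrho=\tau^2/8$ and $(\mathfrak{A}_6)$ to bound
\[
\Vert(\aleph\hbar)(\varpi)-\lambda\Vert_{\ell_1}\le\sum_{i\ge1}\int_0^\tau|\wp(\varpi,\varrho)|\bigl(r_i(\varrho)+d_i(\varrho)|\hbar_i(\varrho)|\bigr)d\varrho\le\frac{\tau^2}{8}\bigl(R+D\Vert\hbar(\varrho)\Vert_{\ell_1}\bigr).
\]
The interchange of sum and integral is justified by uniform convergence of $\sum r_i$ and monotone convergence. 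Writing $\Vert\hbar\Vert\le\tilde r+\Vert\rho\Vert_{\ell_1}$, we get $\tilde r_1=\tau^2(R+D\Vert\rho\Vert_{\ell_1})/(8-\tau^2D)$, which is admissible since $D\tau^2/8<1$. Similarly $\tilde r_2=\tau^2(Y+Z\Vert\lambda\Vert_{\ell_1})/(8-\tau^2Z)$, and we take $\tilde r=\max\{\tilde r_1,\tilde r_2\}$.

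\textbf{Step 2: cyclicity and relative nonexpansiveness.} The boundary values follow from $\wp(0,\cdot)=\wp(\tau,\cdot)=0$, so $\aleph(\mathfrak{N_1})\subseteq\mathfrak{N_2}$ and $\aleph(\mathfrak{N_2})\subseteq\mathfrak{N_1}$.

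For relative nonexpansiveness, estimate $\Vert(\aleph\hbar)(\varpi)-(\aleph\ell)(\varpi)\Vert_{\ell_1}$ by
\[
\Vert\lambda-\rho\Vert_{\ell_1}+\int_0^\tau|\wp|\,\Vert\varphi(\varrho,\hbar(\varrho))-\chi(\varrho,\ell(\varrho))\Vert_{\ell_1}d\varrho .
\]
Then $(\mathfrak{A}_4)$ gives the bound $\Vert\hbar-\ell\Vert$. The norm in $(\mathfrak{A}_4)$ is written as $\Vert\cdot\Vert_{c_0}$; I will read it as the $\ell_1$ norm, since that is what the computation needs.

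Continuity of $\aleph$ follows from the same Lipschitz-type estimate combined with dominated convergence. Equicontinuity of $\aleph(\mathfrak{N_1})$ and $\aleph(\mathfrak{N_2})$ follows from the uniform continuity of $\wp$ on $I\times I$ and the uniform bound $R+D(\tilde r+\Vert\rho\Vert_{\ell_1})$ on $\sum_i|\varphi_i|$. Equicontinuity is needed to use Theorem \ref{thm1}.

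\textbf{Step 3: condensing estimate (main obstacle).} Take a proximinal, invariant, closed convex pair $(B_1,B_2)$. By Theorem \ref{thm1}, $\Upsilon(\aleph B_1)=\sup_\varpi\Upsilon_{\ell_1}((\aleph B_1)(\varpi))$, and by (\ref{usel1}),
\[
\Upsilon_{\ell_1}((\aleph B_1)(\varpi))=\lim_{n\to\infty}\sup_{\hbar\in B_1}\sum_{i\ge n}\Bigl|\lambda_i+\int_0^\tau\wp\,\varphi_i\Bigr|.
\]
The two tails $\sum_{i\ge n}|\lambda_i|\to0$ (because $\lambda\in\ell_1$) and $\sup_\varpi\sum_{i\ge n}r_i(\varpi)\to0$ (by uniform convergence) vanish, leaving
\[
\frac{D\tau^2}{8}\lim_n\sup_{\hbar\in B_1}\sup_\varrho\sum_{i\ge n}|\hbar_i(\varrho)|.
\]

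The delicate point is commuting $\sup_\varrho$ with $\lim_n\sup_\hbar$ in order to recognise $\Upsilon(B_1)$. To handle it, I would first replace $B_1$ by the equicontinuous set $\aleph(B_1)$; invariance and convexity allow passing to $\overline{\operatorname{con}}$ of images as in the proof of Theorem \ref{theorem2.1}. Then I would use a finite $\delta$-net in $I$ together with equicontinuity to exchange the limit and the supremum, up to $\epsilon$, and finally apply Theorem \ref{thm1} in reverse.

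This yields $\Upsilon(\aleph B_1)\le\frac{D\tau^2}{8}\Upsilon(B_1)$, and likewise $\Upsilon(\aleph B_2)\le\frac{Z\tau^2}{8}\Upsilon(B_2)$. With $M=\max\{D,Z\}$ we get $\Upsilon(\aleph B_1\cup\aleph B_2)\le\frac{M\tau^2}{8}\Upsilon(B_1\cup B_2)$ and $\frac{M\tau^2}{8}<1$. Corollary \ref{corollary1} then gives a best proximity point of $\aleph$, i.e.\ an optimum solution of (\ref{equation3.1}).
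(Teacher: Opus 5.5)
Your outline follows the paper's proof step for step: the same radius $\tilde r=\max\{\tilde r_1,\tilde r_2\}$, cyclicity from $\wp(0,\cdot)=\wp(\tau,\cdot)=0$, relative nonexpansiveness from $(\mathfrak{A}_4)$, equicontinuity, Theorem~\ref{thm1} with (\ref{usel1}), and finally Corollary~\ref{corollary1}. You also correctly isolate the step the paper passes over: it goes from $\lim_n\sup_{\hbar}\sup_{\varrho}$ to $\sup_{\varpi}\lim_n\sup_{\hbar}$ without comment.

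\textbf{The gap.} Your repair of that step does not establish the hypothesis of Corollary~\ref{corollary1}, which must hold for \emph{every} admissible pair. An admissible $B_1$ is only a closed convex subset of $\mathfrak{N}_1$ with $\aleph(B_2)\subseteq B_1$. Nothing makes it equicontinuous. ``Replacing $B_1$ by $\aleph(B_1)$'' proves a different inequality, one comparing $\Upsilon(\aleph^2B_1)$ with $\Upsilon(\aleph B_1)$. Your $\delta$-net argument is valid, but only for equicontinuous $B_1$. To use it, you would have to stop treating Corollary~\ref{corollary1} as a black box and re-run the proof of Theorem~\ref{theorem2.1}. There you would observe two things: for $n\ge1$ the sets $\omega_{2n},\Lambda_{2n}$ are closed convex hulls of subsets of $\aleph(\mathfrak{N}_2)$ and $\aleph(\mathfrak{N}_1)$, hence equicontinuous; and the step $n=0$ can be discarded.

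\textbf{A simpler fix.} You need neither the interchange nor equicontinuity of $B_1$. Suppose $B_1\subseteq\bigcup_{k=1}^m\overline{\mathcal{B}}(y_k,s)$ with $y_k\in\mathcal{C}(I,\ell_1)$. For each $\hbar\in B_1$, take $k$ with $\Vert\hbar-y_k\Vert\le s$; then
\[
\sup_{\varrho\in I}\sum_{i\ge n}\vert\hbar_i(\varrho)\vert\le s+\max_{1\le k\le m}\,\sup_{\varrho\in I}\sum_{i\ge n}\vert y_{k,i}(\varrho)\vert\qquad\text{for every }\hbar\in B_1 .
\]
The last term tends to $0$ by Dini's theorem. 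Each $\varrho\mapsto\sum_{i\ge n}\vert y_{k,i}(\varrho)\vert$ is continuous on the compact interval $I$ and decreases pointwise to $0$ as $n\to\infty$.

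Hence $\lim_n\sup_{\hbar\in B_1}\sup_{\varrho}\sum_{i\ge n}\vert\hbar_i(\varrho)\vert\le\Upsilon(B_1)$ for every bounded $B_1$. Combined with your tail estimate, this gives $\Upsilon(\aleph B_1)\le\frac{D\tau^2}{8}\Upsilon(B_1)$ for all admissible pairs. Equicontinuity is needed only for $\aleph(B_1)$, to apply Theorem~\ref{thm1} on the left-hand side. Corollary~\ref{corollary1} then applies exactly as stated.

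The continuity remark in your Step~2 can be dropped, since Corollary~\ref{corollary1} does not use it.
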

\begin{proof}
Consider
\begin{align*}
\Vert (\aleph \hbar)(\varpi)-\lambda \Vert _{\ell_1}&=\sum_{i=1}^{\infty}\vert (\aleph \hbar)_i(\varpi)-\lambda_i \vert \\
&=\sum_{i=1}^{\infty} \Big\vert \lambda_i+\int_0^\tau \wp(\varpi,\varrho)\varphi_i(\varrho,\hbar(\varrho))d\varrho-\lambda_i \Big\vert\\
& \leq \sum_{i=1}^{\infty}\int_0^\tau \vert \wp(\varpi,\varrho) \vert \vert \varphi_i(\varrho,\hbar(\varrho))\vert d\varrho \\
& \leq \sum_{i=1}^{\infty}\int_0^\tau \vert \wp(\varpi,\varrho) \vert [ r_i(\varrho)+d_i(\varrho)\vert \hbar_i(\varrho)\vert ]d\varrho\\
& \leq \int_0^\tau \sum_{i=1}^{\infty}   \vert \wp(\varpi,\varrho)\vert   r_i(\varrho)ds+\int_0^\tau\sum_{i=1}^{\infty}   \vert \wp(\varpi,\varrho) \vert d_i(\varrho)\vert \hbar_i(\varrho)\vert d\varrho\\
& \leq \frac{\tau^2}{8}[R+D\Vert \hbar(\varpi) \Vert _{\ell_1}]. 
\end{align*}
As $\Vert \hbar(\varpi) \Vert _{\ell_1}\leq  \Vert \hbar(\varpi)-\rho \Vert_{\ell_1}+\Vert \rho \Vert _{\ell_1}$, This gives 
$\Vert (\aleph \hbar)(\varpi)-\lambda \Vert _{\ell_1} < \frac{T^2}{8}[R_1+D\Vert \hbar(\varpi) -\rho \Vert_{\ell_1}]$, where $R_1=R+D\Vert \rho \Vert_{\ell_1}$. Then $r_3=\frac{\tau^2R_1}{8-\tau^2D}$ is the optimal solution of this inequality
$$\frac{\tau^2}{8}[R_1+Dr] \leq r.$$
Consider
\begin{align*}
\Vert (\aleph \ell)(\varpi)-\rho \Vert _{\ell_1}&=\sum_{i=1}^{\infty}\vert (\aleph \ell)_i(\varpi)-\rho_i \vert \\
&=\sum_{i=1}^{\infty} \Big\vert \rho_i+\int_0^\tau \wp(\varpi,\varrho)\chi_i(\varrho,\ell(\varrho))d\varrho-\rho_i \Big\vert\\
& \leq \sum_{i=1}^{\infty}\int_0^\tau \vert \wp(\varpi,\varrho) \vert \vert \chi_i(\varrho,\ell(\varrho))\vert d\varrho \\
& \leq \sum_{i=1}^{\infty}\int_0^\tau \vert \wp(\varpi,\varrho) \vert [ y_i(\varrho)+z_i(\varrho)\{\vert \ell_i(\varrho)\vert\} ]d\varrho\\
& \leq \int_0^\tau  \sum_{i=1}^{\infty}  \vert \wp(\varpi,\varrho)\vert   y_i(\varrho)d\varrho+\int_0^\tau \sum_{i=1}^{\infty}  \vert \wp(\varpi,\varrho) \vert z_i(\varrho)\vert \ell_i(\varrho)\vert d\varrho\\
& \leq \frac{\tau^2}{8}[Y+Z\Vert \ell(\varpi) \Vert_{\ell_1}]. 
\end{align*}
As $\Vert \ell(\varpi) \Vert _{\ell_1}\leq  \Vert \ell(\varpi)-\lambda \Vert_{\ell_1}+\Vert \lambda \Vert _{\ell_1}$, This gives 
$\Vert (\aleph \ell)(\varpi)-\rho \Vert_{\ell_1} < \frac{\tau^2}{8}[Y_1+Z\Vert v -\lambda\Vert_{\ell_1}]$, where $Y_1=Y+Z\Vert \lambda \Vert_{\ell_1}$. Then $r_4=\frac{\tau^2Y_1}{8-\tau^2Z}$ is the optimal solution of this inequality
$$\frac{\tau^2}{8}[Y_1+Zr] \leq r.$$
Let $\tilde{r}=\max\{r_3,r_4\}$. Now, we show that $\aleph$ is a cyclic mapping on $\mathfrak{N_1} \cup \mathfrak{N_2}$.  First, we prove that $\aleph(\mathfrak{N_1}) \subseteq \mathfrak{N_2}$. Let $\hbar \in \mathfrak{N_1}$. Consider 
\begin{align*}
(\aleph \hbar)_i(0)&=\lambda_i+\int_0^\tau \wp(0,\varrho)\varphi_i(\varrho,\hbar(\varrho))d\varrho\\
&=\lambda_i+\int_0^\tau 0.\varphi_i(\varrho,\hbar(\varrho))d\varrho\\
&=\lambda_i
\end{align*} 
and 
\begin{align*}
(\aleph \hbar)_i(\tau)&=\lambda_i+\int_0^\tau \wp(\tau,\varrho)\varphi_i(\varrho,\hbar(\varrho))d\varrho\\
&=\lambda_i+\int_0^\tau 0.\varphi_i(\varrho,\hbar(\varrho))d\varrho\\
&=\lambda_i.
\end{align*}
Also, $\Vert (\aleph \hbar)(\varpi)-\lambda \Vert _{\ell_1}\leq r$. This implies that $\aleph \hbar \in \mathfrak{N_2}$. Now, we show that $\aleph(\mathfrak{N_2}) \subseteq \mathfrak{N_1}$. Let $\ell \in \mathfrak{N_2}$. Consider 
\begin{align*}
(\aleph \ell)_i(0)&=\rho_i+\int_0^\tau \wp(0,\varrho)\chi_i(\varrho,\ell(\varrho))d\varrho\\
&=\rho_i+\int_0^\tau 0.\chi_i(\varrho,\ell(\varrho))d\varrho\\
&=\rho_i
\end{align*} 
and 
\begin{align*}
(\aleph \ell)_i(\tau)&=\rho_i+\int_0^\tau \wp(\tau,\varrho)\chi_i(\varrho,\ell(\varrho))d\varrho\\
&=\rho_i+\int_0^\tau 0.\chi_i(\varrho,\ell(\varrho))d\varrho\\
&=\rho_i.
\end{align*}
Also, $\Vert (\aleph \ell)(\varpi)-\rho \Vert _{\ell_1}\leq r$. This implies that $\aleph \ell \in \mathfrak{N_1}$. Therefore, $\aleph$ is a cyclic mapping. Clearly, $\aleph(\mathfrak{N_1})$ and $\aleph(\mathfrak{N_2})$ are  bounded subsets of $\mathcal{C}(I,\ell_1)$. Now, we show that $\aleph$ is a relatively nonexpansive mapping. Let $\hbar \in \mathfrak{N_1}$ and $\ell \in \mathfrak{N_2}$. Consider 
\begin{align*}
\Vert (\aleph \hbar)(\varpi)-(\aleph \ell)(\varpi) \Vert_{\ell_1}&=\sum_{i=1}^{\infty} \Big\vert \lambda_i+\int_0^\tau \wp(\varpi,\varrho)\varphi_i(\varrho,\hbar(\varrho))d\varrho-\rho_i-\int_0^\tau \wp(\varpi,\varrho)\chi_i(\varrho,\ell(\varrho))d\varrho \Big\vert \\
& \leq \sum_{i=1}^{\infty} \Big[\vert \lambda_i-\rho_i \vert +\int_0^\tau \vert \wp(\varpi,\varrho)\vert \vert \varphi_i(\varrho,\hbar(\varrho))-\chi_i(\varrho,\ell(\varrho))\vert d\varrho \Big]\\
& \leq \sum_{i=1}^{\infty}\vert \lambda_i-\rho_i \vert + \sum_{i=1}^{\infty}\int_0^\tau \vert \wp(\varpi,\varrho)\vert \vert \varphi_i(\varrho,\hbar(\varrho))-\chi_i(\varrho,\ell(\varrho))\vert d\varrho \\
& \leq \sum_{i=1}^{\infty}\vert \lambda_i-\rho_i \vert +  \int_0^\tau \vert \wp(\varpi,\varrho)\vert \sum_{i=1}^{\infty}\vert \varphi_i(\varrho,\hbar(\varrho))-\chi_i(\varrho,\ell(\varrho))\vert d\varrho \\
& = \sum_{i=1}^{\infty}\vert \lambda_i-\rho_i \vert +  \int_0^\tau \vert \wp(\varpi,\varrho)\vert \Vert \varphi(\varrho,\hbar(\varrho))-\chi(\varrho,\ell(\varrho))\Vert_{\ell_1} d\varrho \\
&\leq \Vert \lambda-\rho\Vert_{\ell_1}+\frac{8}{\tau^2}\int_0^\tau \vert \wp(\varpi,\varrho)\vert [\Vert \hbar-\ell \Vert_{\mathcal{C}(I,\ell_1)}-\Vert \lambda-\rho\Vert_{\ell_1}]ds\\
&\leq \Vert \hbar-\ell \Vert_{\mathcal{C}(I,\ell_1)}
\end{align*}
which implies that $\Vert \aleph \hbar-\aleph \ell \Vert_{\mathcal{C}(I,\ell_1)} \leq \Vert \hbar-\ell \Vert_{\mathcal{C}(I,\ell_1)}$. 
Now, we show that $\aleph(\mathfrak{N_1})$ is equicontinuous subset of $\mathfrak{N_2}$. Consider 
\begin{align*}
\Vert (\aleph \hbar)(\varpi)-(\aleph \hbar)(\varpi')\Vert_{\ell_1}&=\sum_{i=1}^{\infty} \Big\vert \lambda_i+\int_0^\tau \wp(\varpi,\varrho)\varphi_i(\varrho,\hbar(\varrho))d\varrho-\lambda_i-\int_0^\tau \wp(\varpi',\varrho)\varphi_i(\varrho,\hbar(\varrho))d\varrho \Big\vert\\
& \leq \sum_{i=1}^{\infty}\int_0^\tau \vert \wp(\varpi,\varrho)-\wp(\varpi',\varrho)\vert \vert \varphi_i(\varrho,\hbar(\varrho))\vert d\varrho\\
& \leq \sum_{i=1}^{\infty}\int_0^\tau \vert \wp(\varpi,\varrho)-\wp(\varpi',\varrho)\vert [[r_i(\varrho)+d_i(\varrho)\vert \hbar_i(\varrho)\vert] d\varrho \\
& \leq \sum_{i=1}^{\infty}\int_0^\tau \vert \wp(\varpi,\varrho)-\wp(\varpi',\varrho)\vert   r_i(\varrho)d\varrho+\sum_{i=1}^{\infty}\int_0^\tau \vert \wp(\varpi,\varrho)-\wp(\varpi',\varrho)\vert d_i(\varrho)\vert \hbar_i(\varrho)\vert d\varrho
\end{align*}
\begin{align*}
& \hspace{3cm} \leq \sum_{i=1}^{\infty}\int_0^\tau \vert \wp(\varpi,\varrho)-\wp(\varpi',\varrho)\vert   r_i(\varrho)d\varrho+\sum_{i=1}^{\infty}D \int_0^\tau \vert \wp(\varpi,\varrho)-\wp(\varpi',\varrho)\vert \vert \hbar_i(\varrho)\vert d\varrho\\
&\hspace{3cm}\leq \int_0^\tau \sum_{i=1}^{\infty}\vert \wp(\varpi,\varrho)-\wp(\varpi',\varrho)\vert   r_i(\varrho)d\varrho+D\int_0^\tau \sum_{i=1}^{\infty}\vert \wp(\varpi,\varrho)-\wp(\varpi',\varrho)\vert\vert \hbar_i(\varrho)\vert d\varrho\\
&\hspace{3cm} \leq  R \int_0^\tau \vert \wp(\varpi,\varrho)-\wp(\varpi',\varrho)\vert d\varrho+  D \int_0^\tau \vert \wp(\varpi,\varrho)-\wp(\varpi',\varrho)\vert \Vert \hbar(\varrho) \Vert_{\ell_1} d\varrho.
\end{align*}
Since $\wp(\varpi,\varrho)$ is continuous, for given $\epsilon>0$ there exists $\delta(\varpi',\epsilon)>0$ such that $\vert \wp(\varpi,\varrho)-\wp(\varpi',\varrho)\vert<\frac{\epsilon}{2DT\Vert \hbar(s) \Vert_{\ell_1}}$ and  $\vert \wp(\varpi,\varrho)-\wp(\varpi',\varrho)\vert<\frac{\epsilon}{2R}$  whenever $\vert \varpi-\varpi'\vert <\delta$. This implies that $\Vert (\aleph \hbar)(\varpi)-(\aleph \hbar)(\varpi')\Vert_{\ell_1}<\epsilon$ whenever $\vert \varpi-\varpi'\vert <\delta$. Similarly, we show that $\aleph(\mathfrak{N_2})$ is equicontinuous subset of $\mathfrak{N_1}$. We further show that $F$ is condensing mapping. Let $(B_1,B_2) \subseteq (\mathfrak{N_1},\mathfrak{N_2})$ be a proximinal, nonempty, $\aleph$-invariant, convex, closed and bounded pair such that $dist(B_1,B_2)=dist(\mathfrak{N_1},\mathfrak{N_2})$. Consider
$$\Upsilon_{\mathcal{C}(I,\ell_1)}(\aleph(B_1)\cup \aleph(B_2))=\max\{\Upsilon_{\mathcal{C}(I,\ell_1)}(\aleph(B_1)),\Upsilon_{\mathcal{C}(I,\ell_1)}(\aleph(B_2))\}.$$
In view of Theorem \ref{thm1}, we have 
\begin{align*}
\Upsilon_{\mathcal{C}(I,\ell_1)}(\aleph(B_1))=\sup_{\varpi \in I} \Upsilon_{\ell_1}((\aleph B_1)(\varpi))
\end{align*}
Using the relation established in (\ref{usel1}), we obtain  
\begin{align*}
\Upsilon_{\ell_1}((\aleph B_1)(\varpi))&=\lim_{n \rightarrow \infty} \big(\sup_{\hbar \in B_1}\big\{\sum_{i \geq n}\vert (\aleph \hbar)_i(\varpi)\vert \big\} \big)\\
&\leq \lim_{n \rightarrow \infty} \Big(\sup_{\hbar \in B_1}\Big\{\sum_{i \geq n}\Big(\vert \lambda_i\vert +\int_0^\tau \vert \wp(\varpi,\varrho) \vert \vert \varphi_i(\varrho,\hbar(\varrho))\vert d\varrho\Big)\Big\}\Big)\\
&\leq \lim_{n \rightarrow \infty} \Big(\sup_{\hbar \in B_1}\Big\{\sum_{i \geq n}\Big( \vert \lambda_i\vert +\int_0^\tau \vert \wp(\varpi,\varrho) \vert[ r_i(\varrho)+d_i(\varrho)\vert \hbar_i(\varrho) \vert ] d\varrho\Big)\Big\}\Big)\\
&= \lim_{n \rightarrow \infty} \Big(\sup_{\hbar \in B_1}\Big\{\sum_{i \geq n}\Big( \vert \lambda_i\vert +\int_0^\tau \vert \wp(\varpi,\varrho)\vert  r_i(\varrho)d\varrho+\int_0^\tau \vert \wp(t,\varrho)\vert d_i(\varrho)\vert \hbar_i(\varrho) \vert d\varrho \Big) \Big\}\Big)\\
&\leq \lim_{n \rightarrow \infty} \Big(\sup_{\hbar \in B_1}\Big\{\sum_{i \geq n} \int_0^\tau \vert \wp(t,\varrho)\vert r_i(\varrho)d\varrho+\sum_{i \geq n}\int_0^\tau  \vert \wp(\varpi,\varrho)\vert d_i(\varrho)\vert \hbar_i(\varrho) \vert d\varrho\Big\}\Big)\\
&\leq \lim_{n \rightarrow \infty} \Big(\sup_{\hbar \in B_1}\Big\{\int_0^\tau \vert \wp(\varpi,\varrho) \vert \sum_{i \geq n}  r_i(\varrho)d\varrho+D \int_0^\tau  \vert \wp(\varpi,\varrho)\vert \sum_{i \geq n} \vert \hbar_i(\varrho) \vert d\varrho \Big\}\Big)\\
&\leq D \lim_{n \rightarrow \infty}\Big(\sup_{\hbar \in B_1}\Big\{ \int_0^\tau \vert \wp(\varpi,\varrho) \vert\sum_{i \geq n}\vert \hbar_i(\varrho) \vert d\varrho \Big\}\Big)
\end{align*}
which gives $$\sup_{\varpi \in I} \Upsilon_{\ell_1}((\aleph B_1)(\varpi)) \leq \frac{D\tau^2}{8} \sup_{\varpi \in I} \lim_{n \rightarrow \infty}\Big(\sup_{\hbar \in B_1}\Big\{ \sum_{i \geq n}\vert \hbar_i(\varpi) \vert \Big\} \Big).$$
Therefore, $\Upsilon_{\mathcal{C}(I,\ell_1)}(\aleph(B_1)) \leq \frac{D\tau^2}{8}\Upsilon_{\mathcal{C}(I,\ell_1)}(B_1)$. Similarly, $\Upsilon_{\mathcal{C}(I,\ell_1)}(\aleph(B_2)) \leq \frac{ZT^2}{8}\Upsilon_{\mathcal{C}(I,\ell_1)}(B_2)$. This gives 
\begin{align*}
\Upsilon_{\mathcal{C}(I,\ell_1)}(\aleph(B_1)\cup \aleph(B_2))\leq \max\Big\{\frac{D\tau^2}{8}\Upsilon_{\mathcal{C}(I,\ell_1)}(B_1),\frac{Z\tau^2}{8}\Upsilon_{\mathcal{C}(I,\ell_1)}(B_2)\Big\}.
\end{align*}
Let $M=\max\{D,Z\}$. This implies that
\begin{align*}
\Upsilon_{\mathcal{C}(I,\ell_1)}(\aleph(B_1)\cup \aleph(B_2))&\leq \frac{M\tau^2}{8}\max\{\Upsilon_{\mathcal{C}(I,\ell_1)}(B_1),\Upsilon_{\mathcal{C}(I,\ell_1)}(B_2)\}\\
&=\frac{M\tau^2}{8}\Upsilon_{\mathcal{C}(I,\ell_1)}(B_1\cup B_2).
\end{align*}
Since, $\frac{M\tau^2}{8}<1$, by applying Corollary \ref{corollary1}, we deduce  that $\aleph$ has a best proximity point, which, in turn, guarantees the existence of an optimal solution for system (\ref{equation3.1}).
\end{proof}
\section*{Acknowledgements}
The first author is grateful to the Government of India for providing financial support in the form of Institutional Fellowship.   

\end{document}